\newcommand{\R}{\ensuremath{\mathbb{R}}}
\newcommand{\Z}{\ensuremath{\mathbb{Z}}}
\newcommand{\A}{\ensuremath{\mathbb{A}}}
\newcommand{\C}{\ensuremath{\mathbb{C}}}
\newcommand{\N}{\ensuremath{\mathbb{N}}}
\newcommand{\K}{\ensuremath{k}}
\newcommand{\Ss}{\ensuremath{\mathcal{S}}}
\newcommand{\Ce}{\ensuremath{\mathcal{C}}}
\newcommand{\B}{\ensuremath{\mathfrak{B}}}
\newcommand{\m}{\ensuremath{\mathfrak{m}}}
\newcommand{\n}{\ensuremath{\mathfrak{n}}}
\newcommand{\Fst}{\ensuremath{\mathcal{F}}}
\newcommand{\Oo}{\ensuremath{\mathcal{O}}}
\newcommand{\Larc}{\ensuremath{\mathcal{L}}}
\newcommand{\Aarc}{\ensuremath{\mathcal{A}}}
\newcommand{\Aarce}{\ensuremath{\mathcal{A}_e}}
\DeclareMathOperator{\w}{ord}
\DeclareMathOperator{\pow}{pow}
\DeclareMathOperator{\W}{\textsc{ord}}
\DeclareMathOperator{\Char}{char}
\newcommand{\p}{\ensuremath{\partial}}
\newcommand{\im}{\ensuremath{\textnormal{im}}}
\newcommand{\id}{\ensuremath{\textnormal{id}}}
\newcommand{\In}{\ensuremath{\textnormal{in}}}
\newcommand{\Sing}{\ensuremath{\textnormal{Sing}}}
\newcommand{\sing}{\ensuremath{\textnormal{sing}}}
\DeclareMathOperator{\Spec}{Spec}
\newcommand{\supp}{\ensuremath{\textnormal{supp}}}
\newcommand{\ra}{\ensuremath{\rightarrow}}
\newcommand{\lra}{\ensuremath{\longrightarrow}}
\newtheorem{Thm}{Theorem}[section]
\newtheorem{Prop}[Thm]{Proposition}
\newtheorem{Lem}[Thm]{Lemma}
\theoremstyle{remark}
\newtheorem*{Rem}{Remark}
\newtheorem*{Rems}{Remarks}
\newtheorem*{Ex}{Example}
\newtheorem*{Exs}{Examples}
\newtheorem*{Alg}{Algorithm}
\newtheorem{Constr}{Construction}
\newtheorem*{Constr*}{Construction}
\title[Arcs, Cords and Felts]{\huge Arcs, Cords and Felts\\ {\large -- Six instances of the Linearization Principle}}
\author{Clemens Bruschek}
\address{University of Vienna, Nordbergstr. 15, 1200 Vienna, Austria}
\email{Clemens.Bruschek@univie.ac.at}
\thanks{Partially supported by project P18992 of the Austrian Science Foundation FWF, by the 3rd tranche of the Doctoral Grant of the University of Innsbruck 2006, the Travelling Grant for Students of the University of Innsbruck 2006/2007 and by the Austrian-Spanish cooperation program Acciones Integradas}
\author{Herwig Hauser}
\address{University of Vienna, Nordbergstr. 15, 1200 Vienna, Austria}
\email{Herwig.Hauser@univie.ac.at}
\thanks{}
\date{\today}
\begin{document}

\maketitle
\begin{abstract}
\noindent It is shown how a selection of prominent results in singularity theory and differential geometry can be deduced from {\it one} theorem, the Rank Theorem for maps between spaces of power series.
\end{abstract}

\tableofcontents



\section{Introduction}
Consider maps $f\colon \C[[x]]^p\to \C[[x]]^q$ between Cartesian products of formal or convergent power series rings in $x=(x_1,\ldots, x_n)$. Such an $f$ is called {\it tactile} if it is given by substitution of power series $a(x)=(a_1(x),\ldots, a_p(x))\in \C[[x]]^p$ for the variables $y=(y_1,\ldots,y_p)$ in a power series vector $g(x,y)\in\C[[x,y]]^q$,

$$f\colon a(x)\mapsto g(x,a(x)).$$

Maps of this type -- and their associated zerosets $\{a\in\C[[x]]^p; g(x,a)= 0\}$ which are called {\it felts} in this paper -- are ubiquitous in local analytic geometry: arc spaces, local automorphism groups of varieties, $K$-equivalence, approximation theorems.\\

The main problem in these settings consists almost always in solving tactile equations  in a space of power series. In the finite dimensional situation, i.e., for ana\-lytic maps between affine spaces $\C^p$ and $\C^q$, the first instance for solving is given by the Implicit Function Theorem, or its more general companion, the Rank Theorem. Both results are used locally at points where the zeroset is smooth. In the infinite dimensional situation which we encounter with spaces of power series, the corresponding theorems are much more subtle. At their best, they allow to solve tactile equations {\it from a certain degree on}, meaning that if one knows an approximate power series solution up to sufficiently high degree, then an exact solution exists and its expansion can be determined.\\

The present paper utilizes and extends in a purely algebraic setting a Rank Theorem for tactile maps as it has been established in \cite{HM}: By local automorphisms of source and target, the tactile map can be linearized in the neighbourhood of a chosen power series (cf. Theorem \ref{thm2} below). The assumptions of the Theorem involve a {\it Rank Condition} and an {\it Order Condition}, which both together are sufficient to yield local linearization. The Rank Condition is also necessary, as it is trivially satisfied by any linear map, and preserved by composition with automorphisms. The Order Condition, formulated in terms of standard bases of ideals, ensures that the higher order terms of $f$ are suitably dominated by its linear terms.  It turns out that the linearizing local automorphisms lie beyond the class of tactile maps. They are of a more general type, so called {\it textile} maps. These can be characterized by saying that the coefficients of the output series are  polynomials in the coefficients of the input series. Actually it turns out that by allowing textile maps as automorphisms, the Rank Theorem can be extended to this more general class of maps between power series spaces (see Theorem \ref{thm2} below).\\

Our emphasis in this paper lies on the {\it universality} of the Rank Theorem. We shall show (see section 5) how six important results in singularity theory and differential geometry are direct corollaries or special cases of this theorem. 
Below, we shall briefly describe these applications. For the convergent case, similar reasoning can be applied, invoking the (analogous) Rank Theorem of Hauser-M\"uller for convergent power series spaces \cite{HM}.\\

Denef and Loeser use the following local triviality result as a main
step to set up motivic integration in the context of singular
varieties \cite{loeser} (see also Lemma 9.1 in
\cite{Looijenga_Motivic_Measures}). Let $X\subseteq \A^N_\K$ be an
affine variety over a field $\K$ of characteristic $0$. Assume $X$ is
given by $f_1,\ldots, f_p \in \K[x_1,\ldots, x_N]$. An arc (or more
precisely a $\K$-arc) of $X$ is a $\K[[t]]$-point of $X$, i.e., a
solution to $f_1=\cdots=f_p=0$ in $\K[[t]]^N$. The set of arcs of $X$
is denoted by $X_\infty$. Similarly, an $m$-jet is a solution of these
equations in $\left(\K[t]/(t)^{m+1}\right)^N$. Write $X_m$ for the set
of $m$-jets of $X$. There is a natural projection $\pi_m\colon X_\infty \ra X_m$. Define $\Aarce=X_\infty \setminus \pi_e^{-1}((\Sing X)_e)$ as the space of arcs of $X$ which do not lie in the singular locus up to order $e$. Moreover assume that $X$ is of pure dimension $d$. Denef and Loeser then show that, for sufficiently large $n \in \N$, the map $$\theta_n\colon\pi_{n+1}(X_\infty)\ra \pi_n(X_\infty)$$ is a piecewise trivial fibration over $\pi_n(\Aarce)$ with fibre $\A^d_\K$, \cite{loeser} (2.5). Specializing to the case of a hypersurface this means the following: Let $\gamma$ be an $n$-jet of $X$ such that not all partial derivatives vanish on $\gamma$ modulo $(t)^{e+1}$. Then finding an $(n+1)$-jet $\eta$ of $X$ with $\eta\equiv \gamma\mod (t)^{n+1}$ for $n\geq e$ sufficiently large can be done by solving a linear system of rank $1$. An extension of this result can be found in \cite{reguera}.\\
The triviality of the above fibration $\theta_n$ is one of the key technical results of the paper \cite{loeser}. Extending the result of Denef and Loeser we show that trivialization can already be obtained on the level of arcs, i.e., the map $$X_\infty \ra \pi_n(X_\infty)$$ is a piecewise affine bundle over $\pi_n(\Aarce)$ (therefore all the $\theta_n$ are simultaneously trivialized). The trivializing map will be textile and can be explicitly described (see section \ref{sectionarcspace}).\\

Grinberg, Kazhdan and Drinfeld show in \cite{Grinberg-Kazhdan} (for $\K=\C$) and \cite{drinfeld} (for arbitrary \K) a similar factorization result: Let $X$ be a scheme over $\K$ and $\gamma_0\in X_\infty\setminus (\Sing X)_\infty$. Then the formal neighbourhood $X_\infty[\gamma_0]$ of $X_\infty$ in $\gamma_0$ is a product of the form $Y[y]\times D^\infty$. Here $Y[y]$ is the formal neighbourhood of a scheme of finite type over $\K$ in a point $y\in Y$ and $D^\infty$ is the product of countably many formal schemes $\hbox{Spf}\ \K[[t]]$. Section \ref{sectiondrinfeld} shows that this local factorization theorem follows from the Rank Theorem (for $\Char \K=0$).\\

Denef and Lipshitz in \cite{denef_lipshitz} and Winkel in \cite{winkel} study power series solutions in one variable $x$ with coefficients in $\K$, $\hbox{char}\ \K = 0$, of a system of algebraic differential equations. Here an algebraic differential equation in variables $x$ and $y_1,\ldots, y_n$ is a differential equation which is polynomial in $x,y_1,\ldots, y_n$ and the derivatives of the $y_i$'s. 
Algebraic differential operators in this sense naturally define a textile map $\K[[x]]^n \ra \K[[x]]^n$. Thus it might prove useful to study the techniques of this paper in the framework of algebraic differential equations. A first instance can be found in section \ref{polynomialvectorfields}. There systems of $n$ explicit differential equations, that is systems of the form $y^{(q)}=P(x,y,y^{(1)},\ldots, y^{(q-1)})$, with a vector of polynomials $P=(P_1,\ldots, P_n)$ and $q\in \N$, are considered. Solving such a system in $\K[[x]]^n$ for given initial conditions, i.e., the coefficients of the solutions are given up to order $q-1$, is equivalent to solving an equation of the form $\ell(y)=b$, where $b \in \K[[x]]^n$ and $\ell$ is a linear textile map.\\

Tougeron's Implicit Function Theorem can be seen as a special case of the Rank Theorem. In fact, the proof of the theorem in \cite{tougeron} is based on the following assertion: Let $F\in \K[[x,y]]$, $\K$ a complete valued field of characteristic $0$, and $F(0)=0$. Denote by $\delta=(\delta_1,\ldots, \delta_p)$ the Jacobian of $F$ w.r.t. $y=(y_1,\ldots, y_p)$ evaluated at $(x,0)$, and for $1\leq i \leq r$ set $y^i=(y_1^i,\ldots, y_p^i)$. Then there are $Y^i\in \K[[x,y_j^l;1\leq l\leq r, 1\leq j \leq p]]^p$, $1\leq i\leq r$, such that $$F(x,\sum_{i=1}^r \delta_i Y^i)=F(x,0)+\delta\left(\sum_{i=1}^r\delta_i y^i \right).$$
In section \ref{sectiontougeron} this will be proven by linearization of the map $$(y^1,\ldots,y^r)\mapsto F(x,\sum_{i=1}^r \delta_i y^i) -F(x,0)$$ using the Rank Theorem.\\

Wavrik's Approximation Theorem, a variation of Artin's Approximation Theorem, is based on Tougeron's Implicit Function Theorem, Thm. I$_n$ in \cite{wavrik_general}, and \cite{artin}. Let $\K$ be a complete valued field of characteristic $0$ and let $F\in \K[[x,y]][z]$ be irreducible, $x=(x_1,\ldots,x_n)$, $y=(y_1,\ldots, y_p)$ and $z=(z_1,\ldots, z_r)$. Wavrik proves that for any integer $q>0$ there exists an $N$ such that if $(\bar y,\bar z)\in \K[[x]]^{p+r}$ satisfies $F(x,\bar y,\bar z)\equiv 0 \mod (x)^N$, then there exist series $(z(x),y(x))\in \K[[x]]^{p+r}$ with 
$$F(x,y(x),z(x))=0$$
and 
$$(y(x),z(x))\equiv (\bar y, \bar z) \mod (x)^q.$$ A solution of $F=0$ can be obtained as follows: Set $y_i(x)=\sum_{\alpha\in\N^n} a^i_\alpha x^\alpha$, $z_i(x)=\sum_{\alpha\in\N^n} b^i_\alpha x^\alpha$ and substitute these expressions in $F(x,y,z)$ for $y$ respectively $z$. This gives for each power of $x$ a polynomial equation in the $a_\alpha^i, b^i_\alpha$. The claim then is that if these equations can be solved up to a sufficiently high degree, then the remaining equations also have a solution. Indeed, it is shown in section \ref{sectiontougeron} that solving the remaining equations is equivalent to solving a system of {\it linear} equations. In one single variable $x$, the equivalence follows directly from our Rank Theorem. In several variables, one can use Tougeron's Implicit Function Theorem.\\ 

Lamel and Mir investigate the following question \cite{lamel_mir}: Let $f\colon(\C^n,0)\ra \C^n$ be a germ of a holomorphic map of generic rank $n$, i.e., its Jacobian determinant is a nonzero power series. For germs $u$ of biholomorphic maps (preserving the origin) we consider the map defined by $u \mapsto f\circ u$. In local coordinates this map is given by substitution of power series, hence a tactile map. Here naturally the question for a left inverse to this map arises. Lamel and Mir prove that it is possible to find one if the derivative $u'(0)$ is known, see \cite{lamel_mir} Theorem 2.4 and section \ref{sectionmirlamel} for the precise statement. This result provides information on the biholomorphic solutions of the equation $f(u)=b(x)$, where $b$ is a holomorphic germ. The formal version of this result is in fact a special instance of the Rank Theorem (for the convergent case, apply the same proof to the result in \cite{HM}).\\ 

It should by now have become clear that the proofs of all the above results are based on one common
principle, {\it linearization}: The respective problems are expressed through certain maps between spaces of power series, and the solution of the problem corresponds to locally linearizing the map at a given point. This process is governed by the Rank Theorem. We now describe its precise statement.\\

Let $\K$ be a field of characteristic $0$. A {\it cord} is a sequence $c=(c_\alpha)_{\alpha\in \N^n}$ of constants $c_\alpha$ in $\K$. The local $\K$-algebra of formal power series $\K[[x]]=\K[[x_1,\ldots, x_n]]$ in $n$ variables and coefficients in $\K$ naturally identifies with the space  $\Ce=\Ce_n(\K)$ of cords over $\K$. The maximal ideal of $\K[[x]]$ of power series without constant term is denoted by $\m$. We define $\w c$ as the order of $c$ as a power series. The space $\Ce$ comes equipped with the $\m$-adic topology induced by the $0$-neighborhoods $\m^k$ of series of order $\geq k$. Open sets in the $\m$-adic topology will be referred to as {\it $\m$-open}, though we will drop the ``$\m$'' if the topology is clear out of context. In the case $n=1$ we shall speak of {\it arcs} and write $\Aarc$ for $\Ce$, respectively $\K[[t]]$ for $\K[[x_1]]$.\\

Elements of $\Ce$ are sequences of elements in $\K$ indexed by $\alpha=(\alpha_1,\ldots, \alpha_n)\in \N^n$. We will consider elements of a Cartesian product $\Ce^p$, $p\in \N$, as sequences in $\K$ which are indexed by $(\alpha,\alpha_{n+1}) \in \N^n\times \N$ with $0\leq \alpha_{n+1}\leq p-1$. A map $f\colon \Ce^m \ra \Ce^p; c\mapsto (f_\alpha (c))_{\alpha\in \N^{n+1}}$ is called {\it textile} if for all $\alpha$ the component $f_\alpha(c)$ is a polynomial in the coefficients of $c$. Let $U$ be a subset of $\Ce^m$. A map $f\colon U\ra \Ce^p$ is called textile if it is the restriction of a textile map $\Ce^m \ra \Ce^p$ to $U$. Denote by $\K[\underline{\N}^m_n]$ the polynomial ring $\K[x_\alpha^j ; 1\leq j\leq m, \alpha\in \N^n]$. The {\it felt defined by a textile map $f\colon \Ce^m\ra\Ce^p$} is the closed subscheme $\Spec \K[\underline{\N}^m_n]/(f_\alpha; \alpha \in \N^{n+1})$ of $\Spec \K[\underline{\N}_n^{m}]$. For most applications in section \ref{sec:Applications} the underlying set of $\K$-points $\{c\in \Ce^m;f(c)=0\}$ is of primary interest. It is an ``algebraic'' subset of a countable Cartesian product of copies of $\K$, where the ``equations'' are infinitely many polynomials in a polynomial ring of countably many variables. Sometimes we will use the term ``felt'' also for Zariski-open subsets of some felt defined by a textile map.\\

Any vector of formal power series $g\in \K[[y_1,\ldots, y_m]]^p$
induces a textile map $f\colon \m\cdot \Ce^m \ra \Ce^p$ via substitution $f(c)=g(c)$. Such maps are called {\it tactile}. Their zerosets in case $n=1$ are called  {\it arc spaces}. The tangent map of $f$ at $0$ is given by the linear terms of $g$.\\ 

Textile maps define by restriction morphisms between felts, thus allowing to speak of the {\it category of felts}. A felt $X$ is {\it smooth at a cord} $c$ if there is an $\m$-adic neighbourhood $U$ of $c$ in $\Ce^m$ such that $X\cap U$ is isomorphic to an \m-open subset of a felt defined by a linear textile map. Tangent spaces and maps can be defined in a natural way.\\


\begin{Exs}

1. As a special case of tactiles, let $X$ be an affine algebraic variety defined in $\A^m_\K$ by polynomials $f_1,\ldots, f_p\in \K[x_1,\ldots,  x_m]$. Then 
$$X_\infty=\{a\in\Aarc^m,\, f_1(a)=\ldots =f_p(a)=0\}$$ 
is the classical arc space associated to $X$ (see \cite{nash}). The point $a(0)$ lies in $X$; we say that the arc {\it goes through} or {\it is centered at} $a(0)$. The image $\pi_q(X)$ of $X$ in $(\Aarc/\m^{q+1})^m$ under the canonical projection is the space of $q$-jets which can be lifted to arcs on $X$, whereas 
$$X_q=\{a\in(\Aarc/\m^{q+1})^m,\, f_1(a)\equiv\ldots \equiv f_p(a)\equiv 0  \textnormal { mod }   \m^{q+1}\}$$
 is the {\it space of $q$-jets on $X$}. \\

2. Let $f\in \K[x_1,\ldots, x_m]$ and identify $\Aarc$ with $\K[[t]]$ via $a=(a_i)_{i\in\N} \mapsto \sum a_i t^i$. Consider the felt $Z$ defined by $f$. For an integer $d>0$, decompose $a\in \Aarc^m$ into $a=\bar a+ \hat a$, where $\bar a$ denotes the expansion of $a$ up to degree $d-1$ (its truncation) and $\hat a$ has order $\geq d$. We identify $\bar a$ as an element in $\pi_{d-1}(Z)$ (see example 1) and consider $f(\bar a+\hat a)=0$ as an equation in $\hat a$, say $g(\hat a)=0$. Of course, $g$ induces a tactile map $G\colon\m^d\cdot \Aarc^m \ra \Aarc$. One of the main aspects of this paper will be to show that for $d$ sufficiently large, the map $G$ can be linearized locally (w.r.t. the $\m$-adic topology) in a well defined way (see Theorem \ref{rankthm}). Intuitively, this will signify that the recursion equations resulting from $f(a)=0$ for the coefficients of $a$ will become eventually as good as linear equations. In particular, if $\bar a$ is an approximate solution for some sufficiently high $d$, then the existence of the ``remainder'' $\hat a$ such that $\bar a + \hat a$ is an exact solution is automatically ensured. Geometrically speaking, this says that the fibre over $\bar a$ under the truncation map $b\mapsto \bar b$ is trivial. For a more general triviality result see Theorem \ref{localtrivialitytheorem}.\\

3. Consider a polynomial recursion of order $d$ indexed by the naturals: %
$$a_i=f_i(a_{i-1},\ldots, a_{i-d}),\hskip 1cm i\geq d, $$
with $f_i\in \K[y_1,\ldots,y_d]$. It induces a textile map 
$$F\colon \Aarc \ra \Aarc\colon a\ra (a_0,\ldots,a_{d-1}, a_d-f_d(a),a_{d+1}-f_{d+1}(a),\ldots ).$$
 Prescribing any initial conditions $a_0,\ldots, a_{d-1}$ gives rise
 to a unique solution $(a_i)_{i\in\N}$ of the recursion. As a
 variation, let the polynomial recursion be of infinite order having
 now the form  $a_i=f_i(a_{i-1},\ldots, a_0)$ with polynomials $f_i\in
 \K[y_1,\ldots,y_i]$. Again we get a textile map $F\colon \Aarc \ra \Aarc$, given by $ a\ra (a_i-f_i(a))_{i\in\N}$.\\


\end{Exs}


The article splits into two parts. In sections 2 to 4 we build up the algebraic apparatus to study felts and textile maps; in section 5 we illustrate the impact of the theory by proving directly the before mentioned results on arc spaces, local analytic geometry and Cauchy-Riemann-manifolds.\\

We start with a textile version of the Inverse Mapping Theo\-rem. In contrast to the corresponding theorem in analysis, the result and its proof are mostly algebraic. The basic assumption is that the non-linear terms of the map increase the order of cords stronger than the linear terms do: Write $f=\ell+h$ with $\ell$ the (invertible) tangent map of $f$ at the cord $c$ in question, and $h$ the higher order terms. The hypothesis is then that the composition $\tilde \ell h$ (with $\tilde \ell$ linear s.t. $\tilde \ell \ell=\id$) is {\it contractive}, i.e., increases the order of cords (see section \ref{sectionformalmaps}):\\

The following theorem is a generalization of the inverse function theorem in \cite{HM} to the setting of textile maps.

\begin{Thm}[Inverse Function Theorem] \label{inverseintro}
Let $U\in \Ce^m$ be an $\m$-adic neighbourhood of $0$ and $f\colon U\subseteq \Ce^m \ra \Ce^m$ be textile with $f(0)=0$. Assume that $f=\ell + h$, $f(U)\subseteq W$ (neighbourhood of 0)	, where $\ell\colon U\ra W$ is linear, textile and invertible, i.e., there exists a linear map $\tilde \ell\colon W\ra U$ with $\ell\circ \tilde \ell=\id_W$ and $\tilde \ell \circ \ell=\id_U$. If $\tilde \ell h $ is contractive on $U$, then $f$ admits on $U$ an inverse.
\end{Thm}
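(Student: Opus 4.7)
The plan is to solve $f(x)=y$ by rewriting it as a fixed-point equation and iterating in the $\m$-adic topology. Since $\ell$ is linear and invertible with inverse $\tilde\ell$, the equation $\ell(x)+h(x)=y$ is equivalent to
$$x \;=\; \tilde\ell(y) \,-\, (\tilde\ell\circ h)(x).$$
Define $T_y\colon U\to \Ce^m$ by $T_y(x) := \tilde\ell(y) - (\tilde\ell\circ h)(x)$; a fixed point of $T_y$ is exactly a preimage of $y$ under $f$. I would shrink $U$ and $W$ so that $T_y$ sends $U$ into itself for all $y$ in $W$, which is possible because $\tilde\ell h$ is contractive and therefore pushes cords into arbitrarily small $\m$-adic neighbourhoods of $0$.

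Next I would apply Picard iteration: set $x_0 := 0$ and $x_{k+1} := T_y(x_k)$. Since $x_{k+1}-x_k = -\tilde\ell h(x_k)+\tilde\ell h(x_{k-1})$, contractivity of $\tilde\ell h$ yields $\w(x_{k+1}-x_k) > \w(x_k-x_{k-1})$, so $\w(x_{k+1}-x_k)\to \infty$ and the sequence is Cauchy. Its limit $x_\infty(y)\in \Ce^m$ exists because $\Ce^m$ is $\m$-adically complete. Continuity of $T_y$ (which holds because $\tilde\ell$ is linear textile and $h$ is textile, so each coefficient of the output is a polynomial in finitely many coefficients of the input) lets us pass to the limit, giving $T_y(x_\infty(y))=x_\infty(y)$. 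Uniqueness of the fixed point follows in the standard way from contractivity, so $x_\infty(y)$ is the unique preimage of $y$.

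It then remains to check that the resulting inverse $g\colon y\mapsto x_\infty(y)$ is textile, and that the identities $f\circ g=\id_W$, $g\circ f=\id_U$ hold. The second identity follows from uniqueness applied to $y := f(x)$, since both $x$ and $g(f(x))$ are then preimages of $y$. The main obstacle, and the point that distinguishes the textile statement from the version for tactile maps in \cite{HM}, is textility of $g$: one must observe that contractivity guarantees that for every $N$ the truncation of $x_\infty(y)$ modulo $\m^{N+1}$ already agrees with that of $x_{k_N}$ for some finite $k_N$. Since each Picard iterate $x_k$ is, coefficient by coefficient, a polynomial in the coefficients of $y$ (a finite composition of the linear textile map $\tilde\ell$ and the textile map $h$), each coefficient of $g(y)$ is itself a polynomial in the coefficients of $y$, which is exactly the textility of $g$.
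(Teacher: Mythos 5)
Your proof is correct and takes essentially the same route as the paper: the paper also reduces to $\tilde\ell f = \id + \tilde\ell h$, runs the same Picard/Banach-type iteration $g_{j+1}=\id-\tilde\ell h\circ g_j$ (your $x_k = g_k(\tilde\ell(y))$), derives $\m$-adic convergence from contractivity exactly as you do, and observes textility of the limit because each coefficient stabilizes at a finite iterate. One small point: the shrinking of $U$ and $W$ you propose is unnecessary and, if actually carried out, would prove only that $f$ has an inverse on a smaller neighbourhood rather than on $U$ as the theorem asserts; in fact contractivity of $\tilde\ell h$ (together with $h(0)=0$) already forces $\tilde\ell h(U)\subseteq U$, and for $y\in f(U)$ one has $\tilde\ell(y)=\tilde x+\tilde\ell h(\tilde x)\in U$ for $\tilde x\in U$ with $f(\tilde x)=y$, so $T_y$ maps $U$ into $U$ with no modification needed.
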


The above result will be further generalized to a parametric version (see \ref{sec:Parametric_Rank_Theorem}) and a version for cords with coefficients in a ring with nilpotent elements (see section \ref{sectionformalmaps}).
	
\begin{Ex}
Consider $f\colon \m\ra \m^2$ given by $a\ra t\cdot a+a^2$. Here we
set $\ell(a)=t\cdot a$ and $\tilde \ell(a)=t^{-1}\cdot a$. For
$a\in\m$, $\tilde \ell h (a) =t^{-1}\cdot a^2$ is not contractive (the
order remains constant for $a\in\m\setminus \m^2$), whereas it is
contractive if we restrict to $a\in\m^2$. It follows from the theorem
that the restriction $f|_{\m^2}\colon \m^2\ra \m^3$ is locally invertible at $0$. 
\end{Ex}

In finite dimensional analysis, the Rank Theorem is a trivial consequence of the Inverse Mapping Theorem. In the infinite dimensional context, say Banach spaces or locally convex spaces, and also in the present situation of  cord spaces, this is no longer true. In many applications it is the Rank Theorem which is actually needed (despite the numerous important applications of theorems like the Nash-Moser Inverse Function Theorem, \cite{Hamilton}) to show the manifold structure of fibres of smooth maps or to prove local triviality.

A first step in this respect is the correct definition of the hypothesis of constant rank. We shall follow in this article the concept proposed by Hauser and M\"uller in [HM] for analytic maps between convergent power series spaces. Working with formal instead of convergent series, the definition slightly simplifies, but is nevertheless somewhat unhandy to grasp. It goes as follows:\\

Let $V$ be an $\m$-adic neighbourhood of $0$ in $\Ce$. A textile map
$\gamma\colon V \subseteq \Ce \ra \Ce^m$ is called a {\it curve} (over
$V$) in $\Ce^m$. If $\im(\gamma) \subseteq U\subseteq \Ce^m$ we say
that $\gamma$ defines a curve {\it in} $U$. Let $f\colon U\subseteq \Ce^m \ra \Ce^p$ be a textile map and let $J$ be a closed linear direct complement in $\Ce^p$ of the image $\im(T_{a_0}f)$ of the tangent map of $f$ at a cord $a_0$. Then $f$ has {\it constant rank at $a_0$ with respect to $J$}  if for all curves $\gamma$ in $U$  with $\gamma(0)=a_0$ and all curves $\eta$ in $\Ce^p$ there exist unique curves $\rho$ in $\Ce^m$ and $\tau$ in $J$ such that 
$$\eta = T_\gamma f \cdot \rho + \tau.$$

If $f$ is tactile, say induced by a polynomial $g\in \K[y_1,\ldots, y_m]$, then $T_\gamma f \cdot \rho$ is given by $$\p_1 g(\gamma)\cdot \rho^1 + \cdots + \p_m g (\gamma)\cdot \rho^m.$$
Here, $\p_i g$ is the $i$-th partial derivative of $g$.\\

In the applications the occurring textile map $f$ is often {\it quasi-submersive}, i.e., if $f=\ell + h$ with $\ell=T_{a_0} f$, then $\im(h)\subseteq \im(\ell)$. If $\ell$ is surjective, $f$ is a {\it submersion}. 
\\

Let $\ell\colon \Ce^m \ra \Ce^p$ be textile linear; a textile linear map $\sigma\colon \Ce^p \ra \Ce^m$ satisfying $\ell \sigma \ell = \ell$ is called a  {\it scission of $\ell$}. Note that a scission provides a left inverse on the image of $\ell$ and the kernel of $\ell\sigma$ defines a closed direct complement of this image. We shall construct scissions for the case of tactile maps using the Grauert-Hironaka-Galligo Division Theorem for formal power series (see section \ref{sectionscission1}). After that we give the construction for arbitrary textile linear maps. This will be achieved by a suitable generalization of the Gauss Algorithm to this context.\\


Here is the extension of the Rank Theorem in \cite{HM} which is needed to prove the results in section \ref{sec:Applications}:

\begin{samepage}
\begin{Thm}[Rank Theorem] \label{thm2}
Let $f\colon U \subseteq \Ce^m \ra \Ce^p$ be a textile map on a neighbourhood $U=\m^l\cdot \Ce^m$ of $0$ with $f(0)=0$. Write $f$ in the form $f=\ell + h$ with $\ell=\textnormal{T}_0f$ the tangent map of $f$ at $0$.
Assume that $\ell$ admits a scission $\sigma\colon \Ce^p\ra \Ce^m$ for which $\sigma h$ is contractive on $U$, and that $f$ has constant rank at $0$ with respect to
$\ker(\ell\sigma)$. Then there exist locally invertible textile maps
$u\colon U\ra  \Ce^m$ and $v\colon f(U) \ra \Ce^p$ at $0$ such that
$$v\circ f\circ u^{-1} = \ell.$$
\end{Thm}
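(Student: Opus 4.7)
The plan is to apply Theorem \ref{inverseintro} twice: once to construct a textile change of coordinates $u$ in the source that absorbs the component of $h$ lying along $\im\ell$, and once to construct $v$ in the target that absorbs the component of $h$ lying in $J:=\ker(\ell\sigma)$. The scission $\sigma$ furnishes the splittings $\Ce^p=\im\ell\oplus J$ and $\Ce^m=\im(\sigma\ell)\oplus\ker\ell$; the constant rank hypothesis will enter only for the second step.

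For $u$, set $u(a):=a+\sigma h(a)$. Its linear part is $\id_{\Ce^m}$ (with tautological scission), and its nonlinear part $\sigma h$ is contractive on $U$ by hypothesis, so Theorem \ref{inverseintro} yields a textile inverse $u^{-1}$ defined on an $\m$-adic neighborhood of $0$. Using $\ell\circ u(a)=\ell(a)+\ell\sigma h(a)$ one obtains the decomposition
$$f\circ u^{-1}=\ell+k,\qquad k:=(\id-\ell\sigma)\circ h\circ u^{-1},$$
and since $\id-\ell\sigma$ is a projector onto $J$, the remainder $k$ takes values in $J$.

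Suppose now that one can produce a textile map $K\colon\ell(U)\to J$ with $k=K\circ\ell$. Then the candidate $v(y):=y-K(\ell\sigma(y))$ has linear part $\id_{\Ce^p}$ and contractive nonlinear part, the latter inherited from the contractivity of $\sigma h$ on $U$. A second application of Theorem \ref{inverseintro} therefore makes $v$ locally textile invertible at $0$. Using $\ell\sigma\ell=\ell$ together with $K(\ell(b))\in\ker(\ell\sigma)$, one computes
$$v\bigl(f\circ u^{-1}(b)\bigr)=\ell(b)+K(\ell(b))-K\bigl(\ell\sigma\ell(b)+\ell\sigma K(\ell(b))\bigr)=\ell(b),$$
so that $v\circ f\circ u^{-1}=\ell$, as required.

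The decisive obstacle is producing the factorization $k=K\circ\ell$; this is where the constant rank condition is indispensable. Composition with the textile local isomorphism $u^{-1}$ transports the constant rank of $f$ at $0$ to $f\circ u^{-1}=\ell+k$ at $0$, still with respect to $J$. Since the image of $k$ already lies in $J$ and $J\cap\im\ell=0$, the uniqueness clause of the constant rank condition forces the curve-wise tangent $T_\beta k$ to vanish in all directions tangent to $\ker\ell$, for every cord $\beta$ in a neighborhood of $0$. The remaining work is to integrate this infinitesimal statement in the textile category: the vanishing of $T_\beta k$ along $\ker\ell$ must be promoted to the assertion that $k$ is constant along the cosets of $\ker\ell$, so that the assignment $\ell(b)\mapsto k(b)$ descends unambiguously to the desired textile map $K$ on $\ell(U)$. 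The well-definedness relies again on the uniqueness part of the constant rank hypothesis, while the textile (i.e.\ coefficient-polynomial) nature of $K$ follows because $k$ is textile and $\ell$ admits the textile left inverse $\sigma$ on its image. Once $K$ has been produced, the verification above completes the proof.
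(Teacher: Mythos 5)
Your proposal matches the paper's proof in all essentials: the source-side correction $u=\id+\sigma h$ is identical, the constant-rank hypothesis is used in exactly the same way (to show that $f\circ u^{-1}$ is constant along cosets of $\ker\ell$, via vanishing of the tangent map on $\ker\ell$ followed by the same ``integration'' step the paper also leaves implicit), and your $v(y)=y-K(\ell\sigma(y))$ unwinds to the paper's explicit formula $v=\id-(\id-\ell\sigma)f\sigma\ell\sigma$ once one observes $K=k\circ\sigma$ on $\im\ell$. The only presentational difference is that you interpose the factorization $k=K\circ\ell$ where the paper works directly with the closed-form $v$ and checks $vf=\ell$ from the identity $f=f\sigma\ell$; this is a repackaging, not a different route.
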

\end{samepage}

This theorem will be extended in sections \ref{sec:Parametric_Rank_Theorem} and \ref{sec:RankTheorem_for_TestRings} to the relative case with parameters and a version for cords with coefficients in a ring with nilpotent elements. In the case of quasi-submersions the order condition gives a sufficient criterion for linearization (see section \ref{sec:Rank_Theorem_Without_Parameters}; and sections \ref{sectiondrinfeld} and \ref{sectiontougeron} for applications). We continue with our list of examples:


\begin{Exs}
5. Consider the tactile map induced by an element $f\in \K[x_1,\ldots, x_m]$.  Let $a$ be an arc of $X$ with $a(0)$ a smooth point of $X=V(f)\subseteq \A_\K^m$. Using the classical Implicit Function Theorem for power series or Theorem \ref{thm2} it's easy to see that $a$ is a smooth cord of $X_\infty$. In section \ref{sectionhypersurfacecase} we treat the more interesting case when $a$ is an arc through a point in the singular locus $\Sing(X)$, but does not lie entirely in it, i.e., $a(0) \in \Sing(X)$ but $a \not \in \Sing(X)_\infty$.\\
6. Consider example 2 above in the special case $f= x + xy \in \K[x,y]$. The induced textile map $F\colon \m\cdot \Aarc^2 \ra \Aarc; a\mapsto f(a)$ has constant rank at $0$ with respect to the complement of $(t)\cdot \Aarc$. The composition $\sigma h$ is contractive, where $h(a^1, a^2)=a^1a^2$ and $\sigma$ is a scission for the linear map $\ell\colon (a^1,a^2)\mapsto a^1$. Thus the Rank Theorem applies and $F$ can be linearized at $0$. The linearizing maps $u$, $v$ of the Rank Theorem are given by $v=\id$ and $u(a)=(a^1(1+a^2),a^2)$. Indeed, this $u$ would be the natural candidate for linearization when viewing $f$ in the form $f=x(1+y)$. Note: It is due to the very simple form of the linear part of $f$ that we can write down $u$ and $v$ so explicitly. Taking $f =tx + xy \in \K[[t]][x,y]$ it is still possible to linearize the induced map on $\m^2$, but it is not possible to give closed formulae for $u,v$. The reason for this lies in the fact that the scission one has to construct is a textile map which is not tactile!\\
7. In several variables it is in general difficult to prove that a textile map has constant rank. The following example gives a simple tactile map which does not have constant rank. By necessity of the rank condition this map is not locally linearizable: Let $F=x^2-y^2 \in \K[x,y]$. Set $\eta=x_1x_2 + \ldots + (x_1x_2)^l$, $l\in \N$, and try to lift the exact solution $(\eta, \eta) \in \K[[x_1,x_2]]^2$ of $F(x,y)=0$. Consider the tactile map
$$g\colon (x_1,x_2)^{l+1}\cdot \K[[x_1,x_2]]^2\ra \K[[x_1,x_2]]; g(x,y)= F(\eta + x, \eta+y).$$
To check whether $g$ has constant rank we have to lift any relation $(a,b)$ between $\p_1 g(0)=\eta$ and $\p_2g(0)=-\eta$ to a relation $(A,B)$ between $\p_1 g(\gamma)$ and $\p_2g(\gamma)$ for any $\gamma=(\gamma_1,\gamma_2)\in \K[[s,x_1,x_2]]$ of sufficiently high order (in the $x_i$) such that $\gamma\equiv 0 \mod (s)$ and $A\equiv a \mod (s)$, $B\equiv b \mod (s)$. Specify $(a,b)=(1,1)$; a short computation shows that then one has to find $A', B'\in (s)\subseteq \K[[s,x_1,x_2]]$ such that

$$(\eta + \gamma_1)A' - (\eta+\gamma_2)B'=\gamma_2-\gamma_1.$$

But this will be impossible (independently of the order of $\gamma$) if $\supp(\gamma_i)\cap \supp(x_1x_2)=\emptyset$. Therefore $g$ does not have constant rank at $0$. Note that the situation does not improve by providing an approximative solution of higher degree, since the above calculation is valid for arbitrary $l\in \N$. However, considering $g$ as a tactile map on $\K[[t]]^2$, it fulfills the rank condition on some $\m$-adic neighbourhood of $0$.
\end{Exs}

\begin{Rem}
  Theorem \ref{inverseintro} and \ref{thm2} may also be used to linearize linear maps, i.e., to reduce a linear textile map to its dominating linear part. This is for example the case when restricting the result of Theorem \ref{Thm:differential_equation} to linear differential equations. The linear textile map $$f\colon \K[[t]]\ra\K[[t]]; a \mapsto a + \frac{d}{dt}(a),$$ induced by the ordinary differential equation $x'+x=0$, is ``linearized'' to $\tilde f(a)=d/dt(a)$.
\end{Rem}

\begin{Rem}
  The above presented technique of linearization may also be applied
  in the situation of cords over a field of positive characteristics
  (or more general over a ring as in section
  \ref{sectiondrinfeld}). One has to treat carefully, though, since for
  example the intuitive characterization of contractiveness as in Lemma
  \ref{Lem:characterize_contractiveness} is then not valid
  anymore. 
\end{Rem}


The authors are indebted to an anonymous referee for pointing out ambiguities in an earlier version of the manuscript, and for encouraging to prove the Grinberg-Kazhdan-Drinfeld formal arc theorem by their methods. They wish to express their gratitude to T. Beck, S. Ishii, B. Lamel, F. Loeser and J. Schicho for stimulating conversations.

\medskip
\bigskip




\section{Felts and textile maps} \label{sectionformalmaps}

In this section we will distinguish subsets of $\Ce^m$, which are given as the vanishing set of {\it textile} maps. For these maps we will provide an inverse mapping theorem and the notion of tangent maps.

\subsection{Textile Maps} \label{firstdefinitions}
Let $\K$ be a field of characteristic $0$. A {\it cord} is a sequence $c=(c_\alpha)_{\alpha\in \N^n}$ of constants $c_\alpha$ in $\K$. Sometimes we will speak more precisely of an {\it $n$-cord over $\K$}. The entry $c_\alpha$ is the {\it coefficient} of $c$ at index $\alpha$ or {\it $\alpha$-coefficient} of $c$. The space  $\Ce=\Ce_n=\Ce(\K)$ of cords over $\K$ naturally identifies with the local $\K$-algebra of formal power series $\K[[x_1,\ldots, x_n]]$ in $n$ variables and coefficients in $\K$. The maximal ideal of $\K[[x]]$ of power series without constant term is denoted by $\m$. Substitution of the variables by power series in $\m$ provides an additional algebraic structure on $\Ce$. We define $\w c$ as the order of $c$ as a power series. The space $\Ce$ comes equipped with the $\m$-adic topology induced by the $0$-neighborhoods $\m^l$ of series of order $\geq l$. Vectors of cords $c=(c^1,\ldots, c^m) \in \Ce^m$ will be encoded in the following way: For the coefficients $c^i_\alpha$, $\alpha \in \N^n$, $i=1,\ldots, m$, we write $c_{(\alpha,i-1)}$ and understand these coefficients as a family indexed by $\N^{n+1}$. Thus a vector of $n$-cords $c \in \Ce^m$ corresponds to an $(n+1)$-cord $(c_\beta)_{\beta \in \N^{n+1}}$ where $c_\beta = 0$ for $\beta_{n+1}\geq m$ and $c_\beta =  c^i_{\bar \beta}$ for $\beta_{n+1}=i-1$, $\bar \beta = (\beta_1,\ldots, \beta_n)$.\\

A {\it textile map} $f\colon \Ce^m \ra \Ce^p$, $m,p\in \N$, maps cords $c=(c_\alpha)_{\alpha \in \N^{n+1}}\in \Ce^m$ to cords with coefficients consisting of polynomials in the $c_\alpha$. If the coefficient polynomials are linear, such mappings are called {linear}. Precisely speaking: $f$ is a textile map if the image of $c\in \Ce^m$ is of the form $$f(c) = (f_\alpha(c))_{\alpha \in \N^{n+1}}$$ with $f_\alpha \in \K[c_\beta, \beta \in \N^{n+1}]$. The $f_\alpha$ is referred to as the $\alpha$-component of $f$. Note: Although the definition of textile maps involves infinitely many variables, in many interesting examples they are given by finite data. See the examples in the introduction. Let $U$ be a subset of $\Ce^m$. A map $f\colon U\ra \Ce^p$ is called textile if it is the restriction of a textile map $\Ce^m \ra \Ce^p$ to $U$. Denote by $\K[\underline{\N}^m_n]$ the polynomial ring $\K[x_\alpha^j ; 1\leq j\leq m, \alpha\in \N^n]$. The {\it felt defined by a textile map $f\colon \Ce^m\ra\Ce^p$} is the closed subscheme $\Spec \K[\underline{\N}^m_n]/(f_\alpha; \alpha \in \N^{n+1})$ of $\Spec \K[\underline{\N}_n^{m}]$. For most applications in section \ref{sec:Applications} the underlying set of $\K$-points $\{c\in \Ce^m;f(c)=0\}$ is of primary interest. It is an ``algebraic'' subset of a countable Cartesian product of copies of $\K$, where the ``equations'' are infinitely many polynomials in a polynomial ring of countably many variables. Sometimes we will use the term ``felt'' also for Zariski-open subsets of some felt defined by a textile map.\\

Let $Y\subseteq \Ce^m$ be a felt. A map $f\colon Y\ra \K$ is called {\it regular at $p\in Y$} if there are a Zariski-open neighbourhood $U$ of $p$ and polynomials $g, h \in \K[\underline{\N}^m_n]$ with $h$ non-zero on $U$ such that

$$f = \frac{g}{h}$$

on $U$. If $f$ is regular at all points of $Y$ we call it {\it regular
  (on $Y$)}. We write $\Oo_Y$ for the $\K$-algebra of {\it regular
  functions on $Y$}. A textile map $f\colon Y \ra \Ce$ is called regular if its components $f_\alpha$ are regular functions on $Y$,i.e., $f_\alpha \in \Oo_Y$ for all $\alpha \in \N^{n}$.\\ 

The sum of textile maps $f,g\colon Y \ra \Ce$ is defined naturally by coefficientwise addition. Moreover we introduce the product of these textile maps by Cauchy-multiplication:

$$f\cdot g := (\sum_{\nu + \mu = \alpha} f_\nu g_\mu)_\alpha.$$

The {\it composition} of textile maps $f\colon \Ce^m \ra \Ce^p$ and $g\colon \Ce^q \ra \Ce^m$ is defined as follows: $f_\alpha \in \K[x_\beta; \beta \in B_\alpha]$, then $$(f\circ g)_\alpha(c):=f_\alpha(g(c)) = f_\alpha(g_\beta(c); \beta \in B_\alpha)$$
for $c\in \Ce$. In each coefficient one has a substitution of polynomials. Call $f$ {\it (right) invertible}, if there exists a textile map $g$ such that $f \circ g=\id$.\\

Any vector of formal power series $g\in \K[[y_1,\ldots, y_m]]^p$
induces a textile map $f\colon \m\cdot \Ce^m \ra \Ce^p$ via substitution $f(c)=g(c)$. Such maps are called {\it tactile}. The felts defined by tactile maps in the case $n=1$ are called  {\it arc spaces}. Note that the composition of tactile maps is the same as the tactile map defined by substitution of the corresponding power series.\\

Sometimes it will be convenient to represent textile linear maps by $\N^2$-matrices, i.e., elements  in $\K^{\N^2}$ : Let $\ell\colon \Aarc \ra \Aarc$ be given by $\ell_i(a)=\sum_{j=0}^\infty \ell_{ij}a_j$. Then we identify $\ell$ with the $\N^2$-matrix $\underline{\ell}:=(\ell_{ij})_{i,j \in \N}$. In analogy to linear algebra the composition of two textile linear maps $\ell^1, \ell^2$ corresponds to the product of the matrices 

$$\underline{\ell}^1\cdot \underline{\ell}^2 = (\sum_{l=0}^\infty \ell^1_{il}\ell^2_{lj})_{i,j}.$$

Note that the last sum is well-defined, since $\ell^j_{il}=0$, $j\in \{1,2\}$, except for a finite number of $l\in \N$.\\

Let $L\colon \R_{>0}^n\ra \R_{>0}$ be a positive linear form. Moreover, let $<$ be a monomial ordering on $\K[[x_1,\ldots, x_n]]$ induced by $L$, e.g., $x^\alpha < x^\beta$ if $L\cdot \alpha < L\cdot \beta$ or $L\cdot \alpha = L\cdot \beta$ and $\alpha < \beta$ lexicographically. Embed $\K[[x_1,\ldots, x_n]]^m$ in $\K[[x_1,\ldots, x_{n+1}]]$ by $x^\alpha\cdot e_i \mapsto x^{(\alpha, i-1)}$. For $\beta \in \N^{n+1}$ define $L\cdot \beta$ as $L\cdot \bar \beta$. Then we get an ordering on $\K[[x_1,\ldots, x_{n+1}]]$ by $x^\alpha < x^\beta$ if: (i) $L\cdot \alpha < L\cdot \beta$ or (ii) $L\cdot \alpha = L\cdot \beta$ and $\alpha < \beta$ lexicographically. This ordering defines a module ordering on $\K[[x_1,\ldots, x_n]]^m$ and thus on $\Ce^m$. Given a module ordering induced by $L$ as above and an element $b \in \K[[x]]^m$ the {\it order of $b$ with respect to $L$} is defined as

$$\w_L b = \min \{L\cdot \alpha; b_\alpha \neq 0\}.$$

Analogously we define the {\it order of a textile map $f\colon \Ce^m \ra \Ce^p$ with respect to $L$} as 

$$\w_L f =\min_{\alpha \in \N^{n+1}} \{L\cdot \alpha; f_\alpha \neq 0\}.$$

Usually, if $L$ has been specified, we simply write $\w b$
respectively $\w f$. If no linear form has been defined, we will
assume $L$ to be $L\colon (a_1,\ldots,a_n)\mapsto 1\cdot a_1 + \ldots
+ 1\cdot a_n$ (as in the beginning of this section). The ideals of
textile maps $f\colon \Ce^m \ra \Ce$ of order greater equal $l$, $l\in \N$, define a natural topology on the ring of textile maps $\Ce^m \ra \Ce$. For obvious reasons we will denote them by $\m^l$. It should be clear from context what is meant by $\m^l$, an ideal of power series or an ideal of textile maps.\\

Consider a textile map $h\colon U\subseteq \Ce^m \lra \Ce^p$, $U$ an $\m$-adic neighbourhood of $0$. Without loss of generality one may assume $h(0)=0$. For $\Gamma \in L(\Z^n)$ the textile map $h$ is called {\it $\Gamma$-shifting on $U$} if

\begin{equation} \label{contraction}
\w\left(h(a) - h(b)\right) \geq \w\left(a-b\right) + \Gamma \ \ \textnormal{for all}\  a,b \in U.
\end{equation}

\begin{Lem}\label{Lem:characterize_contractiveness}
A textile map $h\colon U \subseteq \Ce^m \ra \Ce$ is $\Gamma$-shifting on $U$ if and only if for all $\alpha \in \N^{n}$ the coefficient
$h_\alpha(c)$ is a polynomial in those $c_\nu$ for which $L\cdot \nu \leq
L\cdot \alpha - \Gamma$.
\end{Lem}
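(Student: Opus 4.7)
The plan is to translate the $\m$-adic order condition into a coefficient-wise statement and exploit the characteristic-zero hypothesis. The bridge is the elementary observation that $\w_L(a-b) \geq s$ is equivalent to requiring $a_\nu = b_\nu$ for every $\nu$ with $L\cdot \nu < s$. With this in hand, both directions reduce to tracking which input coefficients can possibly affect a given output coefficient $h_\alpha$.

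For the ``if'' direction, I assume each $h_\alpha$ lies in $\K[c_\nu;\, L\cdot \nu \leq L\cdot \alpha - \Gamma]$. Given $a,b\in U$, set $s := \w_L(a-b)$. For any index $\alpha$ with $L\cdot \alpha < s + \Gamma$, every variable $c_\nu$ occurring in the polynomial $h_\alpha$ satisfies $L\cdot \nu \leq L\cdot \alpha - \Gamma < s$, and so $a_\nu = b_\nu$ at each such $\nu$. Consequently $h_\alpha(a) = h_\alpha(b)$, meaning every coefficient of $h(a)-h(b)$ with $L$-weight strictly below $s+\Gamma$ vanishes. Hence $\w_L(h(a)-h(b)) \geq s + \Gamma = \w_L(a-b) + \Gamma$.

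For the ``only if'' direction, I argue contrapositively. Fix $\alpha$ and suppose, after discarding variables $c_\nu$ which are forced to vanish on $U$ (e.g.\ those with $L\cdot\nu$ below the threshold defining $\m^l\cdot\Ce^m \subseteq U$), that $h_\alpha$ genuinely involves some $c_\mu$ with $L\cdot\mu > L\cdot\alpha - \Gamma$. Since $\Char \K = 0$, the partial derivative $\p h_\alpha/\p c_\mu$ is a nonzero polynomial in finitely many variables. The $\m$-adic open set $U$ is Zariski-dense in $\Ce^m$, so I can pick $c^{(0)} \in U$ at which this derivative does not vanish. For a scalar $t\in\K^\times$ sufficiently close to $0$, the perturbation $c^{(t)} := c^{(0)} + t\,e_\mu$ still lies in $U$ (any slot with $L\cdot\mu$ large enough to begin with is unconstrained, and indeed only such slots can occur among the admissible variables of $h_\alpha$). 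By choice of $c^{(0)}$, one may arrange $h_\alpha(c^{(t)}) \neq h_\alpha(c^{(0)})$. Then $\w_L(c^{(t)}-c^{(0)}) = L\cdot\mu$ while $\w_L\bigl(h(c^{(t)})-h(c^{(0)})\bigr) \leq L\cdot \alpha$, and the shifting hypothesis forces $L\cdot\alpha \geq L\cdot\mu + \Gamma$, i.e.\ $L\cdot\mu \leq L\cdot\alpha - \Gamma$, a contradiction.

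The main obstacle is bookkeeping: the polynomial representation of $h_\alpha$ is only determined modulo the vanishing ideal of variables constrained to be zero on $U$, so one has to agree to take a ``reduced'' representative before asserting genuine dependence on $c_\mu$. Once this convention is fixed, the perturbation argument works because the relevant $\mu$ automatically lie in the range of free coefficients of $U$; no further analytic input is needed beyond $\Char \K = 0$ and the density of $U$.
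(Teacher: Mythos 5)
The paper states this lemma without proof, so there is no argument in the text to compare against; your proof is correct and both directions are handled with the right ideas. The ``if'' direction is exactly the bookkeeping observation that $\w_L(a-b)\geq s$ means $a_\nu=b_\nu$ whenever $L\cdot\nu<s$, so a coefficient $h_\alpha$ that reads only inputs $c_\nu$ with $L\cdot\nu\leq L\cdot\alpha-\Gamma<s$ cannot distinguish $a$ from $b$ once $L\cdot\alpha<s+\Gamma$; this gives the shift. The ``only if'' direction is where $\Char\K=0$ genuinely enters, consistent with the paper's remark that this characterization fails in positive characteristic, and your device of detecting genuine dependence on a forbidden $c_\mu$ via a nonvanishing partial derivative is the right one.

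Two small phrasing corrections, neither of which is a real gap. First, $U=\m^l\cdot\Ce^m$ is \emph{not} Zariski-dense in $\Ce^m$ --- it is a Zariski-closed coordinate subspace. What you actually use, and what your parenthetical about discarding constrained variables already points at, is that after passing to the reduced representative of $h_\alpha$ (killing the finitely many $c_\nu$ that vanish identically on $U$), $U$ is a full affine space in the remaining coordinates, and a nonzero polynomial in finitely many of those coordinates must have a nonvanishing point there. Second, ``$t$ sufficiently close to $0$'' has no meaning over a general field of characteristic $0$; since $t\mapsto h_\alpha(c^{(0)}+t e_\mu)$ is a nonconstant polynomial (its $t$-derivative at $0$ is $\p h_\alpha/\p c_\mu(c^{(0)})\neq 0$) and $\K$ is infinite, there simply exists some $t\neq 0$ giving a different value, which is all the argument needs. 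With these adjustments the proof is clean and complete.
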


For $f\neq0$ on $U$ the {\it degree of contraction of $f$ on $U$ (with respect to $L$)}, denoted by $\kappa(f)$, is defined as the maximal $\Gamma$ such that (\ref{contraction}) is fulfilled. If $f=0$ on $U$, then we set $\kappa(f)=\infty$. We also say $f$ is {\it contractive of degree $\Gamma$ (on $U$ with respect to $L$)}. In case $\Gamma > 0$ the map $h$ is simply called {\it contractive}. By the Lemma above it is easy to see that the contraction degree is well-defined. Let $f, g$ be textile maps of contraction degree $\Gamma$ resp. $\Theta$ on their domains of definition $U$ respectively $V$. Then $\kappa(f+g)\geq \min\{\Gamma, \Theta\}$ on $U\cap V$. Next assume that the composition $f\circ g$ is well-defined and that $\kappa(f|_{g(V)})=\Gamma$. Then $\kappa(f\circ g)=\Gamma + \Theta$ on $V$.\\

It turns out that tactile maps have nice contraction properties. This can be seen in the following example:

\begin{Ex}
Consider the ``monomial'' map $f\colon U\subseteq \Ce^m \ra \Ce; c \mapsto
c^\alpha$, $\alpha \in \N^{m}$, with $U=\m^{p_1}\times \cdots
\times \m^{p_m}$. Denote by $p_{\min}$ the minimum of the $p_i$'s. It is immediate that $\kappa(f)=\sum_{i=1}^m \alpha_i p_i - p_{\min}$.
\end{Ex}
\medskip

This example shows that one has some control over the degree of contraction of tactiles. By shrinking the domain of definition $U$ the contraction degree increases!\\

The notion of contractivity will be used in the following more general context (see section \ref{sectiondrinfeld}). Let $A$ be a local, commutative and unital $\K$-algebra with nilpotent maximal ideal $\n$, i.e., there exists an integer $N\in \N$ such that $\n^N=0$. The set of cords with coefficients in $A$ will be denoted by $\Ce_A$. For $a\in A$ set 
$$\pow(a)=\left\{
\begin{array}{ll}
\max\{i; a\in \n^i\} & a\neq 0\\
N & a=0
\end{array}
\right. 
$$
where $\n^0=A$. For an element $c\in \Ce^m_A$ we define the {\it refined order of $a$} as the pair
$$\W(c)=(\w(c),\pow(c_{\w c})) \in \N^2, $$

where $\N^2$ is considered with the lexicographic ordering. If $(c_i)_{i\in \N}$ is a family of cords in $\Ce^m_A$ with increasing sequence of modified orders $(\W(c_i))_i$, then the family is summable. A map $h\colon U\subseteq \Ce_A^m\ra\Ce^p_A$, $U$ an $\m$-adic neighbourhood of $0$, is called contractive if for all cords $a$ and $b$ in $U$
$$\W(h(a)-h(b))>\W(a-b)$$
holds. If $A=\K$ the only nilpotent ideal in $A$ is the zero-ideal and both notions of contractivity coincide. 
\begin{Rem}
 For other applications it might be reasonable to interchange the order of $\w$ and $\pow$ in the definition of the modified order.
\end{Rem}

\begin{samepage}
\begin{Thm}[Inverse Mapping Theorem] \label{inversemappingthm}
Let $U\in \Ce_A^m$ be an $\m$-adic neighbourhood of $0$ and $f\colon
U\subseteq \Ce_A^m \ra \Ce_A^m$ be textile with $f(0)=0$. Assume that
$f=\ell + h$, $f(U)\subseteq W$ (neighbourhood of 0), where
$\ell\colon U\ra W$ is textile, linear and invertible, i.e., there
exists a linear map $\tilde \ell\colon W\ra U$ with $\ell\circ \tilde
\ell=\id_W$ and $\tilde \ell \circ \ell=\id_U$. If $\tilde \ell h $ is
contractive on $U$, then $f$ admits on $U$ an inverse, i.e., there is
a textile map $g\colon f(U) \ra U$ with $f\circ g=\id_{f(U)}$ and $g\circ f=\id_U$. 
\end{Thm}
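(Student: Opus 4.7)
The strategy is a Banach-style fixed-point argument adapted to the refined order $\W$. The equation $f(a)=b$ is equivalent to $\ell(a)=b-h(a)$, and since $\tilde\ell\ell=\id_U$, to
\[
a=\Phi_b(a):=\tilde\ell(b)-\tilde\ell h(a).
\]
Any $b$-parametrized fixed point of $\Phi_b$ is a preimage of $b$ under $f$, so I would define the would-be inverse $g$ as the limit of the iteration $a_0:=0$, $a_{k+1}:=\Phi_b(a_k)=\tilde\ell(b)-\tilde\ell h(a_k)$.

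First I would verify convergence. By the contractivity hypothesis $\W\bigl(\tilde\ell h(u)-\tilde\ell h(v)\bigr)>\W(u-v)$ for $u,v\in U$. An induction on $k$ therefore gives
\[
\W(a_{k+1}-a_k)=\W\bigl(\tilde\ell h(a_k)-\tilde\ell h(a_{k-1})\bigr)>\W(a_k-a_{k-1}),
\]
so the sequence $\bigl(\W(a_{k+1}-a_k)\bigr)_k$ is strictly increasing in $\N\times\{0,\dots,N\}$ with the lexicographic order. In particular, for each fixed multi-index $\alpha\in\N^{n+1}$ there is some $k(\alpha)$ beyond which $\W(a_{k+1}-a_k)>(L\cdot\alpha,\,N)$, whence the $\alpha$-coefficient of $a_k$ no longer changes. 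Hence $g(b):=\lim_k a_k$ exists coefficientwise, and one must still check that $a_k$ stays in $U$; but $U=\m^l\cdot\Ce_A^m$ for some $l$, and $\W(a_k)=\W(a_k-a_0)\geq\W(a_1-a_0)=\W(\tilde\ell(b))$ (for $b$ in a small enough neighbourhood of $0$) is bounded below by $l$, so the iterates remain in $U$.

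Next I would confirm that $g$ is textile. Each iterate $a_k$ has, coordinate by coordinate, coefficients that are polynomials in the coefficients of $b$: this is true for $a_0=0$, preserved by $\tilde\ell$ (linear textile), and preserved by $h$ (textile by hypothesis, hence its coefficients are polynomials in the coefficients of its argument, which are themselves polynomials in those of $b$). The crucial point is that, thanks to the stabilization above, the $\alpha$-coefficient of $g(b)$ equals the $\alpha$-coefficient of $a_{k(\alpha)}$, and is therefore a polynomial in finitely many coefficients of $b$. Thus $g$ is textile. Passing to the limit in $a_{k+1}=\tilde\ell(b)-\tilde\ell h(a_k)$ (which is legal because both sides stabilize coefficientwise) gives $g(b)=\tilde\ell(b)-\tilde\ell h(g(b))$, i.e.\ $\ell(g(b))=b-h(g(b))$, so $f\circ g=\id_{f(U)}$.

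For the other composition, I would apply contractivity directly: if $f(a)=f(a')$ with $a,a'\in U$, then $\ell(a-a')=h(a')-h(a)$, whence $a-a'=\tilde\ell h(a')-\tilde\ell h(a)$ and $\W(a-a')>\W(a-a')$, forcing $a=a'$. So $f$ is injective on $U$; combined with surjectivity of $f$ onto $f(U)$ and the already established $f\circ g=\id$, this yields $g\circ f=\id_U$. The main obstacle I anticipate is the bookkeeping with the refined order $\W$, in particular ensuring that $\W$-contractivity (rather than plain $\w$-contractivity) really is enough to force coefficientwise stabilization in finitely many steps — this uses critically that the second component of $\W$ takes values in the finite set $\{0,\dots,N\}$ (nilpotency of $\n$), so that strict lex-increase of $\W$ eventually surpasses any prescribed threshold $(L\cdot\alpha,N)$.
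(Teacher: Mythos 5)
Your proof is correct and follows essentially the same route as the paper: reduce to the fixed-point equation $a=\tilde\ell(b)-\tilde\ell h(a)$, run the Picard iteration from $a_0=0$, and use the strict lexicographic increase of $\W$ on the increments (together with the boundedness of the second component by the nilpotency exponent $N$) to get coefficientwise stabilization, hence a textile limit that is a right inverse. The only cosmetic difference is at the end: the paper establishes $g\circ f=\id$ by showing $\W(g_m(f(a))-a)$ is increasing, whereas you deduce injectivity of $f$ directly from contractivity and combine it with $f\circ g=\id$ — both are valid and equivalent in content, and you in fact supply some details (the iterates staying in $U$, and the textile property of the limit) that the paper leaves implicit.
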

\end{samepage}

\begin{proof}
First simplify to the case $f=\id + h$ with $h$ contractive on $U$ by considering $\tilde \ell f = \id + \tilde \ell h$. Then recursively define a sequence $\left(g_j\right)_{j\in \N}$ of textile maps $U \ra \Ce_A^m$ by $g_0=0$ and $g_{j+1} = \id - h\circ g_j$ for $j\geq1$. The sequence $(g_j)_j$ converges pointwise on $U$. Indeed, $g_{j+1}$ can be written as
$$g_{j+1}=g_0+\sum_{i=1}^{j+1} D_i,$$
with $D_j=g_{j} - g_{j-1}$. For any $a\in U$ the family $\left(D_j(a)\right)_{j\in \N}$ is summable: For $j\geq 2$ we can write $D_j$ as
$$D_j=h\circ g_{j-2} - h\circ g_{j-1},$$ and thus by contractivity of $h$ 
$$\W D_j(a)>\W D_{j-1}(a)$$
for any $a\in U$. Therefore, $g=\lim g_j$ is a well-defined textile map $U\ra \Ce_A^m$. It remains to show that $g$ is a (right-) inverse for $f$. But for any $j\in \N$ 
$$f\circ g - \id=(g-g_j) + (h\circ g - h\circ g_j) + (h\circ g_j - h\circ g_{j-1}),$$
and each of the three summands on the right hand side tends to $0$ for $j\ra \infty$. Therefore, $g$ is a right-inverse for $f$.\\ It's not hard to see that $g$ is in fact also a left-inverse for $f$. It suffices to show that $$\left(\W(g_m(f(a))-a)\right)_{m\in \N}$$ is an increasing sequence (for all $a\in U$) which follows immediately from contractiveness of $h$.
\end{proof}

\begin{Rems}
(a) Note that there are textile invertible maps which are not of
the special form demanded in the theorem. An example would
be $$f\colon \Aarc \ra \Aarc; (a_i)_i \mapsto (a_0 + a_1^2, a_1, a_2 +
a_3^2, a_3, + \ldots ).$$
Nevertheless this textile map may be transformed into a form which allows the use of Theorem \ref{inversemappingthm}.\\
(b) Theorem \ref{inversemappingthm} implies as a special case the classical Inverse
Mapping Theorem for formal power series: Let $f=(f_1,\ldots,f_n) \in \K[[x_1,\ldots,x_n]]^n$ with $f(0)=0$. Denote by $\p f$ its Jacobian matrix $\left( \p f_i/\p x_j\right)_{i,j}$. If $\det \p f(0) \neq 0$ then there exist $g_1,\ldots, g_n \in \m\cdot \K[[x_1,\ldots, x_n]]$ with $$f(g_1(x),\ldots, g_n(x))=(x_1,\ldots, x_n).$$
\end{Rems}

\begin{Lem}\label{lem:contractive_map_nilpotent_sufficient_criterion}
 Assume that $h\colon U\subseteq \Ce^m_A \ra \Ce_A$, $U$ an $\m$-adic neighbourhood of $0$, is textile with $h(0)=0$. Moreover, let its $\alpha$-coefficient $h_\alpha$ be of the form $h_\alpha=h'_\alpha + h''_\alpha$ with $h'_\alpha \in A[x_l^j;1\leq j \leq m, 0\leq l< e]$ and $h''_\alpha \in \n[x^j_e,1\leq j \leq m,]$, where $e=|\alpha|$. Then $h$ is contractive. 
\end{Lem}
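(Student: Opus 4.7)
The plan is to verify the contractivity inequality $\W(h(a) - h(b)) > \W(a-b)$ coefficient-by-coefficient, exploiting the level-separation built into the decomposition $h_\alpha = h'_\alpha + h''_\alpha$. Fix $a, b \in U$, set $d = a-b$, and write $\W(d) = (e_0, p_0)$; so $d_l^j = 0$ whenever $|l| < e_0$, while $d_l^j \in \n^{p_0}$ at level $|l| = e_0$.

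First I would dispatch the indices $\alpha$ with $|\alpha| < e_0$. For such $\alpha$ both $h'_\alpha$ and $h''_\alpha$ involve only the variables $x_l^j$ with $|l| \leq |\alpha| < e_0$, on which $a$ and $b$ agree by the definition of $e_0 = \w(d)$. Hence $h_\alpha(a) = h_\alpha(b)$, which already yields $\w(h(a) - h(b)) \geq e_0$.

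The substantive step concerns $|\alpha| = e_0$. Here $h'_\alpha$ still depends only on variables at levels $< e_0$, so $h'_\alpha(a) = h'_\alpha(b)$ and
$$(h(a)-h(b))_\alpha \;=\; h''_\alpha(a) - h''_\alpha(b).$$
Expanding $h''_\alpha = \sum_\beta c_\beta X^\beta$ with $c_\beta \in \n$ and $X$ the tuple of level-$e_0$ variables, the telescoping identity places each $a^\beta - b^\beta$ in the $A$-ideal generated by the differences $d_l^j$ for $|l| = e_0$, hence in $\n^{p_0}$. Multiplying by $c_\beta \in \n$ pushes the whole sum into $\n \cdot \n^{p_0} = \n^{p_0 + 1}$, giving $\pow\bigl((h(a)-h(b))_\alpha\bigr) \geq p_0 + 1$ uniformly for every index $\alpha$ at level $e_0$.

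Combining both steps yields $\W(h(a) - h(b)) > (e_0, p_0) = \W(d)$: either $\w(h(a)-h(b))$ strictly exceeds $e_0$, or it equals $e_0$ and every candidate leading coefficient gains at least one power in $\n$. The point that requires the most care, and the one I expect to be the main obstacle, is the bookkeeping of what exactly the \emph{leading coefficient} means in $\W$ when several multi-indices share an $L$-value; but since the $\pow$-estimate at level $e_0$ is uniform across all such $\alpha$, this ambiguity is immaterial. The rest is a direct computation resting on the nilpotency of the $c_\beta$ together with the level-separation of variables in $h'_\alpha$ versus $h''_\alpha$.
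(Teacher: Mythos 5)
Your proof is correct and follows the same strategy as the paper's: isolate the level-$e_0$ coefficients, observe that $h'_\alpha$ contributes nothing there because it only sees strictly lower levels, and then argue that the $h''_\alpha$-contribution gains one extra power of $\n$ because its coefficients already lie in $\n$ and the differences $a^\beta-b^\beta$ lie in $\n^{p_0}$. The paper makes the last step via a Taylor expansion of $h''_i$ around $b$, while you use the telescoping identity for $a^\beta-b^\beta$; these are interchangeable. The only genuine difference is that the paper explicitly restricts to the univariate case "for simplicity of notation," whereas you handle the multivariate indexing directly and correctly note that the uniformity of the $\pow$-estimate over all $\alpha$ at level $e_0$ makes the ambiguity in "leading coefficient" harmless.
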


\begin{proof}
 For simplicity of notation we restrict to the case $h\colon \Aarc_A^m\ra \Aarc_A$. Let $a,b \in U$ with $\w(a-b)=i$. Write $a=b+\zeta$ for some $\zeta \in U$, $\zeta_i\neq 0$. By assumption on $h$ we have $h_j(a)-h_j(b)=0$ for $0\leq j\leq i-1$. Using Taylor expansion we see for the $i$-th coefficient
\begin{equation}\label{eq:taylor_h_zweistrich}
h_i(b+\zeta)-h_i(b)= \p h''_i(b)\cdot \zeta_i + \p^2h''_i(b)\cdot \zeta_i^2 + \ldots
\end{equation}

where $\p^lh''_i(b)\cdot \zeta_i^l$ abbreviates all order $l$ terms in the Taylor expansion. By assumption on $h$ all partial derivatives in (\ref{eq:taylor_h_zweistrich}) lie in $\n$, thus $$\pow(h_i(a)-h_i(b))\geq \pow(\zeta_i)+1>\pow((a-b)_i),$$
so $h$ is contractive.
\end{proof}

Denote by $\Ce(q)$ the {\it set of $q$-cords}, that is $$\Ce(q)=\{(c_\alpha)_{\alpha\in \N^n} \in \K^{n(q+1)}; |\alpha|<q+1\}\cong \Ce\mod \m^{q+1}.$$ 
Analogously we introduce $\Ce^m(q)$, and especially if $X$ is the felt defined by a textile map $f\colon \Ce^m \ra \Ce$ the {\it set of $q$-cords of $X$} $$X(q)=\{(c_\alpha)_{\alpha\in \N^{n+1}} \in (\K^{n(q+1)})^m; |\alpha|<q+1, f(c)=0 \mod \m^{q+1}\}.$$ Clearly, $X(q)$ has a natural structure as a subscheme of $\A^{n(q+1)m}$. The canonical projections $X\ra X(q)$, resp. $X(q)\ra X(p)$ for $q\geq p$, induced by truncation of cords, resp. $q$-cords,  will be denoted by $\pi_q$, resp. $\pi^q_p$. We will identify $\Ce(q)$ with those cords $c\in \Ce$ such that $c_\alpha=0$ for all $\alpha\in \N^n$ with $|\alpha|>q$. Then a textile map $f\colon \Ce^m \ra \Ce^r$ induces a map $f_q\colon \Ce(q)^m\ra\Ce(q)^r$ with components $(f_q)_\alpha$. It's easy to see that if $f$ is $0$-shifting, then $f$ is compatible with the truncation maps $\pi_q$ in the sense that the following diagram is commutative:

$$\xymatrix@1{ \Ce^m \ \ar@{->}[r]^{f} \ar@{->}[d]_{\pi_q} & \ \Ce^r \ar@{->}[d]^{\pi_q}\\
\Ce^m(q)\ar@{->}[r]^{f_q} \ar@{->}[d]_{\pi^q_p} &\ \Ce^r(q) \ar@{->}[d]_{\pi^q_p}\\
\Ce^m(p) \ar@{->}[r]^{f_p} & \ \Ce^r(p) }$$ 

Let $f\colon \Ce^m \ra \Ce^p$ be a textile map. The felt $X=f^{-1}(0)$ is called {\it smooth at a cord} $c \in X$ if there is an \m-open neighbourhood $U$ of $c$ such that $X\cap U$ is isomorphic to the kernel of a linear textile map. Smoothness of a ($\K$-)point in classical algebraic geometry can be embedded into the theory of felts in the following way: Let $X$ be a subvariety of $\A^n_\K$ given by the ideal $\langle f_1, \ldots, f_p\rangle \subseteq \K[x_1,\ldots, x_n]$. Consider the tactile map

$$F_X\colon \Aarc^n \ra \Aarc^p; a \mapsto \left(f_1(a), \ldots, f_p(a)\right).$$ The $\K$-points of $X$, i.e., elements $b=(b^1,\ldots, b^n)\in \K^n$ with $f_i(b)=0$ for all $i$, correspond to arcs $a=(a^1,\ldots, a^n)\in X_\infty$ with $a^i=(b^i,0,\ldots)$.
Using the classical Implicit Function Theorem or the Rank Theorem from section \ref{sectionrankthm} it is obvious that $X_\infty$ is smooth at $a$ if $b$ is a smooth point of $X$. With the same methods one shows that all arcs $a\in X_\infty$ with $a(0)$ a smooth point of $X$ are smooth cords of $X_\infty$.


\subsection{Differential calculus of textile maps} \label{differentialcalc}
This section provides the basic definitions for the differential
apparatus in the context of textile maps. Consider a textile map
$f\colon \Ce^m \ra \Ce; c\mapsto \left(f_\alpha(c)\right)_\alpha$. For $\nu \in \N^{n+1}$ the {\it partial derivative of $f$ with respect to $\nu$} is defined by coefficientwise differentiation: 
$$\p_\nu f(c) := \left(\frac{\p f_\alpha(c)}{\p c_\nu}\right)_{\alpha \in \N^n}.$$ 

Obviously $\p_\nu$ is a $\K$-derivation on $\Ce$. We write $\p f(a)$ for the vector $(\p_\nu f(a))_{\nu \in \N^{n+1}}$. The {\it directional derivative of $f$ in $a\in \Ce^m$ in direction of $v \in \Ce^m$}, is given by $$T_a f\cdot v := \left.\frac{1}{s}\left( f(a+sv) - f(a)\right) \right|_{s=0}.$$ Using the notation $(\p f)(a)\bullet v := \sum_{\nu\in \N^{n+1}} \p_\nu f(a) v_\nu$ this yields for a textile map $f$: $$T_a f\cdot v = (\p f)(a)\bullet v.$$

In the case of tactile maps the directional derivative can be calculated as usually. Let $F: \Ce^m \ra \Ce$ be given by $c\mapsto f(c)$, with $f\in \K[x_1,\ldots, x_m]$, then $$T_a F\cdot v =\p_1f(a)\cdot v_1 + \cdots + \p_m f(a)\cdot v_m.$$

It's easy to see that each $f$ has a Taylor expansion around $a$ given by $$f(y) = \sum_{\nu \in \N^{n+1}}\frac{1}{\nu!}(\p_\nu f)(a)(y-a)^\nu.$$

Consider textile maps $f:\Ce^m\ra \Ce^p$ and $g:\Ce^l \ra \Ce^m$ with well-defined composition $f\circ g$. The chain-rule for partial derivatives is given by:  $\p_\nu \left(f\circ g\right)(a) = \p f(g(a))\bullet \p_\nu g(a)$. 
More generally one deduces the following properties of the derivative:

\begin{Prop}
With the above assumptions the following holds:

1. $\p(f\circ g)(a) = (\p f(g(a))\bullet \p_\nu g(a))_\nu$\\
2. Let $\phi$ be a textile linear map, then $$T_a(\phi\circ f) \cdot v = \phi(T_a f\cdot v).$$
3. Let $\phi$ be of the form $\phi = \id + g$, and $f\circ g$ well-defined, then

\begin{eqnarray*}
T_a(f\circ \phi) \cdot v & = & \p f (\phi(a))\bullet v + \p
f(\phi(a)) \bullet
\left(\p g (a) \bullet v\right)\\
& = & T_{\phi(a)} f \cdot v + T_{\phi(a)}f\cdot \left(T_a
g(a)\cdot v\right)\ .
\end{eqnarray*}
\end{Prop}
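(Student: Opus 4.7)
The plan is to prove each of the three identities componentwise. Since every component $f_\alpha$ of a textile map is a polynomial in finitely many of the coordinate functions $c_\beta$, every identity reduces to a standard polynomial computation applied in each coefficient; the infinite index sets cause no trouble because the $\bullet$-contractions and chain-rule sums are, componentwise, finite.

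\textbf{Part 1.} Fix $\alpha \in \N^{n+1}$ and $\nu \in \N^{n+1}$. The $\alpha$-component $(f\circ g)_\alpha$ is the polynomial $f_\alpha(g_\mu(c) ; \mu \in \N^{n+1})$, and $f_\alpha$ depends only on finitely many variables $x_\mu$. Apply the ordinary (finite) chain rule for polynomials to obtain
\[
\p_\nu (f\circ g)_\alpha(a) \;=\; \sum_{\mu} (\p_\mu f_\alpha)(g(a)) \cdot \p_\nu g_\mu(a),
\]
where the sum is finite. This is precisely the $\alpha$-component of $\p f(g(a))\bullet \p_\nu g(a)$, proving (1).

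\textbf{Part 2.} Recall the matrix representation $\phi = (\phi_{\alpha\beta})$ of a linear textile map, so $(\phi\circ f)_\alpha(c) = \sum_\beta \phi_{\alpha\beta} f_\beta(c)$, again with only finitely many nonzero $\phi_{\alpha\beta}$ for each $\alpha$. Then
\[
T_a(\phi\circ f)\cdot v \;=\; \p(\phi\circ f)(a)\bullet v \;=\; \sum_{\nu,\beta} \phi_{\alpha\beta}\, \p_\nu f_\beta(a)\, v_\nu,
\]
and swapping the order of summation on the right gives $\phi(T_a f\cdot v)$. Equivalently, this is just the statement that the $\K$-linear operator $\phi$ commutes with the $\K$-linear directional-derivative operator $v\mapsto T_a(\,\cdot\,)\cdot v$ applied coefficientwise.

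\textbf{Part 3.} Combine (1) with the evident identity $\p_\nu(\id + g)(a) = e_\nu + \p_\nu g(a)$, where $e_\nu$ denotes the cord with $1$ at index $\nu$ and $0$ elsewhere. Part 1 yields
\[
\p_\nu(f\circ \phi)(a) \;=\; \p f(\phi(a)) \bullet \bigl(e_\nu + \p_\nu g(a)\bigr) \;=\; \p_\nu f(\phi(a)) + \p f(\phi(a))\bullet \p_\nu g(a).
\]
Multiplying by $v_\nu$, summing over $\nu$, and pulling $\p f(\phi(a))\bullet(-)$ outside the second sum (permissible since $\bullet$ is $\K$-linear in its right argument and each coefficient involves only a finite sum) gives
\[
T_a(f\circ \phi)\cdot v = \p f(\phi(a))\bullet v + \p f(\phi(a))\bullet\bigl(\p g(a)\bullet v\bigr),
\]
which, rewritten with $T_{\phi(a)}f$ and $T_a g$, is the claimed formula.

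The only conceptual obstacle is verifying that each $\bullet$-contraction and each swap of summation is componentwise a finite sum — which follows because each $f_\alpha$ depends polynomially on only finitely many $c_\mu$'s and each row of a textile linear map has finite support. Once this is noted, the proofs are purely formal polynomial calculus.
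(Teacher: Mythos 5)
Your proof is correct, and the componentwise reduction to the ordinary polynomial chain rule — with the key observation that each $f_\alpha$ depends on only finitely many $c_\nu$ (and each row of a linear textile map has finite support), so that every $\bullet$-contraction and every interchange of sums is finite in each coefficient — is exactly the argument the statement calls for. The paper itself does not spell this out: it simply cites \cite{bruschek} for the ``easy calculations,'' so there is no in-paper proof to compare against, but the elementary coefficientwise calculus you give is the intended one. One tiny presentational point: in Part~3 the paper's displayed formula reads $T_a g(a)\cdot v$, a typo for $T_a g\cdot v$, and your derivation correctly produces the latter.
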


\begin{proof}
For the easy calculations see \cite{bruschek}.
\end{proof}

\medskip

\bigskip


\section{Scissions of linear maps} \label{sectionscission}

We recall the notion of scissions of linear maps (cf. \cite{HM}, p. 99). In Theorem \ref{inversemappingthm} the inverse of a map is constructed using the inverse of its tangent map. For the proof of the Rank Theorem (see section \ref{sectionrankthm}) a similar construction is needed. There the linearizing automorphisms rely on inverting the tangent map on its image (and going back to a direct complement of its kernel).\\
Let $\ell:V \ra W$ be a linear map of $\K$-vector spaces $V$ and $W$. A {\it scission} of $\ell$ is a linear map $\sigma:W \ra V$ with $\ell \sigma \ell = \ell$. Scissions provide projections $\pi_{\ker}:= \id -\sigma \ell$ onto $\ker(\ell)$ and $\pi_\im := \ell\sigma$ onto $\im(\ell)$. Thus, $\sigma$ induces direct sum decompositions $$V=\ker(\ell)\oplus \im (\sigma \ell) \ \ \textnormal{and} \ \ W=\im(\ell)\oplus \ker(\ell \sigma).$$ On the other hand each direct sum decomposition of the form $$V=\ker(\ell)\oplus L, \ \ W=\im(\ell)\oplus J$$ induces a scission $\sigma$ of $\ell$ by $$\sigma:=\left( \ell|_L\right)^{-1}\circ \pi,$$ where $\pi$ is the projection from $W$ onto $\im(\ell)$ with kernel $J$. In the following sections we will explicitly construct scissions for textile linear maps. This is necessary to obtain precise information on the degree of contraction of the scission used to linearize a map of constant rank. For the case of $\K[[x]]$-linear maps $\ell$, the degree of contraction will be given by the maximal order of a standard basis of the module $\im(\ell)$.\\

\subsection{Scissions of $\K[[x]]$-linear maps} \label{sectionscission1}
We start with a short reminder on standard bases. Most of the notation has already been introduced in section \ref{firstdefinitions}. Write $x^{\alpha,j}$, $\alpha \in \N^n$, $j\in \{1,\ldots,m\}$, for the vector $(0,\ldots, x^\alpha, 0,\ldots, 0)^\top$ in $\K[[x]]^m$, where $x^\alpha$ is at the $j$-th component. For $\alpha \in \N^n$ we use the usual multiindex notation, i.e., $x^\alpha=x_1^{\alpha_1}\cdots x_n^{\alpha_n}$. Thus, each element $a\in \K[[x]]^m$ can be uniquely written as $$a=\sum_{\alpha, j} a_{\alpha,j} x^{\alpha,j}.$$ Fix a monomial order $<$ on $\N^n$. We assume that it is induced by a positive linear form $L$ on $\R_{>0}^n$. Extend this order to a module ordering on $\K[[x]]^m$, again denoted by ``$<$''. The {\it support} of an element $a\in \K[[x]]^m$ is defined as $$\textnormal{supp}(a):=\{(\alpha, j) \in \N^{n+1}; a_{\alpha,j}\neq 0\}.$$ By $\In(a)$ we denote the initial monomial vector $x^{\alpha,j}$ of $a$, i.e., $(\alpha,j)=\min_< \textnormal{supp}(a)$. An element $a\in \K[[x]]^m$ is called {\it $L$-monic} or just {\it monic}, if the coefficient of $\In(a)$ is $1$. Let $M$ be a submodule of $\K[[x]]^p$. Then $\In(M)$ denotes the initial module of $M$: $\In(M):=\langle\In(a); a\in M\rangle$. A subset $\Fst=\{f_1,\ldots,f_l\}$ of $M$ is called a {\it standard basis} of $M$ (w.r.t. $<$) if $$\In(\langle f_1,\ldots, f_l\rangle)=\langle \In(f_1), \ldots, \In(f_l)\rangle.$$

Consider a $\K[[x]]$-linear map $\ell:\K[[x]]^m\ra \K[[x]]^p; a=(a^1,\ldots, a^m) \mapsto \sum_{i=1}^mf_ia^i$. Assume that $f_1,\ldots, f_m$ form a standard basis for $\im(\ell)$. Choose a partition of $S=\textnormal{supp}(\In(\im(\ell))) \subseteq \N^n\times\{1,\ldots,p\}$ by disjoint sets $S_1,\ldots, S_m$, such that $S_i \subseteq\textnormal{supp}(\K[[x]]\cdot \In(f_i))$. For a subset $A$ of $\N^n\times \{1,\ldots, p\}$ we will consider the projection $p_1(A)$ onto the first factor $p_1(A):=\{\alpha \in \N^n; \exists j \in \{1,\ldots, p\}: (\alpha,j)\in A\}$.  Moreover, set $$\Delta(\ell):=\{ b\in \K[[x]]^p; \textnormal{supp}(b)\cap\textnormal{supp}(\In(\im(\ell)))=\emptyset\}$$ and $$\nabla(\ell):=\{a\in \K[[x]]^m; \textnormal{supp}(a_if_i)\subseteq S_i, \ \ \textnormal{for all} \ \ i\}.$$ With $\ell^0: \K[[x]]^m\ra\K[[x]]^p; a \mapsto \sum_i \In(f_i)a^i$ this means $$\K[[x]]^m=\ker(\ell^0)\oplus\nabla(\ell) \ \ \textnormal{and} \ \ \K[[x]]^p=\im(\ell^0)\oplus \Delta(\ell).$$ As the next theorem shows even more is true: This decomposition still holds with $\ell^0$ replaced by $\ell$.

\medskip

\begin{Thm} \label{divisionthm}
Let $f_1,\ldots, f_m \in \K[[x]]^p$ be a monic standard basis. Denote $\In(f_i)$ by $x^{\alpha^i,j_i}$. Let $S=\uplus_{i=1}^m S_i$ be a partition of the support of the initial module generated by $f_1,\ldots, f_m$. Then for all $f\in \K[[x]]^p$ there exist unique quotients $g_j\in \K[[x]]$, $1\leq j \leq m$, and a unique remainder $h \in \K[[x]]^p$ such that\\

1. $f=g_1f_1+\ldots + g_mf_m + h$ with $\textnormal{supp}(g_i)\subseteq p_1(S_i) - \alpha^i$ and $\textnormal{supp}(h)\subseteq \N^{n}\times \{1,\ldots, p\} \setminus S$.\\

2. The linear map $\ell:\K[[x]]^m\ra \K[[x]]^p; a=(a^1,\ldots, a^m) \mapsto \sum_i f_ia^i$ induces a direct sum decomposition $$\K[[x]]^m=\ker(\ell)\oplus \nabla(\ell) \ \ \textnormal{and} \ \ \K[[x]]^p=\im(\ell)\oplus \Delta(\ell).    $$
\end{Thm}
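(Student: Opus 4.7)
The plan is to establish existence by running a formal division algorithm that converges in the $\m$-adic topology, then deduce uniqueness from the disjointness of the partition $S = \uplus_i S_i$, and finally read off the two direct sum decompositions as corollaries of existence/uniqueness.

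For existence, I would construct the quotients $g_i$ and remainder $h$ as limits of a recursively defined sequence. Starting from $f^{(0)} = f$, at each step pick the $<$-smallest element $(\beta, k) \in \supp(f^{(k)}) \cap S$ (if any). Since the $S_i$'s partition $S$, there is a unique $i$ with $(\beta, k) \in S_i$, and then necessarily $k = j_i$ and $\beta \geq \alpha^i$ componentwise. Let $c$ be the coefficient of $x^\beta e_k$ in $f^{(k)}$, and set $f^{(k+1)} = f^{(k)} - c\, x^{\beta - \alpha^i} f_i$. This kills the chosen term, and because $f_i - \In(f_i)$ consists only of monomials of strictly larger $L$-weight than $\alpha^i$, every new term introduced has $L$-weight strictly greater than $L \cdot \beta$. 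Since the $<$-smallest $S$-term in $f^{(k)}$ therefore has strictly increasing $L$-weight with $k$, and since positivity of $L$ forces $L \cdot \alpha \to \infty$ to entail $|\alpha| \to \infty$, the partial sums of the quotients $c\, x^{\beta - \alpha^i}$ (indexed by $i$) converge in the $\m$-adic topology to power series $g_i$, and $f^{(k)}$ converges to some $h \in \K[[x]]^p$. By construction $\supp(h) \cap S = \emptyset$, and placing each quotient contribution into its unique $S_i$-slot gives $\supp(g_i) \subseteq p_1(S_i) - \alpha^i$.

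For uniqueness, suppose $\sum_i g_i f_i + h = 0$ with the stated support conditions. Then $h = -\sum_i g_i f_i \in \im(\ell)$, so $\In(h) \in \In(\im(\ell))$, forcing $\supp(\In(h)) \subseteq S$; combined with $\supp(h) \cap S = \emptyset$ this yields $h = 0$. It remains to show $\sum_i g_i f_i = 0$ implies every $g_i = 0$. If some $g_i \neq 0$, take the minimum over those $i$ of $L \cdot (\In(g_i) + \alpha^i)$; each such $\In(g_i f_i) = \In(g_i)\In(f_i)$ lies in $S_i$ by the support condition on $g_i$, so the collection of leading terms (at that minimal $L$-weight) is supported in disjoint sets $S_i$ and cannot cancel, contradicting $\sum g_i f_i = 0$. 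The main obstacle here is being careful that leading terms of $g_i f_i$ really are $\In(g_i)\In(f_i)$ and that the minimum is realized — both follow from positivity of $L$.

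For part 2, the decomposition $\K[[x]]^p = \im(\ell) \oplus \Delta(\ell)$ is immediate: existence gives $f = \ell(g) + h$ with $h \in \Delta(\ell)$, showing the sum spans $\K[[x]]^p$, and if $f \in \im(\ell) \cap \Delta(\ell)$ then on the one hand $(g_i, h) = (0, f)$ is a valid decomposition (from $f \in \Delta(\ell)$) and on the other hand $f = \sum a_i f_i$ reduces via division to a decomposition with $h = 0$ (by the $\In(h) \in S$ vs. $\supp(h) \cap S = \emptyset$ argument), so uniqueness forces $f = 0$. For $\K[[x]]^m = \ker(\ell) \oplus \nabla(\ell)$: given $a \in \K[[x]]^m$, apply division to $\ell(a) \in \im(\ell)$ to produce $(g_1, \ldots, g_m) \in \nabla(\ell)$ with $\ell(g) = \ell(a)$, whence $a = (a - g) + g \in \ker(\ell) + \nabla(\ell)$; and if $g \in \nabla(\ell) \cap \ker(\ell)$ the leading-term argument in the uniqueness step above applied to $\sum g_i f_i = 0$ gives $g = 0$.
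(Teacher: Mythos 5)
The paper offers no proof of this theorem; it simply cites the literature (Grauert, Hironaka, Galligo, Schreyer, Hauser--M\"uller) where the formal division theorem is established. Your proposal is therefore assessed on its own merits, and it does reconstruct the standard division-algorithm argument from those sources.

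The approach is right, and most of the proposal is correct, but there is one recurring imprecision that you should repair. You write that ``$f_i - \In(f_i)$ consists only of monomials of strictly larger $L$-weight than $\alpha^i$.'' That is not what a standard basis gives you: the monomials of $f_i - \In(f_i)$ are strictly larger than $\In(f_i)$ in the monomial order $<$, which allows for equal $L$-weight with a strictly larger lexicographic tie-break. Consequently the claim that ``the $<$-smallest $S$-term in $f^{(k)}$ therefore has strictly increasing $L$-weight with $k$'' is not immediate. The correct statement is that the $<$-smallest $S$-term strictly increases in $<$; one then observes that since $L$ has strictly positive coefficients, each level set $\{\alpha ; L\cdot \alpha \leq N\}$ is finite, so a strictly $<$-increasing sequence must eventually leave any bounded $L$-weight region, giving $L\cdot \beta_k \to \infty$ and hence $\m$-adic convergence of the $g_i$ and of $h$. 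The same imprecision reappears in your uniqueness step, where you minimize over ``$L\cdot(\In(g_i)+\alpha^i)$'' and argue cancellation at that $L$-level: non-leading terms of some $g_j f_j$ can sit at the same $L$-weight and contaminate the picture. You should instead minimize in the order $<$: among all monomials occurring in all $g_i f_i$, the $<$-minimum is $\In(g_{i_0}f_{i_0})$ for a unique $i_0$ (uniqueness by disjointness of the $S_i$), and every other term of every $g_j f_j$ is strictly $<$-larger, so its coefficient in $\sum g_i f_i$ is nonzero. Both fixes are routine and do not change the structure of your proof.

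One further remark, not a fault of yours: the paper's definition $\nabla(\ell)=\{a; \supp(a_i f_i)\subseteq S_i\}$ is inconsistent with part 2 of the theorem (already $m=1$, $f_1 = x_1 + x_2^2$, $a_1=1$ gives $\supp(a_1 f_1)\not\subseteq S_1$, so $\ker(\ell)\oplus\nabla(\ell)\neq\K[[x]]$). The intended definition is $\supp(a_i\,\In(f_i))\subseteq S_i$, equivalently $\supp(a^i)\subseteq p_1(S_i)-\alpha^i$, which is precisely the subspace your argument in part 2 implicitly uses; with that reading your part 2 is correct.
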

\medskip

For a proof of this theorem we refer to \cite{Grauert}, \cite{Hironaka}, \cite{Galligo1}, \cite{Galligo}, \cite{Schreyer} or \cite{HM}. Using the notation of the theorem, a scission $\sigma$ of $\ell$ can be constructed as follows: Denote by $D_\Fst$ the map $$D_\Fst:\im(\ell) \ra \nabla(\ell); f\mapsto (g_1,\ldots, g_m).$$ A projection onto the image of $\ell$ is given by $$\pi:\K[[x]]^p\ra \im(\ell); f\mapsto f- h.$$
Then $\sigma:= D_{\Fst}\circ \pi$ is a scission of $\ell$.\\

If $\Fst$ doesn't form a standard basis, we may choose a (monic) standard basis $\hat \Fst = \{F_1,\ldots, F_l\}$, inducing a $\K[[x]]$-linear map $\rho:\K[[x]]^l \ra \K[[x]]^m$ with $$\rho_j(a)=\sum_{i=1}^{l}\beta_j^i a^i$$ where $F_i=\sum_j\beta_j^i f_j$. Thus $\ell\circ \rho = \hat \ell$, where $\hat \ell:\K[[x]]^l\ra \K[[x]]^p; a=(a^1,\ldots, a^l)\mapsto \sum_i F_ia^i$. By the preceding remarks $\tau:=D_{\hat \Fst}\circ \pi_{\im(\hat \ell)}$ is a scission of $\hat \ell$ and thus 

\begin{equation} \label{scission}
\sigma:=\rho \tau \pi_{\im(\ell)}
\end{equation}

\smallskip

is a scission of $\ell$. We will refer to this scission of $\ell$ as the {\it scission of $\ell$ corresponding to the standard basis $\hat \Fst$}.

\smallskip

\begin{Prop}
Let $\ell$ be a $\K[[x]]$-linear map. Then the scission $\sigma$ of $\ell$ corresponding to a standard basis $\Fst$ of $\im(\ell)$ has degree of contraction $\kappa(\sigma)$ bounded by $$\kappa(\sigma)\geq - \max_{f\in \Fst} \{\w(f)\}.$$
\end{Prop}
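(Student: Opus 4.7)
The plan is to exploit linearity of $\sigma$ to reduce the statement to a pointwise order bound, and then to read that bound off directly from the support conditions in the division theorem. Since $\sigma$ is textile linear, one has $\sigma(a) - \sigma(b) = \sigma(a-b)$, so the required inequality $\kappa(\sigma) \geq -\max_{f \in \Fst}\{\w(f)\}$ is equivalent to
\[
\w(\sigma(a)) \geq \w(a) - \max_{f \in \Fst}\{\w(f)\}
\]
for every nonzero $a \in \K[[x]]^p$. The whole argument will be devoted to establishing this single estimate.

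First I would unpack $\sigma = D_\Fst \circ \pi_{\im(\ell)}$ on a given $a$. Theorem \ref{divisionthm} yields a decomposition $a = g_1 f_1 + \cdots + g_m f_m + h$ with $\supp(g_i) \subseteq p_1(S_i) - \alpha^i$ and $\supp(h) \cap S = \emptyset$. The projection $\pi_{\im(\ell)}$ kills the $\Delta(\ell)$-component $h$, and $D_\Fst$ reads off the quotients, so $\sigma(a) = (g_1, \ldots, g_m) \in \Ce^m$.

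The key step is then to observe that the supports of $g_1 f_1, \ldots, g_m f_m$ and $h$ are pairwise disjoint, since the $S_i$ form a partition of $S$ while $\supp(h)$ lies in the complement of $S$. Consequently no cancellation occurs in $a = \sum g_i f_i + h$, and every index in $\supp(g_i f_i)$ already appears in $\supp(a)$. For each $i$ with $g_i \neq 0$ this forces $\w(g_i f_i) \geq \w(a)$. Since $\K[[x]]$ is an integral domain and the $L$-order is additive on products, $\w(g_i) + \w(f_i) = \w(g_i f_i) \geq \w(a)$, and hence $\w(g_i) \geq \w(a) - \w(f_i) \geq \w(a) - \max_{f \in \Fst}\{\w(f)\}$. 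Taking the minimum over $i$, which by the convention on $L$ from section \ref{firstdefinitions} coincides with $\w(\sigma(a))$, gives the claimed estimate.

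The only mildly delicate point is the disjointness argument that turns the support conditions of Theorem \ref{divisionthm} into the lower bound $\w(g_i f_i) \geq \w(a)$; once that is in hand, the rest is straightforward manipulation of orders. Indices $i$ with $g_i = 0$ contribute nothing, which is why the bound must be controlled by the maximum, rather than the minimum, of the $\w(f_i)$.
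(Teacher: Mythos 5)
Your high-level route is a legitimate alternative to the paper's: rather than working through the iterated Schreyer formula $D_\Fst = \sum_{k\geq 0}\Phi_1^{-1}\circ(\Phi_2\circ\Phi_1^{-1})^k$, you argue directly from the output $a = g_1f_1+\cdots+g_mf_m+h$ of Theorem~\ref{divisionthm}. The reduction to a pointwise bound via linearity and the final order manipulation are fine. But the step you yourself flag as ``mildly delicate'' is where the argument breaks. The division theorem controls $\supp(g_i)\subseteq p_1(S_i)-\alpha^i$, which confines only $\supp(g_i\In(f_i))$ to $S_i$; it says nothing about the tail $g_i\bigl(f_i-\In(f_i)\bigr)$, whose support may fall outside $S_i$, overlap with other $\supp(g_jf_j)$, and cancel. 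So the claim that $\supp(g_1f_1),\ldots,\supp(g_mf_m),\supp(h)$ are pairwise disjoint is false in general, and with it the inference that ``no cancellation occurs'' and that every index in $\supp(g_if_i)$ survives into $\supp(a)$.

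The bound $\w(g_if_i)\geq\w(a)$ is nevertheless correct, but it needs the standard no-leading-cancellation argument rather than global disjointness. Among the $i$ with $g_i\neq 0$, let $\nu$ be the $<$-smallest element of $\bigcup_i\supp(g_if_i)$. It equals the support index of $\In(g_{i_0}f_{i_0})$ for a unique $i_0$ (the initial indices lie in the pairwise disjoint $S_i$), and $\nu\in S_{i_0}\subseteq S$. For $j\neq i_0$ the initial index of $g_jf_j$ lies in the disjoint set $S_j$ and is therefore $<$-strictly larger than $\nu$, so $\nu\notin\supp(g_jf_j)$; and $\nu\notin\supp(h)$ because $\supp(h)\cap S=\emptyset$. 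Hence $a_\nu=(g_{i_0}f_{i_0})_\nu\neq 0$, giving $\w(a)\leq L\cdot\nu=\w(g_{i_0}f_{i_0})=\min_i\w(g_if_i)$, which is exactly the inequality your argument needs. With that fix the rest of your proof goes through. You should also add the reduction the paper performs in its last two lines: when the given generators of $\im(\ell)$ are not themselves a standard basis, the scission is $\sigma=\rho\tau\pi_{\im(\ell)}$, and one observes $\kappa(\rho)\geq 0$ to reduce to the case you treated.
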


\begin{proof}
Consider a linear map $\ell:\K[[x]]^m\ra \K[[x]]^p; a\mapsto \sum_{i=1}^m f_i a^i$ with $\Fst=\{f_1,\ldots, f_m\}$ forming a standard basis of $\im(\ell)$. Define $\sigma$ as the scission $D_\Fst\circ \pi$ of $\ell$ corresponding to $\Fst$. Further, set for $1\leq i \leq m$ $$\Delta_i=\{a\in \K[[x]]; \textnormal{supp}(a)\subseteq p_1(S_i) -\alpha^i \}$$ and $$\bar\Delta=\{a\in \K[[x]]^p; \textnormal{supp}(a)\subseteq \N^{n}\times \{1,\ldots, p\} \setminus S\}.$$ The division map $D_\Fst$ is given by (see \cite{Schreyer})

$$D_\Fst=\sum_{k=0}^\infty \Phi_1^{-1}\circ\left(\Phi_2\circ
\Phi_1^{-1}\right)^k.$$

Here, $\Phi_1$ and $\Phi_2$ are defined as

$$\Phi_1: \left(\oplus_{i=1}^m\Delta_i\right)\oplus\bar\Delta \lra \K[[x]]^p; (g_1, \ldots, g_m, h) \mapsto g_1
x^{ \alpha^1,j_1} + \ldots + g_mx^{\alpha^m,j_m}$$ and $$\Phi_2:\left(\oplus_{i=1}^m\Delta_i\right)\oplus\bar\Delta  \lra
\K[[x]]^p; (g_1, \ldots, g_m, h) \mapsto g_1 u^1 + \ldots + g_pu^p,$$
with $u^i$ given by $\ell^i = x^{\alpha^i, j_i} - u^i$. After a change of the linear form $L$ (see \cite{Schreyer}) we may assume that $\w x^{\alpha^i,j_i} < \w u^i$. Thus, $\kappa(\left(\Phi_2\circ
\Phi_1^{-1}\right)^k)\geq 0$: $\Phi_1^{-1}$ operates on a power series $a\in \K[[x]]^p$ by singling out the $x^{\alpha^i,j_i}$. Hence, $\textnormal{supp} (\Phi_1^{-1}(a))_i \cap p_1(S_i) \subseteq p_1(S_i) - \alpha^i$, $i=1,\ldots,m$. The maximal shift is given by the maximal order of the standard basis $\Fst$. The second map $\Phi_2$ multiplies each component of $\Phi_1^{-1}(a)$ by a $u^i$, which has higher order than $x^{\alpha^i,j_i}$ -- the order of $a$ has increased after applying $\Phi_2\circ \Phi_1^{-1}$, which means positive contraction degree! By section \ref{firstdefinitions} $$\kappa(D_\Fst)\geq \min_k\{\kappa(\Phi_1^{-1}\circ\left(\Phi_2\circ
\Phi_1^{-1}\right)^k)\}.$$
So $\kappa(D_{f_i})=\kappa(\Phi_1^{-1})= - \max_{f \in \Fst} \{\w f\}$. If $\Fst$ is not a standard basis, a scission $\sigma$ is constructed as in equation (\ref{scission}). Then $\kappa(\sigma)\geq \kappa(\rho) + \kappa(\tau) \geq \kappa(\tau)$. For $\tau$ the claim was just proven.
\end{proof}

We finish this section with a special $\K[[x]]$-linear mapping:\\

\begin{Constr}
Let $\ell:\K[[x]]^n\ra\K[[x]]^n$ be given by a matrix $A\in \K[[x]]^{n\times n}$ with $\det A \neq 0 \in \K[[x]]$. In this case it is sufficient to use the Weierstrass division theorem instead of Theorem \ref{divisionthm}. A scission $\sigma$ for $\ell$ will be defined by multiplication with $A^{adj}$, the adjoint matrix of $A$, and then taking the quotient by Weierstrass division. For the last step, recall the Weierstrass division theorem: Let $h \in \K[[x]]$, then there exists a linear change of coordinates $\varphi$ so that $h\varphi$ is $x_n$-regular of order $\w h$. The theorem asserts that for any $g\in \K[[x]]$ there exist {\it unique} power series $B,R \in \K[[x]]$ such that 
$$h\varphi = B\cdot g + R,$$
with $\deg_{x_n} R < \w h$. The series $B\varphi^{-1}$ is called the quotient of $h$  by $g$ using Weierstrass division, and will henceforth be denoted by $Q(h; \varphi, g)$. Note that $Q$ is textile.\\
Let $y=(y_1,\ldots, y_n)^\top$ be defined by $y =A^{adj}\cdot z$ for given $z=(z_1,\ldots, z_n)^\top \in \K[[x]]^n$. In addition assume that $\varphi_i$ are linear coordinate changes such that $y_i\varphi_i$ is $x_n$-regular of order $\w y_i$. Then we define 

$$\sigma(z)=(Q(y_1,\varphi_1, \det A),\ldots, Q(y_n,\varphi_n, \det A)).$$
\end{Constr}

\begin{Prop} \label{genericrankscission}
The map $\sigma$ as defined above is textile and a scission for $\ell$. Moreover, its degree of contraction is 
$$\kappa(\sigma) \geq - \w \det A + \kappa(A^{adj}\cdot (-)).$$
\end{Prop}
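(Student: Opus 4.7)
The plan is to decompose $\sigma$ as the composition of two textile maps — the $\K[[x]]$-linear tactile multiplication $\mu\colon z\mapsto A^{adj}\cdot z$ and the componentwise Weierstrass division $D\colon w\mapsto (Q(w_i;\varphi_i,\det A))_i$ — and then verify the three assertions of the proposition successively. Textility of $\sigma=D\circ\mu$ is then immediate: $\mu$ is tactile (its components are $\K[[x]]$-linear combinations of the entries of $z$), $D$ is textile by the remark closing the preceding Construction, and textile maps are closed under composition.

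The scission identity $\ell\sigma\ell=\ell$ follows by evaluation on an arbitrary $v\in\K[[x]]^n$. Setting $z=\ell(v)=Av$, the classical identity $A^{adj}\cdot A=\det(A)\cdot I$ yields $(A^{adj}z)_i=\det(A)\cdot v_i$; thus the Weierstrass division of $(A^{adj}z)_i$ by $\det A$ admits $v_i$ as an exact quotient with zero remainder, and the uniqueness clause of Weierstrass division (applied after regularizing by $\varphi_i$) forces $Q((A^{adj}z)_i;\varphi_i,\det A)=v_i$. Consequently $\sigma(Av)=v$, whence $\ell\sigma\ell(v)=\ell(v)$.

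The main obstacle is the contraction-degree estimate. Using the additivity formula $\kappa(f\circ g)=\kappa(f)+\kappa(g)$ recorded in section \ref{firstdefinitions}, the bound reduces to establishing $\kappa(D)\geq -\w\det A$; the contribution $\kappa(\mu)=\kappa(A^{adj}\cdot(-))$ then appears automatically as the second summand on the right hand side. To estimate $\kappa(D)$ I would expand the Weierstrass quotient as an iterative reduction process — in the spirit of the series expansion of $D_\Fst$ used in the proof of the preceding proposition — and track how each reduction step shifts supports downward by at most $\w\det A$. Passing to the limit, the $\alpha$-coefficient of $Q(w_i;\varphi_i,\det A)$ is a polynomial in those coefficients of $w_i$ indexed by $\nu$ with $L\cdot\nu\leq L\cdot\alpha+\w\det A$, which by Lemma \ref{Lem:characterize_contractiveness} translates to $\kappa(D)\geq-\w\det A$. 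The delicate point to check is that the order-accounting is unaffected by the regularizing coordinate changes $\varphi_i$, but this is granted since the $\varphi_i$ act linearly on $x$ and therefore preserve the $L$-orders used throughout.
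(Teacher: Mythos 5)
Your proposal is correct and takes the same route the paper's terse proof gestures at: the decomposition $\sigma=D\circ\mu$ with $\sigma\circ\ell=\id$ via $A^{adj}A=(\det A)\cdot I$ is precisely the ``easy calculation,'' and the contraction bound comes from the composition rule of section \ref{firstdefinitions} together with the $\kappa(D)\geq-\w\det A$ estimate for Weierstrass division. One small caution: the paper's composition identity is $\kappa(f\circ g)=\kappa(f|_{g(V)})+\kappa(g)$ rather than $\kappa(f)+\kappa(g)$, but since $\kappa(f|_{g(V)})\geq\kappa(f)$ the inequality you need still follows.
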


\begin{proof}
That $\sigma$ is textile follows from a proof of the Weierstrass Division theorem. An easy calculation shows that it is indeed a scission and the contraction degree follows from the results in section \ref{sectionformalmaps}.
\end{proof}

Note that the last construction differs from the one obtained earlier by means of standard basis. It's advantage lies in the relative easy computation of its degree of contraction -- no estimates on the order of a standard basis of its column module are necessary!

\subsection{Scissions of arbitrary linear textile maps}
For simplicity of notation we restrict our considerations in this section to textile linear maps on $\Aarc$. All results have generalizations to the multivariate case. An arc $a\in \Aarc$ is a vector of infinite (countable) length. In \ref{firstdefinitions} textile linear maps $\Aarc \ra \Aarc$ were identified with matrices of countable many rows and columns, called $\N^2$-matrices. By definition of textile maps, each row contains just finitely many nonzero entries. We say a matrix $S$ is a {\it scission} of a matrix $A$, if $ASA=A$ holds. First consider the case of linear maps between finite dimensional $\K$-vectorspaces. There, linear algebra provides a simple way of constructing scissions: For example let $\ell:\K^3\ra \K^2$ be the $\K$-linear map given by the matrix $$A:=\left(\begin{array}{ccc} 1 & 1 & 1 \\ 1 & 1 & 1 \end{array}\right).$$ According to the Gauss Algorithm there are matrices $P\in \textnormal{Gl}_2(\K)$ and $Q \in \textnormal{Gl}_3(\K)$ such that $A$ is equivalent to $$\tilde A=\left(\begin{array}{ccc} 1 & 0 & 0 \\ 0 & 0 & 0 \end{array}\right),$$ i.e., $PAQ=\tilde A$. $P$ operates on $A$ by elementary row operations, $Q$ by elementary column operations. A scission of $\tilde A$ is given by the matrix
$$\tilde S:=\left(\begin{array}{cc} 1 & 0 \\ 0 & 0\\ 0 & 0 \end{array}\right).$$ Indeed, $\tilde A \tilde S \tilde A = \tilde A$. Thus, the matrix $S:=Q\tilde S P$ is a scission of $\ell$. The same reasoning applies for textile linear maps $\ell: \Aarc \ra \Aarc$. Two textile linear maps $\ell_1, \ell_2:\Aarc \ra \Aarc$ are called {\it equivalent} if there exist linear automorphisms $P, Q$ of $\Aarc$ such that $$P\ell_1 Q = \ell_2.$$ Let $\ell:\Aarc \ra \Aarc$ be a textile linear map. We represent $\ell$ by an $\N^2$-matrix $\left(\ell_{ij}\right)_{ij}$ denoted by $\underline{\ell}$. Since the maps are textile for all $i \in \N$ there exists an $N_i\in  \N$ such that $\ell_{ij}=0$ for $j>N_i$. The submatrix of $\underline{\ell}$ obtained by fixing the first index $i\in \N$ is called the $i$-th row of $\underline{\ell}$. Analogously, the $j$-th column of $\underline{\ell}$ is defined as the submatrix where the second index is fixed to $j$. The $\N^2$-matrix $\underline{\ell}$ is said to be in {\it canonical form} if all rows and columns contain at most one nonzero entry. Denote by $\underline{\epsilon}^{ij}$ the matrix $(\delta_{ik}\delta_{lj})_{kl}$ with $\delta_{kl}$ the Kronecker symbol.\\
Let $(\underline{s}_l)_{l\in\N}$ be a sequence of $\N^2$-matrices. An $\N^2$-matrix $\underline{s}$ is the {\it limit} of $(\underline{s}_l)_l$ if $s$ is the limit of $(s_l)_l$ as textile linear maps (see section \ref{firstdefinitions}). This means that for all $N\in \N$ there exists an $N_0$ such that for all $l\geq N_0$ the $j$-th row of $(\underline{s}-\underline{s_l})$ is zero for $0\leq j \leq N$. In analogy to the notation for power series, we write for the last condition $\w (\underline{s} - \underline{s}_l)> N$.

\medskip

\begin{Thm} \label{canonicalform}
Every textile linear map $\ell:\Aarc \ra \Aarc$ is equivalent to a textile linear map in canonical form. The transformation maps $P,Q$ and the canonical form $\tilde \ell$ are obtained by the algorithm described below.
\end{Thm}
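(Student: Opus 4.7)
The plan is to mimic Gauss elimination on the $\N^2$-matrix $\underline{\ell}$ by processing rows one at a time, alternating row operations (left multiplication by elementary textile matrices) with column operations (right multiplication), and then to argue that the accumulated transformations converge $\m$-adically to textile invertible maps $P$, $Q$ with $P\underline{\ell}Q$ in canonical form.

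The algorithm proceeds inductively. I would maintain the invariant that after step $i-1$ the matrix $\underline{\ell}^{(i-1)}$ has rows $0,\ldots,i-1$ each possessing at most one nonzero entry (normalized to $1$), located in mutually distinct columns $j_0,\ldots,j_{i-1}$, and that these columns likewise each carry at most one nonzero entry. At step $i$ one inspects row $i$. Since column $j_r$ is already clean, the entry $\ell^{(i-1)}_{i,j_r}$ vanishes, so every nonzero entry of row $i$ sits in a fresh column; if the row vanishes, pass. Otherwise pick any such nonzero entry as pivot at position $(i,j_i)$, rescale it to $1$ by a diagonal row operation, then (a) clear the finitely many remaining entries of row $i$ by one column operation per nonzero entry (each a right multiplication by an elementary textile matrix), and (b) clear column $j_i$ in all other rows by a single left multiplication by the matrix $E$ with $E_{ss}=1$ for all $s$ and $E_{s,i}=-\ell^{(i)}_{s,j_i}$.

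A brief verification shows the invariant is preserved: the column operations in step (a) do not disturb rows $r<i$ because $\ell^{(i-1)}_{r,j_i}=0$ by the invariant, nor do they touch the already-finalised columns $j_0,\ldots,j_{i-1}$ because $\ell^{(i-1)}_{i,j_r}=0$; and the row operation in step (b), applied after row $i$ has been reduced to its single entry at $j_i$, zeroes only entries $(s,j_i)$ without introducing new nonzero positions elsewhere. Hence rows $0,\ldots,i$ and columns $j_0,\ldots,j_i$ reach their final shape at step $i$, so $\w(\underline{\ell}^{(i)}-\tilde{\underline{\ell}})\to\infty$ and $\tilde{\underline{\ell}}:=\lim_i\underline{\ell}^{(i)}$ exists as a textile linear map in canonical form.

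The main obstacle is to verify that the infinite products $P=\cdots P_1P_0$ and $Q=Q_0Q_1\cdots$ represent textile invertible maps. Here the same invariant is the key: at step $i$ the left multiplier $E$ replaces row $s$ of the current $P$ by $P_s-\ell^{(i)}_{s,j_i}P_i$, and for $s<i$ this coefficient vanishes, so row $s$ of $P$ stabilises at step $s$. An induction on $s$ then shows that each row of $P$ is a finite $\K$-linear combination of unit rows $e_t$, so has finite support and $P$ is textile. Replacing every elementary $E$ by its inverse (same structure, opposite sign) produces by the identical convergence argument a textile map readily checked to be two-sided inverse to $P$; the argument for $Q$ is symmetric. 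Combining everything gives $P\underline{\ell}Q=\tilde{\underline{\ell}}$ as required.
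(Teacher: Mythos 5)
Your proof deviates from the paper's in one apparently minor but in fact decisive detail: you allow the pivot to be \emph{any} nonzero entry of row $i$ in a fresh column, whereas the paper's algorithm always takes the \emph{rightmost} nonzero entry, $N_i := \max\{j\,:\,\ell^{(i-1)}_{ij} \neq 0\}$. This choice is what makes each column-operation matrix $\underline{Q}_i$ lower triangular (every entry of $\underline{Q}_i - \underline{\id}$ sits at a position $(N_i,k)$ with $k<N_i$). Lower-triangularity of the partial products $Q^{j}=Q_0\cdots Q_j$, together with the fact that the pivot columns $N_0,N_1,\ldots$ are pairwise distinct, is exactly what drives the convergence of $Q^{j}$ to a textile map: row $r$ of $Q^{j}$ can change at step $j$ only if $(Q^{j-1})_{r,N_j}\neq 0$, which for a lower-triangular $Q^{j-1}$ forces $r\geq N_j$; since only finitely many $j$ have $N_j\leq r$, row $r$ eventually stabilises.

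Your final sentence, that ``the argument for $Q$ is symmetric,'' is not true for an arbitrary pivot. The argument for $P$ works because the row operation at step $i$ touches only rows strictly below $i$, so row $s$ of $P$ is frozen after step $s$. The column operation at step $i$, on the other hand, acts on every column where row $i$ has a nonzero entry, and with a non-rightmost pivot these include columns $>j_i$, which destroys the triangular structure. Concretely, take $\ell$ with $\ell_{ii}=\ell_{i,i+1}=1$ for all $i$ and all other entries zero, and choose the leftmost pivot $j_i=i$ at every step. Then $Q_i=\underline{\id}-\underline{\epsilon}^{\,i,i+1}$, and one computes $Q^{j}=Q_0\cdots Q_j=\underline{\id}+\sum_{0\leq k<l\leq j+1}(-1)^{l-k}\underline{\epsilon}^{\,k,l}$, so row $0$ of $Q^{j}$ equals $(1,-1,1,\ldots,(-1)^{j+1},0,\ldots)$ with support $\{0,\ldots,j+1\}$. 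This grows without bound, so the limit would have a row of infinite support and is not textile. The failure is not cosmetic: your algorithm on this $\ell$ outputs $\tilde{\underline{\ell}}=\underline{\id}$, which is injective, yet $\ell$ has the one-dimensional kernel $\{(a_i)\,:\,a_i=(-1)^ia_0\}$, so they cannot be equivalent. The repair is precisely the paper's stipulation: always pivot on the rightmost nonzero entry of the current row, which keeps $\underline{Q}_i$ lower triangular and makes your convergence claim for $Q$ provable.
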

\medskip

\begin{Alg} $\ $

\begin{enumerate}
\item[(i)] $i:=0$, $\underline{\tilde \ell}:=\underline{\ell}$, $\underline{P}:=\underline{\id}$, $\underline{Q}:=\underline{\id}$;

\item[(ii)] Set $N_i :=\max\{j;\ell_{ij}\neq 0\}$, w.l.o.g. the {\it pivot element} $\ell_{i N_i}$ equals $1$;

\item[(iii)] Eliminate the $(l, N_i)$ entries of $\underline{\ell}$ for $l>i$ by applying $$\underline{P}_i=\underline{\id} - \sum_{l=1}^{\infty} \ell_{i+l,N_i}\cdot \underline{\epsilon}^{i+l,i}$$ from the left;

\item[(iv)] Eliminate the $(i,k)$ entries of $\underline{\ell}$ for $k< N_i$ by applying $$\underline{Q}_i :=\underline{\id} - \sum_{j=0}^{N_i-1}\ell_{i,j}\cdot\underline{\epsilon}^{N_i,j}$$ from the right;

\item[(v)] Set $\underline{P}:= \underline{P}_i\circ \underline{P}$, $\underline{Q}:= \underline{Q}\circ \underline{Q}_i$, $\underline{\tilde \ell}:=\underline{P} \circ \ell \circ \underline{Q}$ and $i=i+1$. Return to (ii).
\end{enumerate}
\end{Alg}
\medskip

\begin{Rem}
Note that the constructed $\N^2$-matrices $\underline{P}$ (resp. $\underline{Q}_i$) are lower diagonal $\N^2$-matrices. Hence the corresponding linear maps $P$ (resp. $Q$) are invertible.
\end{Rem}
\smallskip

\begin{Ex}
Consider the difference operator $\Delta = s^d + h_{d-1}s^{d-1} + \ldots + h_0 \in \K[s]$, $s\circ (a_0,a_1,\ldots) = (a_1,\ldots, a_2,\ldots)$, with induced map $\ell: \Aarc \ra \Aarc$, $$a \mapsto \ell(a) = ( h_0a_0 + \cdots + h_{d-1}a_{d-1} + a_d, h_0a_1 + \cdots + h_{d-1}a_d + a_{d+1}, \ldots ),$$

for $a\in \Aarc$. The algorithm yields as the canonical form of $\ell$

$$a\mapsto \tilde \ell (a)= (a_d,a_{d+1}, a_{d+2}, \ldots ,a_{d+i}, \ldots).$$

This implies the well known fact that the set of solutions to the equation $\Delta \circ a = b$ is isomorphic to $\K^d$.
\end{Ex}

\medskip

\begin{proof} 
The finite composition $\underline{P}_j\circ \cdots \circ \underline{P}_0$ is denoted by $\underline{P}^j$ and analogously $\underline{Q}^j:=\underline{Q}_0\circ \cdots \circ \underline{Q}_j$. The sequence $(P^j)_{j\in \N}$ converges to an $\N^2$-matrix $\underline{P}$, since $$\w ( \underline{P_j} - \underline{\id})\geq j.$$ Thus, $\w( \underline{P}^{j+1} - \underline{P}^j) \geq j+1$. All $\underline{P_j}$ are lower diagonal, hence, the same is true for $\underline{P}$. For $(\underline{Q}^j)_j$ it is sufficient to show that for all $l\in \N$ there exists an  $n_l$ such that for all $m,n> n_l$ $$\w (\underline{Q}^m - \underline{Q}^n) > l.$$
Fix an $l\in \N$ and define $n_l$ as a natural number such that all $j\leq l$ appear as the column element of a pivot element $\ell_{rj}, r<n_l$. This gives convergence of $(\underline{Q}^j)_j$. The algorithm obviously transforms $\underline{\ell}$ into its canonical form.
\end{proof}
\medskip

\begin{Constr}  \label{existenceofscissions}
A scission $\sigma$ of a $\K$-linear map $\ell: \Aarc \ra \Aarc$ can be constructed as follows:
First one reduces to the case of linear maps in canonical form. Assume a scission $\sigma_{\tilde \ell}$ is constructed for the canonical form $\tilde \ell$ of $\ell$. A scission of $\ell$ is obtained by $\sigma_\ell := Q\sigma_{\tilde \ell} P$, where $P$, $Q$ are automorphisms of $\Aarc$ with $P\ell Q = \tilde \ell$.\\
Thus, without loss of generality $\ell$ has matrix representation $\underline{\ell}$ in canonical form. Denote by $I_1$ the set of indices $i\in \N$ s.t. the $i$-th row of $\underline{\ell}$ is nonzero and by $I_2$ the set of indices $j\in \N$ such that the $j$-th column is nonzero. Then $$\im(\ell)=\{\gamma \in \K^\N; \gamma_k=0 \ \ \textnormal{for all} \ \ k\in \N\setminus I_1\}$$ and $$\ker(\ell)=\{\gamma\in \K^\N; \gamma_k = 0 \ \ \textnormal{for all} \ \ k \in I_2\}.$$ Then $(\ell|_\Larc)^{-1} \circ \pi$ is a scission of $\ell$, where $\pi:\K^\N \ra  \im(\ell)$ is the natural projection and $\Larc:=\{\gamma \in \Aarc; \gamma_k = 0 \ \ \textnormal{for} \ \ k\in \N\setminus I_2\}$ is a direct complement of $\ker(\ell)$ as a $\K$-vector space.
\end{Constr}

\bigskip




\section{The Rank Theorem} \label{sectionrankthm}

Consider a textile map $f\colon \Ce^m \ra \Ce^p$ with tangent map $\ell=T_{a_0}f$ at $a_0$. Our purpose is to linearize  $f$ locally at $a_0$  by applying local automorphisms of source and target. 
This is obtained by generalizing the Rank Theorem in \cite{HM} (proven there for tactile maps) in three directions: We allow textile maps, we give a version with parameters, and also a statement with nilpotent coefficients. The main argument of the proof from \cite{HM} is recalled and extended to textile maps in section \ref{sec:Rank_Theorem_Without_Parameters}. We introduce textile maps depending on parameters in \ref{sec:Parametric_Rank_Theorem} and prove the corresponding theorem. The nilpotent case is treated in section \ref{sec:RankTheorem_for_TestRings}.\\

\subsection{The Rank Theorem for textile maps}\label{sec:Rank_Theorem_Without_Parameters}

We start with some technical preliminaries. The notation used here is abutted to the analytic case treated in \cite{HM}. A textile map $\gamma: U \subseteq \Ce \ra \Ce^m$ is called a {\it curve} (over $U$) in $\Ce^m$. The argument of $\gamma$ is called the parameter of the curve and is mostly denoted by $s$. For an arbitrary subvector space $J \subseteq \Ce^m$ a textile map $\gamma\colon U\subseteq \Ce\ra \Ce^m$ is called a {\it curve in $J$} if $\im(\gamma)\subseteq J$. The set of all curves in $J$ is denoted by $\Gamma(J)$. The tangent space $T_aJ$ of $J$ at $a \in J$ is defined as the $\K$-vector space with elements $$\p\gamma(0),$$ for a curve $\gamma$ in $J$ with $\gamma(0)=a$. Obviously $\textnormal{T}_a\Ce^m$ can be identified with $\Ce^m$ for any $a \in \Ce^m$. Thus, a curve in the tangent bundle of $\Ce^m$ is given by a pair of curves $(\gamma, b) \in \Gamma(\Ce^m)^2$.\\

A textile map $f:\Ce^m \ra \Ce^p$ induces an appropriate tangent map $Tf: \textnormal{T}\Ce^m \ra \textnormal{T}\Ce^p$ and further by composition a map $$\Gamma(\textnormal{T}\Ce^m) \ra \Gamma(\textnormal{T}\Ce^p); (\gamma, b) \mapsto (f\gamma, T_\gamma f\cdot b).$$
In the case of tactiles $T_\gamma f\cdot b$ is given by $$T_\gamma f\cdot b = \sum_{i=1}^n\left(\p_i f\right)(\gamma(s))\cdot b^i(s).$$

Let $U\subseteq \Ce^m$ be an $\m$-adic neighbourhood of $a_0 \in \Ce^m$. Assume that the image $\im(T_{a_0}f)$ has (as a topological vector space)  a direct complement $J$, i.e.  $$\im(T_{a_0}f)\oplus J = \Ce^p.$$  Moreover let $\gamma$ be a curve in $U$ with $\gamma(0)=a_0$; then we say $f$ has {\it constant rank at $a_0$ with respect to $J$} if for all curves of the form $(f\gamma, \eta)$ in $\textnormal{T}\Ce^p$ there exist unique curves $\rho$ and $\tau$ such that  $(\gamma, \rho)$ defines a curve in $\textnormal{T}U$ and $\tau \in \Gamma(\textnormal{T}J)$ with the property 

\begin{equation} \label{rankcondition}
\eta = T_\gamma f \cdot \rho + \tau.
\end{equation}
\medskip

A textile map $f:U\subseteq \Ce^m \ra \Ce^p$ is called {\it flat at $a_0$} if for
all curves $\gamma$ in $U$ with $\gamma(0)=a_0$ the evaluation at $s=0$ defines a surjective map
$$\ker(T_\gamma f: \textnormal{T}_\gamma U\ra \textnormal{T}_{f\gamma} \Ce^p) \lra \ker(T_{a_0}f: \textnormal{T}_{a_0}U \ra \textnormal{T}_{f(a_0)} \Ce^p).$$

\medskip

Similar to [HM] one proves the following proposition (see \cite{bruschek}):

\begin{Prop}\label{flatisrank}
Let $f:U\subseteq \Ce^m \ra \Ce^p$ be a textile map of type $f=\ell + h$. Assume there exists a scission of $\ell$, such that $\sigma h$ is contractive. Then $f$ has constant rank at $0$ if and only if $f$ is flat at $0$.
\end{Prop}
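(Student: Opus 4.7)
The plan is to construct, for each curve $\gamma$ in $U$ with $\gamma(0)=0$, a parametric scission $\sigma_\gamma$ of the tangent map $T_\gamma f=\ell+T_\gamma h$, depending textilely on the curve parameter and specializing to $\sigma$ at $\gamma=0$. With $\sigma_\gamma$ in hand, both ``constant rank'' and ``flat'' become direct statements about the decompositions of $\Ce^p$ and of the kernel bundle induced by $\sigma_\gamma$.

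First I would check that contractivity passes to the tangent map: from the contractivity of $\sigma h$ together with the Taylor expansion of $h$ around $\gamma(s)$, the composition $\sigma T_\gamma h$ remains contractive uniformly in the curve parameter. The Inverse Function Theorem \ref{inversemappingthm} then inverts $\id+\sigma T_\gamma h$ on the subspace $\im(\sigma)\subseteq\Ce^m$; call this inverse $\Psi_\gamma$ and set $\sigma_\gamma:=\Psi_\gamma\circ\sigma$. A direct computation using the scission identities $\ell\sigma\ell=\ell$ and $(\sigma\ell)|_{\im\sigma}=\id$ yields $\sigma_\gamma\circ T_\gamma f|_{\im\sigma}=\id|_{\im\sigma}$, so $T_\gamma f$ sends $\im(\sigma)$ bijectively onto $T_\gamma f(\im\sigma)$, and $\ker\ell$ remains a direct complement of $\im(\sigma)$ in $\Ce^m$.

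For the implication \emph{flat} $\Rightarrow$ \emph{constant rank}, given curves $\gamma,\eta$ I take $\rho:=\sigma_\gamma\eta\in\Gamma(\im\sigma)$ and $\tau:=\eta-T_\gamma f\rho$; the relation $\ell\sigma\ell=\ell$ together with the construction of $\sigma_\gamma$ forces $\tau\in\Gamma(J)$ for $J=\ker(\ell\sigma)$, giving existence of the decomposition $\eta=T_\gamma f\rho+\tau$. For the uniqueness asserted in the constant-rank definition (read with the $\rho$-component normalized to lie in the canonical complement $\Gamma(\im\sigma)$ of $\Gamma(\ker\ell)$), any second decomposition differs by a pair $(\rho',\tau')\in\Gamma(\im\sigma)\times\Gamma(J)$ with $T_\gamma f\rho'+\tau'=0$; applying $\sigma_\gamma$ collapses this to $\rho'=-\sigma_\gamma\tau'$, and flatness is invoked to conclude $\tau'=0$ by forcing the fibrewise discrepancy to vanish already at $s=0$. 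For the converse \emph{constant rank} $\Rightarrow$ \emph{flat}, I start with $v_0\in\ker\ell$ and a curve $\gamma$ with $\gamma(0)=0$, view $v_0$ as a constant curve, and apply constant rank to $\eta:=T_\gamma f\cdot v_0$: this produces a unique $(\rho,\tau)\in\Gamma(\im\sigma)\times\Gamma(J)$ with $\eta=T_\gamma f\rho+\tau$. Since the decomposition $(v_0,0)$ coincides with $(\rho,\tau)$ modulo $\Gamma(\ker\ell)$, one gets $\tau=0$ and $\rho-v_0\in\Gamma(\ker T_\gamma f)$, whose value at $s=0$ is (up to sign) the required lift of $v_0$.

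The step I expect to be the main obstacle is the first one: deriving a uniform-in-$s$ contractivity estimate for $\sigma T_\gamma h$ from the pointwise contractivity of $\sigma h$, since $T_\gamma h$ arises by differentiation at a varying cord $\gamma(s)$. The coefficientwise characterization of contractiveness in Lemma \ref{Lem:characterize_contractiveness} should supply the necessary bookkeeping: if the $\alpha$-coefficient of $\sigma h$ is polynomial in cord-coefficients of bounded index, the same remains true for $\sigma T_\gamma h$ applied to a direction, with the shift preserved. This is the textile counterpart of the analytic estimate carried out in \cite{HM}.
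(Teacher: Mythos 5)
The paper does not prove Proposition \ref{flatisrank}; it points to \cite{HM} and to \cite{bruschek}, so there is no in-text argument to compare against. Your plan --- building a parametric left-inverse $\sigma_\gamma$ of $T_\gamma f$ by inverting $\id+\sigma T_\gamma h$ via Theorem \ref{inversemappingthm} and reading both ``constant rank'' and ``flat'' off the resulting decompositions --- is the right kind of machinery and is in the spirit of the reference. Two concrete problems need attention.

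First, the identity $(\sigma\ell)|_{\im\sigma}=\id$ that you invoke as a ``scission identity'' is not one. From $\ell\sigma\ell=\ell$ one only gets that $\sigma\ell$ is an idempotent with image $\im(\sigma\ell)\subseteq\im\sigma$; the inclusion is in general strict (e.g.\ $\ell=0$, $\sigma=\id$). To make $\sigma\ell$ the identity on all of $\im\sigma$ you need the extra normalisation $\sigma\ell\sigma=\sigma$. This can always be arranged by replacing $\sigma$ with $\sigma\ell\sigma$ (the kernel of $\ell\sigma$, hence $J$, is unchanged), but then one must re-check that the new $\sigma h$ is still contractive; alternatively, run the whole construction on $\im(\sigma\ell)$ instead of $\im\sigma$ and keep track that $\sigma T_\gamma h$ lands in $\im\sigma$, not $\im(\sigma\ell)$.

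Second, and more substantively, the way flatness enters your argument is not coherent. If you read the constant-rank condition with $\rho$ normalised to lie in the complement $\Gamma(\im(\sigma\ell))$ of $\Gamma(\ker\ell)$ --- as you propose --- then your own computation already gives existence \emph{and} uniqueness of $(\rho,\tau)$ without any appeal to flatness: applying $\sigma_\gamma$ kills $\tau'$ because $\sigma\tau'\in\ker\ell\cap\im(\sigma\ell)=0$, forcing $\rho'=0$. Under that reading the constant-rank condition would hold automatically whenever the order condition does, and the proposition would be vacuous. The content must therefore lie in the unrestricted reading (uniqueness of $\tau$ while $\rho$ ranges over all of $\Gamma(\Ce^m)$), i.e.\ in the nontrivial statement $T_\gamma f\bigl(\Gamma(\Ce^m)\bigr)\cap\Gamma(J)=0$, and this is precisely where flatness is needed. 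Your sketch only says ``flatness is invoked \ldots by forcing the fibrewise discrepancy to vanish already at $s=0$,'' but vanishing at $s=0$ only gains one power of $s$; one has to iterate this gain together with a convergence/completeness argument (using the $s$-adic topology on the curve module and the contractivity bound) to conclude $\tau''=0$. That iteration is the actual proof of the implication ``flat $\Rightarrow$ constant rank'' and it is missing from the write-up. In the converse direction your idea is sound, but state explicitly that $\tau=0$ follows because $(v_0,0)$ is a competing decomposition of $\eta=T_\gamma f\cdot v_0$ and uniqueness of $\tau$ identifies it with your $\tau$, and that $\rho(0)\in\im(\sigma\ell)\cap\ker\ell=0$ follows by evaluating the decomposition at $s=0$; only then does $(v_0-\rho)(0)=v_0$.
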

\smallskip

\begin{Rem}
The contractivity condition on $\sigma$ and $h$ is necessary to conclude that flatness means constant rank. The other direction doesn't need the construction of a scission of $\ell$. A typical class of flat maps is given by maps with injective tangent map.
\end{Rem}

\begin{samepage}
\begin{Thm}[Rank Theorem] \label{rankthm}
Let $f: U \subseteq \Ce^m \ra \Ce^p$ be a textile map on an $\m$-adic neighbourhood $U$ of $0$ with $f(0)=0$. Write $f$ in the form
$f=\ell + h$ with $\ell=T_0f$ the tangent map of $f$ at $0$. Assume that $\ell$ admits a scission
$\sigma$ for which $\sigma h$ is contractive on $U$, and that $f$ has constant rank at $0$
with respect to $\ker(\ell\sigma)$. Then there exist locally invertible textile
maps $u:U\ra \Ce^m$ and $v:f(U) \ra \Ce^p$ at $0$ such that
$$vfu^{-1} = \ell.$$
\end{Thm}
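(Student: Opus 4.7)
The plan is to mimic the classical finite‑dimensional proof of the rank theorem, using the scission $\sigma$ to play the role of coordinates in the source and target that respect the decompositions
$$\Ce^m=\ker(\ell)\oplus\im(\sigma\ell),\qquad \Ce^p=\im(\ell)\oplus\ker(\ell\sigma).$$
The idea is to first straighten the source so that $f$ becomes affine in the $\im(\ell)$--direction, and then to peel off the remaining tail in the $\ker(\ell\sigma)$--direction by a target change of variables.

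First I would set
$$u := \id_{\Ce^m}+\sigma h\colon U\lra \Ce^m.$$
Since $\sigma h$ is contractive on $U$, Theorem~\ref{inversemappingthm} applies with the invertible textile linear part $\id_{\Ce^m}$, so $u$ is a locally invertible textile map at $0$, and $u^{-1}$ is textile. A direct computation using $\ell\sigma\ell=\ell$ yields
$$\ell\sigma\circ f \;=\; \ell\sigma\ell + \ell\sigma h \;=\; \ell + \ell\sigma h \;=\; \ell\circ u,$$
hence, writing $F:=f\circ u^{-1}$, one has the clean identity $\ell\sigma\circ F=\ell$. Consequently
$$F \;=\; \ell + G,\qquad G:=(\id-\ell\sigma)\circ F\colon u(U)\lra \ker(\ell\sigma),$$
and $G$ vanishes to positive order at $0$ (because $\sigma h$ does, and $F-\ell=(\id-\ell\sigma)h\circ u^{-1}$).

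The crucial step is then to show that $G$ factors through $\ell$, i.e.\ that there is a textile map $k\colon \im(\ell)\to\ker(\ell\sigma)$ with $k(0)=0$, $T_{0}k=0$, and
$$G \;=\; k\circ\ell\qquad\text{on a neighbourhood of }0.$$
This is where the constant rank hypothesis enters essentially. The argument, following [HM], is to exploit the characterisation in Proposition~\ref{flatisrank}: flatness of $f$ at $0$ (equivalent to constant rank here) says that every curve in $\ker(\ell)\subseteq T_{0}\Ce^m$ lifts to a curve in the kernel of $T_\gamma f$ for any curve $\gamma$ through $0$; translated under the change of variables $u$, this means that along any curve in $\ker(\ell)$ through a point of $u(U)$ the directional derivative of $F$ lies in $\im(T_{u^{-1}(\cdot)}f)$, which has trivial intersection with $\ker(\ell\sigma)$. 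Projecting by $\id-\ell\sigma$ kills the derivative, so $G$ is constant along translates in $\ker(\ell)$; by the Taylor expansion of textile maps (section~\ref{differentialcalc}) this constancy propagates order by order, producing the desired textile factorisation $G=k\circ\ell$.

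Given $k$, one defines the target change of variables
$$v \;:=\; \id_{\Ce^p}-k\circ\ell\sigma\colon\, f(U)\lra \Ce^p.$$
Since $k$ vanishes to positive order and $\ell\sigma(k\circ\ell\sigma)=0$ (because $\im(k)\subseteq\ker(\ell\sigma)$), the composition $k\circ\ell\sigma$ is contractive, and another application of Theorem~\ref{inversemappingthm} shows that $v$ is a locally invertible textile map at $0$, with inverse $\id+k\circ\ell\sigma$. Finally, for any $b\in u(U)$,
$$v(F(b))\;=\;F(b)-k(\ell\sigma F(b))\;=\;\ell b+G(b)-k(\ell b)\;=\;\ell b,$$
so $v\circ f\circ u^{-1}=\ell$ as required. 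The main obstacle, as expected, is the factorisation $G=k\circ\ell$: once this is in hand, the construction of $u$ and $v$ and the verification are essentially formal consequences of the inverse mapping theorem together with contractivity estimates coming from the scission.
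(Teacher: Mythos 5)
Your proof follows essentially the same route as the paper's: the source change of variables $u=\id+\sigma h$ is identical, you derive the same identity $\ell\sigma\circ(f\circ u^{-1})=\ell$, and the heart of both arguments is the same use of the constant--rank hypothesis to kill the $\ker(\ell\sigma)$--component of the directional derivatives of $f$ along $\ker(\ell)$. The only cosmetic difference is in how the target automorphism $v$ is packaged: the paper writes it down explicitly as $v=\id-(\id-\ell\sigma)f\sigma\ell\sigma$ before invoking the rank condition, and then verifies $vf=\ell$ from the identity $f=f\sigma\ell$; you instead first extract a factorisation $G=k\circ\ell$ and then set $v=\id-k\circ\ell\sigma$. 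These coincide upon taking $k=(\id-\ell\sigma)F\sigma$, so the two presentations are equivalent, and your ``main obstacle'' --- passing from $T_aF|_{\ker\ell}=0$ to the functional factorisation $G=k\circ\ell$ --- is exactly the step the paper handles by the curve $\gamma(s)=s\cdot a$ and the uniqueness clause of the rank condition. It would be worth spelling out that ``propagates order by order'' step with the explicit curve argument, as the paper does; otherwise the sketch is faithful to the original proof.
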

\end{samepage}

\begin{proof}
The proof naturally falls into two parts. First construct two
invertible textile maps $u,v$; second show that these maps indeed
linearize $f$. This will be ensured by the rank condition. Set $u=
\id_{\Ce^m} + \sigma h$. Obviously $u(0)=0$ and $T_0 u = \id$. By
contractiveness of $\sigma h$ on $U$ this $u$ is invertible on $U$. Note:
$$\ell \sigma f = \ell \sigma (\ell + h) = \ell + \ell \sigma h =
\ell (\id + \sigma h) = \ell u.$$ Thus, $fu^{-1}$ fulfills $\ell \sigma f u^{-1} = \ell$.
 Without loss of generality one can assume for the rest of the proof that $\ell \sigma f = \ell$.
 The second map $v$ is defined by $v= \id_{\Ce^p} - (\id_{\Ce^p} -
\ell\sigma)f\sigma \ell \sigma$. Again: $v(0)=0$ and $v$ is invertible with inverse $v^{-1}=\id_{\Ce^p} +
(\id_{\Ce^p} - \ell \sigma)f\sigma \ell \sigma$. For later use we write 

\begin{eqnarray*}
v & = & \id - f\sigma \ell \sigma + \ell  \sigma f \sigma \ell
\sigma\\
& = & \id - f\sigma \ell \sigma + \ell \sigma\\
& = & \id - h \sigma,
\end{eqnarray*}

with $\kappa(h\sigma)>0$. Note: The diffeomorphisms $u$ and $v$ are constructed without using the rank condition at all. Assume $f$ fulfills the equation $(*)$: $f = f\sigma \ell$. This would imply that

\begin{eqnarray*}
vf & = & f - (\id - \ell \sigma) f \sigma \ell \sigma f\\
& = & f - (\id - \ell \sigma) f\sigma \ell\\
& = & f - (\id - \ell \sigma)f\\
& = &\ell \sigma f\\
& = & \ell.
\end{eqnarray*}
\smallskip

Equation $(*)$ is a consequence of the Rank Condition. The
composition $\sigma\ell$ is a projection onto the complement of $\ker(\ell)$ in $\Ce^m$. It is sufficient to show that $T_a f | _{\ker(\ell)}=0$ for
all $a\in \Ce^m$ near $0$. From $\ell \sigma f = \ell$ follows $f =
\ell + (\id_{\Ce^p} - \ell \sigma)f$ and subsequently $$T_a f = \ell
+ (\id_{\Ce^p} - \ell \sigma)T_a f.$$ Let $a$ be fixed in $U$. Define a
curve $\gamma(s):= s\cdot a$. For $b \in \ker(\ell)$
$$T_{\gamma(s)} f\cdot b = (\id_{\Ce^p} - \ell \sigma)T_{\gamma(s)}f \cdot
b$$ holds. By uniqueness of the decomposition in equation (\ref{rankcondition})

$$T_{\gamma(s)} f \cdot b=0$$ and thus $T_a f| _{\ker(\ell)}=0$.
The assertion follows.
\end{proof}

A textile map $f=\ell + h$ is called a {\it quasi-submersion} if $$\im(h)\subseteq \im(\ell).$$ Especially, if $\ell$ is surjective, $f$ is a {\it submersion}. A quasi-submersive map $f$ is linearizable if the {\it order condition} (i.e., there exists a scission $\sigma$ for $\ell$ such that $\kappa(\sigma h)>0$) is fulfilled. Indeed, by construction of $v$ in the proof of Theorem \ref{rankthm} we see that in the case of quasi-submersive $f$ this map $v$ equals the identity. Therefore, checking the Rank Condition becomes obsolete. Quasi-submersions play an important role in the applications in section \ref{sec:Applications}. In fact, the only application in section \ref{sec:Applications} where quasi-submersions do not appear can be found in section \ref{sectionmirlamel} (in that case the tangent map is injective).

\begin{Ex}
  We give an example of a textile map which is neither a quasi-submersion nor has injective tangent map (but fulfills the order condition):
  $$\m\cdot \K[[x,y]]^2\ra \m^3\subseteq \K[[x,y]]; (a,b)\mapsto xya + y^2b + x^2ab.$$
\end{Ex}

{\it Some simplification}: The Rank Condition is technical and
may be hard to check. In some applications the following simpler
condition is sufficient. Let $f: U\subseteq \Ce^m \ra \Ce^p$ be a textile map. Denote by $J$ a
direct complement of $\im(T_0f)$. Then $f$ is said to have {\it
pointwise constant rank at $0$ with respect to $J$} if for all $a\in U$
$$\im(T_af)\oplus J =\Ce^p.$$

\begin{Prop} \label{pointwiserank}
Let $f: U\subseteq \Ce^m \ra \Ce^p$ be a tactile, $U$ an $\m$-adic
neighbourhood of $0$. If for all $a\in U$ $$\In(\im (T_af))= \In(\im (T_0f))$$
then $f$ has pointwise constant rank at $0$ with respect to $$\Delta(T_0f)=\{
b\in \Ce^p; \textnormal{supp}(b)\cap\In(\im(T_0f))=\emptyset\}.$$ 
\end{Prop}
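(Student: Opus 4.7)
The strategy is to reduce to two applications of the Grauert-Hironaka-Galligo Division Theorem (Theorem \ref{divisionthm}), one for $T_0f$ and one for $T_af$, and then to exploit the fact that the complementary space $\Delta(\ell)$ of the image of a $\K[[x]]$-linear map $\ell$, as defined in Section \ref{sectionscission1}, depends only on the initial module $\In(\im(\ell))$ (and on the chosen monomial order).

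First I would note that because $f$ is tactile, induced say by $g\in\K[[y_1,\ldots,y_m]]^p$, the tangent map $T_af\colon \Ce^m\to \Ce^p$ is actually $\K[[x]]$-linear: concretely $T_af\cdot v = \sum_{i=1}^m (\p_i g)(a)\cdot v^i$, so $\im(T_af)$ is the finitely generated $\K[[x]]$-submodule of $\Ce^p$ generated by the vectors $(\p_i g)(a)$, $i=1,\ldots,m$. Choose a monic standard basis $\hat{\Fst}_0$ of $\im(T_0f)$. By Theorem \ref{divisionthm}, we obtain the direct sum decomposition
$$\Ce^p = \im(T_0f)\oplus \Delta(T_0f).$$
Now fix $a\in U$ and pick, in the same way, a monic standard basis $\hat{\Fst}_a$ of $\im(T_af)$. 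A second application of Theorem \ref{divisionthm} yields
$$\Ce^p = \im(T_af)\oplus \Delta(T_af).$$
By hypothesis $\In(\im(T_af))=\In(\im(T_0f))$, and since $\Delta(\ell)$ is defined purely in terms of $\supp(\In(\im(\ell)))$, we conclude $\Delta(T_af)=\Delta(T_0f)$. Substituting gives precisely $\im(T_af)\oplus \Delta(T_0f)=\Ce^p$, which is the pointwise constant rank condition at $0$ with respect to $\Delta(T_0f)$.

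The step that requires the most care is to justify the use of Theorem \ref{divisionthm} for the module $\im(T_af)$ at an arbitrary $a\in U$; the key is that $T_af$ is $\K[[x]]$-linear (a feature of tactile maps) so its image is indeed a $\K[[x]]$-submodule admitting a standard basis, whereas for a general textile $f$ the tangent map would only be $\K$-linear and no standard basis machinery would apply. Once that is in place, the proof is immediate, since the definition of $\Delta(\ell)$ is tailor-made to depend only on $\In(\im(\ell))$.
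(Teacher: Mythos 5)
Your argument is correct, and it is exactly what the paper's framework intends: the proposition is stated without proof precisely because it is the direct consequence of Theorem~\ref{divisionthm} that you spell out. You correctly identify the two points that make the argument go through, namely that a tactile $f$ has $\K[[x]]$-linear tangent maps $T_af$ (so their images are finitely generated $\K[[x]]$-submodules admitting standard bases), and that $\Delta(\ell)$ depends only on $\supp(\In(\im(\ell)))$, so the hypothesis $\In(\im(T_af))=\In(\im(T_0f))$ forces $\Delta(T_af)=\Delta(T_0f)$ and hence $\im(T_af)\oplus\Delta(T_0f)=\Ce^p$.
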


\medskip

\subsection{The Parametric Rank Theorem}\label{sec:Parametric_Rank_Theorem}

The Rank Theorem ensures linearization of textile maps $f: U\subseteq \Ce^m \ra \Ce^p$ of constant rank near a fixed cord $c\in U$. The Parametric Rank Theorem will deal with families of textile maps, thus providing a tool to trivialize vector bundles (see section \ref{sectionarcspace}).\\

Let $\Ss \subseteq \A_\K^q$ be constructible and $Y\subseteq \Ce^m$ a subfelt. A map 

$$f_\Ss: \Ss \times Y \ra \K; (s,c) \mapsto  f_{\Ss}(s,c)$$

is called a {\it regular $\K$-valued family of functions over \Ss} if there is a function $g:U\times Y \ra \K$, where $U \subseteq \A^q_\K$ containing $\Ss$ is a subvariety and $g$ locally looks like a quotient $\frac{h}{l}$ of $h,l \in \K[s_1,\ldots, s_q]\otimes_\K \K[\underline{\N}_m^{n}]$, $l$ locally non-zero, such that $g|_{\Ss\times Y} = f_\Ss$. More generally we define a {\it regular family of textile maps over $\Ss$ on $Y$} as a map

$$f_{\Ss}: \Ss\times Y \ra \Ce^p; (s,c) \mapsto  \left(f_{\Ss,\alpha}(s,c)\right)_{\alpha \in \N^{n+1}} $$ 

where each $f_{\Ss,\alpha}$ is a regular $\K$-valued family over $\Ss$. Thus for fixed $s = (s_1,\ldots, s_q) \in \Ss$ the restricted map 

$$f_s:=f_{\Ss}(s,-): Y \ra \Ce^p$$ 

is a (regular) textile map. The $s_i$ will be referred to as {\it parameters} and $\Ss$ as the {\it parameter space}. If $f_\Ss$ is a family such that all $f_s$ are linear maps, we call it a {\it linear family over $\Ss$}. As a special case the family $\id_\Ss: \Ss\times Y \ra Y$, where $\id_s = \id$ for all $s\in \Ss$, appears.\\

If $f_\Ss: \Ss\times \Ce^m \ra \Ce^p$ and $g_\Ss: \Ss\times \Ce^q \ra \Ce^m$ are two regular families we define their composition as the family $\Ss \times \Ce^q \ra \Ce^p$ given by
$$f_\Ss \circ g_\Ss:= f_\Ss \circ (proj_1,g_\Ss),$$
where $proj_1: \Ss\times \Ce^q \ra \Ss$ is the projection onto the first factor. Thus $(f_\Ss \circ g_\Ss)_s = f_s\circ g_s$. A regular family $f_\Ss: \Ss \times \Ce^m \ra \Ce^p$ is called {\it invertible} if there exists a {\it regular} family $g_\Ss: \Ss \times \Ce^p \ra \Ce^m$ over $\Ss$ with $f_\Ss \circ g_\Ss=\id$.\\

\begin{Ex}
Consider 

$$\ell_{\A_\K}: \A_\K \times \Aarc \ra \Aarc; (s,c) \mapsto (0,s c_0, sc_1,\ldots).$$ Obviously $\ell_{\A_\K}$ is a regular family. The family $\sigma_{\Ss}$, $\Ss=\A_\K\setminus \{0\}$, defined by

$$\sigma_{\Ss}: \Ss\times \m\cdot \Aarc \ra \Aarc; c \mapsto \frac{1}{s}\cdot (c_1,c_2,\ldots)$$

is regular with (right) inverse $\ell|_\Ss$. Thus $\sigma_{\Ss}$ is an invertible family.
\end{Ex}

The defining property for contractive textile maps naturally generalizes to families of textile maps over $\Ss$. Let $U=\m^l\cdot \Ce^m$ be an $\m$-adic neighbourhood of $0$. A family $f_\Ss: \Ss\times U \ra \Ce^p$ is called {\it contractive on $U$} if it fulfills condition (\ref{contraction}) independently of $\Ss$, or equivalently, it fulfills (\ref{contraction}) pointwise: $f_s$ is contractive on $U$ for all $s\in \Ss$.

\begin{Thm} \label{inversemappingthm_parametric}
Let $f_\Ss:\Ss \times U \ra \Ce^m$, $U=\m^l\cdot \Ce^m$, an $\m$-adic neighbourhood of $0$, be a regular family over $\Ss$ with $f_\Ss(-,0)=0$. Assume $f_\Ss=\ell_\Ss + h_\Ss$ with $\ell_\Ss:\Ss\times U\ra W$ linear with inverse $\tilde \ell_\Ss: \Ss\times W\ra U$. If $\tilde \ell_\Ss\circ h_\Ss $ is contractive on $U$, then $f_\Ss$ admits on $U$ an (right) inverse, i.e., there is a regular family $g_\Ss:\Ss\times f(U) \ra \Ce^m$ with $f_\Ss \circ g_\Ss =\id_\Ss$. 
\end{Thm}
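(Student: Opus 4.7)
The plan is to adapt the proof of the non-parametric Inverse Mapping Theorem \ref{inversemappingthm} essentially verbatim, with the sole additional task of checking that the construction produces a \emph{regular} family in the parameter $s\in \Ss$, as opposed to merely a family of textile maps.

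First, as in the proof of Theorem \ref{inversemappingthm}, I would reduce to the case $f_\Ss=\id+h_\Ss$ with $h_\Ss$ contractive on $U$ by replacing $f_\Ss$ with $\tilde\ell_\Ss\circ f_\Ss=\id+\tilde\ell_\Ss\circ h_\Ss$. Since composition of regular families was defined in the excerpt via $f_\Ss\circ g_\Ss:=f_\Ss\circ(\mathrm{proj}_1,g_\Ss)$, and since regularity of the coefficient polynomials is preserved under substitution, $\tilde\ell_\Ss\circ h_\Ss$ is again a regular family. If a right inverse $\tilde g_\Ss$ is found for $\id+\tilde\ell_\Ss h_\Ss$, then $g_\Ss:=\tilde g_\Ss\circ\tilde\ell_\Ss$ will be a regular right inverse of the original $f_\Ss$.

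Next, I would recursively define a sequence of regular families $g_{\Ss,j}\colon\Ss\times U\to\Ce^m$ by $g_{\Ss,0}=0$ and $g_{\Ss,j+1}=\id-h_\Ss\circ g_{\Ss,j}$. Each $g_{\Ss,j}$ is obtained by a finite number of compositions and subtractions of regular families, hence is regular. For each fixed $s\in\Ss$, contractivity of $h_s=h_\Ss(s,-)$ on $U$ gives, exactly as in the proof of Theorem \ref{inversemappingthm}, that the differences $D_{\Ss,j}:=g_{\Ss,j}-g_{\Ss,j-1}=h_\Ss\circ g_{\Ss,j-2}-h_\Ss\circ g_{\Ss,j-1}$ satisfy $\w(D_{s,j}(c))>\w(D_{s,j-1}(c))$ for every $c\in U$. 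Consequently the family $(D_{s,j}(c))_{j\in\N}$ is summable, and $g_\Ss(s,c):=\lim_{j\to\infty}g_{\Ss,j}(s,c)$ is well-defined pointwise.

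The key point, and the main thing beyond the non-parametric proof, is to verify that the limit $g_\Ss$ is itself a regular family. This comes for free from the uniform contractivity: contractivity of $h_\Ss$ holds independently of $s\in\Ss$, so for each fixed multi-index $\alpha\in\N^{n+1}$ there exists an integer $N(\alpha)$ such that for all $j\geq N(\alpha)$, the $\alpha$-component of $D_{\Ss,j}$ is identically zero (here I invoke Lemma \ref{Lem:characterize_contractiveness}: the $\alpha$-coefficient of $h_\Ss\circ g_{\Ss,j}$ depends only on finitely many coefficients of $g_{\Ss,j}$ of strictly smaller order, which stabilize after finitely many steps). Therefore the $\alpha$-coefficient of $g_\Ss$ equals the $\alpha$-coefficient of $g_{\Ss,N(\alpha)}$, and is thus a regular function on $\Ss\times\Ce^m$. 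So $g_\Ss$ is a regular family.

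Finally, to verify $f_\Ss\circ g_\Ss=\id_\Ss$, I would repeat the telescoping estimate from the proof of Theorem \ref{inversemappingthm}: write $f_\Ss\circ g_\Ss-\id=(g_\Ss-g_{\Ss,j})+(h_\Ss\circ g_\Ss-h_\Ss\circ g_{\Ss,j})+(h_\Ss\circ g_{\Ss,j}-h_\Ss\circ g_{\Ss,j-1})$, and observe that each of the three summands tends to $0$ as $j\to\infty$ pointwise in $(s,c)\in\Ss\times U$ by the contractivity of $h_\Ss$, so they are identically zero as regular families. The main obstacle is thus purely bookkeeping: checking that the pointwise stabilization argument for each fixed $\alpha$-coefficient is compatible with the definition of regular families, which is exactly what Lemma \ref{Lem:characterize_contractiveness} guarantees in its parametric form.
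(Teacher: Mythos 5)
Your proposal is correct and follows essentially the same route as the paper: the same recursion $g_{\Ss,j+1}=\id-h_\Ss\circ g_{\Ss,j}$, the same inductive argument that each stage is a regular family because composition preserves regularity, and the same observation that uniform contractivity forces each $\alpha$-coefficient to stabilize after finitely many steps, so the limit is regular. Your explicit reduction to $\id+\tilde\ell_\Ss h_\Ss$ and the appeal to Lemma \ref{Lem:characterize_contractiveness} just make visible what the paper leaves implicit in its one-line reference to the non-parametric proof.
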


\begin{proof}
The same reasoning as in the proof of Theorem \ref{inversemappingthm} can be applied in the present situation. The inverse for $f_\Ss$ is constructed as the limit of the sequence defined by the recursion $g_{\Ss}^0=0$,

$$g_{\Ss}^{i+1} = \id_\Ss - h_\Ss\circ g_{\Ss}^i.$$

By induction one shows that $g_\Ss^i$ is a regular family, the case $i=0$ being trivial. For $i>0$: The map $h_\Ss$ is a regular family on $U=\m^l\cdot \Ce^m$. Therefore -- using the induction hypotheses -- we see that $h_\Ss \circ g_\Ss^{i-1}$ is regular and so is $g_\Ss^i$. Moreover, $(\w (g_\Ss^{i+1}- g_\Ss^i))_i$ is an increasing sequence. Thus the limit of $(g_\Ss^i)_i$ exists and the $\alpha$-coefficient of $g_\Ss$, $\alpha \in \N^{n+1}$, is the same as the $\alpha$-coefficient of $g_\Ss^{i_\alpha}$ for some $i_\alpha \in \N$, for which we have just shown that it is a regular family.
\end{proof}

Consider a regular family $f_\Ss$ on $U\subseteq \Ce^m$. The {\it tangent map} with respect to $\Ce^m$ of the family $f_\Ss$ (see section \ref{differentialcalc}) is the regular family $Tf_\Ss$ given by linear maps $Tf_s$ which are the tangent maps to the $f_s$, $s\in \Ss$.\\
Let $\ell_\Ss: \Ss\times U \ra \Ce^p$, $U \subseteq \Ce^m$, be a regular linear family. A {\it scission} for $\ell_\Ss$ is a regular linear family $\sigma_\Ss:\Ss \times \Ce^p \ra \Ce^m$ such that $\ell_\Ss \sigma_\Ss \ell_\Ss = \ell_\Ss$.\\
Assume that $J$ is a direct complement of $\im(T_{a_0} f_{s})$ in $\Ce^p$, for all $s \in \Ss$, where $a_0 \in U$. The family $f_\Ss$ is {\it of constant rank at} $a_0$ {\it with respect to} $J$ (or {\it fulfills the parametric rank condition at $a_0$ w.r.t. $J$}) if for all $s\in \Ss$ the restricted maps $f_s$ are of constant rank at $a_0$ with respect to $J$ (see section \ref{sectionrankthm}).

\begin{Thm}[Parametric Rank Theorem] \label{parametricrankthm}
Let $f_\Ss: \Ss\times U \ra \Ce^p$ be a regular family, $U=\m^l\cdot \Ce^m$; write $f_\Ss = \ell_\Ss +  h_\Ss$ with $\ell_\Ss := T_0 f_\Ss$. Assume $f_\Ss(-,0)=0$ and that the following conditions hold: (i) the linear family $\ell_\Ss$ admits a scission $\sigma_\Ss$ s.t. $\sigma_\Ss h_\Ss$ is contractive; (ii) the family $f_\Ss$ fulfills the parametric rank condition w.r.t. a direct complement of $\im(T_0f_\Ss)$. Then there exist regular (locally) invertible families $u_\Ss$ and $v_\Ss$ with $$v_\Ss f_\Ss u_\Ss^{-1} =\ell_\Ss.$$
\end{Thm}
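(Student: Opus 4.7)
The plan is to run the construction of the linearizing maps from Theorem \ref{rankthm} uniformly in $s\in \Ss$, using Theorem \ref{inversemappingthm_parametric} in place of Theorem \ref{inversemappingthm} to ensure that the resulting inverses stay regular families, and invoking the parametric rank condition fiberwise to verify the linearization identity.

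First, set $u_\Ss := \id_\Ss + \sigma_\Ss h_\Ss$. By hypothesis (i), $\sigma_\Ss h_\Ss$ is a regular contractive family on $U$ vanishing at the origin, so Theorem \ref{inversemappingthm_parametric} produces a regular (local) inverse $u_\Ss^{-1}$. A direct computation gives
$$\ell_\Ss \sigma_\Ss f_\Ss \;=\; \ell_\Ss + \ell_\Ss \sigma_\Ss h_\Ss \;=\; \ell_\Ss u_\Ss,$$
so that, after replacing $f_\Ss$ by $f_\Ss \circ u_\Ss^{-1}$, we may assume the normalization $\ell_\Ss \sigma_\Ss f_\Ss = \ell_\Ss$. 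Put
$$v_\Ss \;:=\; \id_\Ss - (\id_\Ss - \ell_\Ss \sigma_\Ss)\, f_\Ss \, \sigma_\Ss \ell_\Ss \sigma_\Ss;$$
the same telescoping simplification as in the proof of Theorem \ref{rankthm} collapses $v_\Ss$ to $\id_\Ss - h_\Ss \sigma_\Ss$, which is again a regular contractive family, so Theorem \ref{inversemappingthm_parametric} yields a regular inverse $v_\Ss^{-1}$. Up to this point the rank condition has not been used.

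It remains to verify $v_\Ss f_\Ss = \ell_\Ss$. As in Theorem \ref{rankthm}, this reduces to the identity $f_\Ss = f_\Ss \sigma_\Ss \ell_\Ss$, via
$$v_\Ss f_\Ss \;=\; f_\Ss - (\id_\Ss - \ell_\Ss \sigma_\Ss) f_\Ss \sigma_\Ss \ell_\Ss \;=\; f_\Ss - (\id_\Ss - \ell_\Ss \sigma_\Ss) f_\Ss \;=\; \ell_\Ss \sigma_\Ss f_\Ss \;=\; \ell_\Ss.$$
The intermediate identity is checked fiber by fiber: for each $s\in\Ss$, hypothesis (ii) says $f_s$ satisfies the rank condition at $0$ with respect to $\ker(\ell_s\sigma_s)$, a direct complement of $\im(\ell_s) = \im(T_0 f_s)$, and $\sigma_s h_s$ is contractive. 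The rank-condition argument of Theorem \ref{rankthm}, applied to the curve $\gamma(s') = s'\cdot a$ for $a\in U$ and vectors $b\in \ker\ell_s$, then forces $T_a f_s\cdot b = 0$, whence $f_s = f_s \sigma_s \ell_s$. Two regular families that coincide on every $\K$-fiber over $\Ss$ agree on $\Ss\times U$, so $f_\Ss = f_\Ss \sigma_\Ss \ell_\Ss$ as regular families, and consequently $v_\Ss f_\Ss u_\Ss^{-1} = \ell_\Ss$.

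The main obstacle is not algebraic but a matter of \emph{bookkeeping}: one must check at each step that the composed objects ($\sigma_\Ss h_\Ss$, $h_\Ss \sigma_\Ss$, the inverses $u_\Ss^{-1}$, $v_\Ss^{-1}$, and the products producing $f_\Ss u_\Ss^{-1}$) remain regular families and retain the contractivity they have in the non-parametric case. This is precisely what Theorem \ref{inversemappingthm_parametric} guarantees, and is the only place where the parametric structure of the problem does genuine work beyond the original proof of Theorem \ref{rankthm}.
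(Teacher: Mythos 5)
Your proof matches the paper's proof: both define $u_\Ss=\id_\Ss+\sigma_\Ss h_\Ss$ and $v_\Ss=\id_\Ss-(\id_\Ss-\ell_\Ss\sigma_\Ss)f_\Ss\sigma_\Ss\ell_\Ss\sigma_\Ss$, invoke Theorem \ref{inversemappingthm_parametric} for regularity and invertibility of the families, and then use the parametric rank condition together with the fiberwise argument from Theorem \ref{rankthm} to conclude $v_\Ss f_\Ss u_\Ss^{-1}=\ell_\Ss$. You simply spell out the pointwise linearization step that the paper compresses into one sentence.
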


\bigskip

\begin{proof}
The arguments used in the proof of Theorem \ref{rankthm} may be applied here. By the assumptions and Theorem \ref{inversemappingthm_parametric} the families  $u_\Ss=\id_\Ss + \sigma_\Ss h_\Ss$ respectively $v_\Ss=\id_\Ss - (\id_\Ss - \ell_\Ss \sigma_\Ss)f_\Ss\sigma_\Ss \ell_\Ss \sigma_\Ss$ are regular and invertible. By condition (ii) and Theorem \ref{rankthm} these families linearize $f_\Ss$ pointwise, thus

$$v_\Ss f_\Ss u_\Ss^{-1} = \ell_\Ss.$$ 
\end{proof}

\subsection{The Rank Theorem for test-rings}\label{sec:RankTheorem_for_TestRings}
Let $A$ be {\it test-ring}, i.e., a local, commutative, and unital $\K$-algebra with nilpotent maximal ideal $\n$. Denote by $\Ce_A$ the space of $n$-cords with coefficients in $A$. By  Theorem \ref{inversemappingthm} it follows that for a textile map $h\colon U\subseteq \Ce_A^m\ra \Ce_A^m$ which is contractive with respect to the refined order $\W$, the map $\id + h$ is invertible. Again $U$ denotes an $\m$-adic neighbourhood of $0$. The proof of Theorem \ref{rankthm} can be adopted to this setting, thus giving the analogous assertion:

\begin{Thm}[Rank Theorem for test-rings]\label{thm:RankTheorem_for_TestRings}
Let $A$ be a test-ring and $f: U \subseteq \Ce_A^m \ra \Ce_A^p$ be a textile map on an $\m$-adic neighbourhood $U$ of $0$ with $f(0)=0$. Write $f$ in the form
$f=\ell + h$ with $\ell=T_0f$ the tangent map of $f$ at $0$. Assume that $\ell$ admits a scission
$\sigma$ for which $\sigma h$ is contractive on $U$, and that $f$ has constant rank at $0$
with respect to $\ker(\ell\sigma)$. Then there exist locally invertible textile
maps $u:U\ra \Ce_A^m$ and $v:f(U) \ra \Ce_A^p$ at $0$ such that
$$vfu^{-1} = \ell.$$
\end{Thm}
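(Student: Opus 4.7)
The plan is to imitate the proof of Theorem \ref{rankthm} line by line, replacing each appeal to the inverse function theorem over $\K$ by an appeal to its test-ring analogue Theorem \ref{inversemappingthm} (which is already formulated and proved for $\Ce_A$ using the refined order $\W$), and to check that every algebraic identity used in the proof is formal and therefore survives base change from $\K$ to $A$. Throughout, ``contractive'' is understood in the $\W$-sense, which is the notion relevant for summability of families of cords in $\Ce_A^m$.

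First I would define $u=\id_{\Ce_A^m}+\sigma h\colon U\to\Ce_A^m$. Because $\sigma h$ is contractive on $U$ by hypothesis and $u$ has identity as its linear part, Theorem \ref{inversemappingthm} yields a textile inverse $u^{-1}$ defined on $u(U)$. The identity $\ell\sigma f=\ell\sigma(\ell+h)=\ell+\ell\sigma h=\ell(\id+\sigma h)=\ell u$ then gives $\ell\sigma(fu^{-1})=\ell$, so after replacing $f$ by $fu^{-1}$ I may and do assume $\ell\sigma f=\ell$. This entire step is purely formal and uses only $\ell\sigma\ell=\ell$ together with the test-ring inverse theorem.

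Next, I would set
\[
v=\id_{\Ce_A^p}-(\id_{\Ce_A^p}-\ell\sigma)f\sigma\ell\sigma.
\]
Writing $P=(\id-\ell\sigma)f\sigma\ell\sigma$, the scission identity $\ell\sigma\cdot\ell\sigma=\ell\sigma$ gives $\ell\sigma(\id-\ell\sigma)=0$, hence $P^2=0$. Thus $v$ is textile with two-sided inverse $v^{-1}=\id+P$, by a formal calculation valid over any commutative ring. In particular no contractivity is needed for inverting $v$, so this step carries over to $A$ without change. Using $\sigma\ell\sigma=\sigma$ (which holds because $\sigma$ takes values in a complement of $\ker\ell$, as discussed in Section \ref{sectionscission}), one rewrites $v=\id-h\sigma$, matching the shape used in the proof of Theorem \ref{rankthm}.

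Finally, I would verify $vf=\ell$. Under the reduction $\ell\sigma f=\ell$ one has $f=\ell+(\id-\ell\sigma)f$, and the identity $vf=\ell$ is equivalent to $(\id-\ell\sigma)f=(\id-\ell\sigma)f\sigma\ell$, i.e.\ to $(\id-\ell\sigma)T_af$ vanishing on $\ker\ell$ for $a$ varying in $U$. To deduce this I would apply the constant rank hypothesis to the curve $\gamma(s)=s\cdot a$ in $\Ce_A$: for any $b\in\ker\ell$ the curve $\eta=T_{\gamma(s)}f\cdot b$ admits both the trivial decomposition $\eta=T_\gamma f\cdot b+0$ and the decomposition $\eta=T_\gamma f\cdot 0+\eta$ once one shows $\eta\in\ker(\ell\sigma)$; uniqueness then forces $T_af\cdot b=0$, and hence $f=f\sigma\ell$, which together with $\ell\sigma f=\ell$ gives $vf=\ell$. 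The main obstacle I anticipate is precisely this last step: one must verify that the notion of \emph{curve}, \emph{tangent map}, and \emph{constant rank} from Sections \ref{sectionformalmaps}--\ref{sec:Rank_Theorem_Without_Parameters} transport meaningfully to $\Ce_A$, that Proposition \ref{flatisrank} (flat $\Leftrightarrow$ constant rank under the order condition) still holds with $\W$ in place of $\w$, and that the uniqueness clause in (\ref{rankcondition}) remains valid after base change to the test-ring $A$---the last being subtle because curves now pick up nilpotent directions, and summability of the iterative constructions used implicitly in Proposition \ref{flatisrank} must be rechecked against the refined order.
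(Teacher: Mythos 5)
Your proposal is correct and matches the paper's treatment exactly: the paper proves Theorem~\ref{thm:RankTheorem_for_TestRings} by asserting that the proof of Theorem~\ref{rankthm} adapts verbatim to the test-ring setting, with Theorem~\ref{inversemappingthm} (in its $\W$-contractive form over $\Ce_A$) furnishing the invertibility of $u=\id+\sigma h$, and the invertibility of $v$ and the identity $vf=\ell$ being, as you note, purely formal consequences of $\ell\sigma\ell=\ell$ and the rank hypothesis. The reservation you voice at the end about Proposition~\ref{flatisrank} is not actually an obstacle: that proposition is never invoked in the proof of Theorem~\ref{rankthm} (it only serves to \emph{verify} the rank hypothesis in applications), and here constant rank --- with its uniqueness clause, for curves over $\Ce_A$ --- is simply assumed, so nothing about it needs to be re-derived.
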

\bigskip





\section{Six instances of the linearization principle}\label{sec:Applications}

We are now ready to prove by one argument the results in singularity theory mentioned in the introduction. Recall that $\Ce$ and $\Aarc$ denote the $\K$-algebras of $n$ respectively $1$ cords. In the latter case we speak of arcs. They naturally identify with elements of $\K[[x_1,\ldots, x_n]]$ respectively $\K[[t]]$.

\subsection{On a fibration theorem by Denef and Loeser} \label{sectionarcspace}
Let $X \subseteq \A_{\K}^n$ be an affine variety over a field $\K$ of characteristic $0$ defined by elements $f_1, \ldots, f_p$ of $\K[x_1,\ldots, x_n]$. These polynomials define by substitution of power series $a^i\in \K[[t]]$ for the $x_i$ a tactile map $F:\Aarc^n \ra \Aarc^p$ with coefficients $(F_i)_{i\in \N}$. The felt defined by $F$ is called the {\it arc space of $X$} and denoted by $X_\infty$. In terms of power series $$X_\infty =\{a=(a^1,\ldots, a^n) \in \K[[t]]^n; f_1(a)=\ldots= f_p(a)=0\}$$
For $q\in \N$ we write $\pi_q$ for the natural projection $X_\infty \ra \left(\Aarc/{\m^{q+1}}\right)^n \cong (\K^{q+1})^n$ sending an arc $a$ to the vector of its first $q+1$ coefficient-vectors $(a_0,\ldots, a_{q})$. The image of $X_\infty$ under $\pi_q$ is contained in $$X_q=\{ a=(a_0,\ldots, a_{q}) \in (\K^n)^{q+1}; F_0(a)=\ldots=F_{q}(a)=0\},$$ the set of ``approximate solutions'' called the {\it $q$-th jet-space of $X$}. Note: $\pi_q(X_\infty)$ consists of those approximate solutions which can be lifted to (exact) solutions in $\Aarc^n$. For an element $g\in \K[x_1,\ldots, x_n]$ and an arc $a \in \Aarc^n$ we define the {\it order of $a$ with respect to $g$} as $$\w_g(a) :=\w(g(a)).$$ Here $g(a)$ is obtained by substituting $a^i$ for $x_i$ in $g$. Let $I \subseteq \K[x_1,\ldots, x_n]$ be an ideal. The {\it order of $a$ with respect to $I$} is defined as $$\w_I (a) := \min\{\w_g a ; g \in I\}.$$ Especially if $I$ denotes the ideal of $\Sing (X)$, we will write $\Aarce(X)$, $e\in \N$, for the set of arcs on $X$ lying in $\Sing (X)$ with order at most $e$, i.e., $$\Aarce(X)=\{a \in X_\infty; \w_I(a)\leq e\}.$$\\

\begin{Thm}\label{localtrivialitytheorem}
(\cite{loeser}, Lemma 4.1) Let $X$ be an affine algebraic variety over $\K$ of pure dimension $d$; let $e\in \N$. The map $$\pi_e: X_\infty \ra \pi_e(X_\infty)\subseteq (\K^n)^{e+1}$$ is a piecewise trivial fibration over $\pi_{e}(\Aarce(X))$ with fibre $\m^{e+1}\cdot {\Aarc^d}$. This means: $\pi_{e}(\Aarce(X))$ can be covered by finitely many locally (Zariski-) closed subsets $\Ss$ such that there exist textile invertible maps $\varphi$ with commutative diagram

$$\xymatrix@1{ \pi_e^{-1}(\Ss) \ \ar@{->}[rr]^{\varphi} \ar@{->}[rdd]_{\pi_e} & & \ \Ss\times (\m^{e+1}\cdot \Aarc^d) \ar@{->}[ldd]^ {\textnormal{proj}_1}\\
& & \\
& \Ss &  }$$

\end{Thm}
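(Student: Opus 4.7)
The plan is to realize each fiber of $\pi_e$ over a point $\bar a \in \pi_e(\Aarce(X))$ as the zero set of a tactile family $H_\Ss(\bar a,\hat a) := F(\bar a + \hat a) - F(\bar a)$, defined on cords $\hat a \in \m^{e+1}\cdot \Aarc^n$ and depending regularly on the parameter $\bar a$, and then to linearize this family via the Parametric Rank Theorem \ref{parametricrankthm}. After linearization the equation $F(\bar a + \hat a) = 0$ becomes a linear system, and its solution set will be identified with a translate of a kernel isomorphic to $\m^{e+1}\cdot \Aarc^d$, which furnishes the trivialization.

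First I would construct the stratification. Because $X$ is of pure dimension $d$, its singular ideal $I$ is generated by the $f_i$'s together with the $(n-d) \times (n-d)$-minors of $\partial F/\partial x$; the hypothesis $\bar a \in \pi_e(\Aarce(X))$ forces one such minor $M(\bar a)$ to satisfy $\w(M(\bar a)) \leq e$. I would decompose $\pi_e(\Aarce(X))$ into finitely many locally closed strata $\Ss$ on each of which, after a permutation of the coordinates $x_1,\ldots,x_n$, a single minor $M = \det(\partial_j f_i)_{1 \leq i,j \leq n-d}$ has constant order $r = r(\Ss) \leq e$ with constant initial form, and a fixed linear coordinate change brings $M(\bar a)$ into Weierstrass position in $t$ of order $r$ uniformly in $\bar a$. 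Only finitely many strata arise because the relevant data is polynomial in a bounded number of coefficients of $\bar a$.

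Second I would apply Theorem \ref{parametricrankthm} to $H_\Ss$ on $\Ss \times (\m^{e+1}\cdot \Aarc^n)$. Its tangent at $\hat a = 0$ is the regular linear family $\ell_{\bar a}(v) = \partial F(\bar a)\cdot v$, and the nonlinear remainder $h_\Ss$ has order $\geq 2(e+1)$. Using the chosen minor $M$, I construct a family of scissions $\sigma_\Ss$ for $\ell_{\bar a}$ by projecting onto the first $n-d$ rows, inverting $M(\bar a)$ by parametric Weierstrass division in the sense of the construction preceding Proposition \ref{genericrankscission}, and extending by zero on the remaining $d$ ``free'' components. That proposition yields $\kappa(\sigma_\Ss) \geq -r$, and hence $\kappa(\sigma_\Ss h_\Ss) \geq 2(e+1) - e = e + 2$, so the order condition is satisfied on $\m^{e+1}\cdot \Aarc^n$. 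The parametric Rank Condition then follows from Proposition \ref{pointwiserank}, since for any $\hat a$ of order $\geq e+1 > r$ the initial forms of the $(n-d)$-minors of $\partial F(\bar a + \hat a)$ coincide with those of $\partial F(\bar a)$, so the initial module of $\im(T_{\hat a} H_{\bar a})$ agrees with that of $\im(\ell_{\bar a})$.

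Theorem \ref{parametricrankthm} then produces regular invertible families $u_\Ss$ and $v_\Ss$ with $v_\Ss \, H_\Ss \, u_\Ss^{-1} = \ell_\Ss$, where $u_\Ss$ is an automorphism of $\m^{e+1}\cdot \Aarc^n$. Under the change of variable $\tilde a = u_\Ss(\bar a,\hat a)$ the fiber $\pi_e^{-1}(\bar a)$ becomes the affine subspace $\{\tilde a \in \m^{e+1}\cdot \Aarc^n : \ell_{\bar a}(\tilde a) = v_\Ss(\bar a, -F(\bar a))\}$, which is nonempty by the defining property of $\pi_e(X_\infty)$ and is identified, regularly in $\bar a$ via $\sigma_\Ss$, with a translate of $\ker(\ell_{\bar a}) \cap \m^{e+1}\cdot \Aarc^n$; projection to the $d$ free coordinates then realises this kernel as $\m^{e+1}\cdot \Aarc^d$. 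Composing with $u_\Ss^{-1}$ yields the desired textile trivialization $\varphi\colon \pi_e^{-1}(\Ss) \to \Ss \times (\m^{e+1}\cdot \Aarc^d)$ commuting with $\pi_e$. The main technical obstacle is the uniform parametric construction of the scission $\sigma_\Ss$: the Weierstrass division used to invert $M(\bar a)$ must depend regularly on $\bar a$, which requires $M$, its order $r$, and the regularizing coordinate change to be chosen uniformly on $\Ss$ --- this is precisely what the refinement of the stratification is designed to guarantee, after which the contractivity, rank, and identification-of-kernel estimates all fall out of the single numerical bound $r \leq e$.
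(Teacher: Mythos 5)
Your proposal follows the same overall strategy as the paper: Taylor-expand $F(\bar a + \hat a)$ around the truncation $\bar a$, view this as a regular family over a stratification of $\pi_e(\Aarce(X))$, construct a parametric scission of the Jacobian, check the order and rank conditions, invoke the Parametric Rank Theorem~\ref{parametricrankthm}, and identify the linearized fibre with $\m^{e+1}\cdot\Aarc^d$. That much matches both the hypersurface case and the sketched general case in the paper.

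Where you diverge is in the construction of the scission $\sigma_\Ss$. The paper, in the general case, brings $\p f(\bar a)\in\K[[t]]^{N\times n}$ into Smith normal form over the PID $\K[[t]]$; the nonzero diagonal entries $t^{\varepsilon_1}E_1,\ldots,t^{\varepsilon_{n-d}}E_{n-d}$ directly give a standard basis of $\im(\ell)$ whose maximal order $\varepsilon_{n-d}\leq e$ is controlled by the order of the $(n-d)$-minors, and the scission comes from the division theorem. You instead pick one $(n-d)\times(n-d)$ block $M$ with nonvanishing determinant, invert $M$ by adjugate plus Weierstrass division, and extend by zero on the other $d$ source coordinates and the remaining target rows. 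For this $\sigma$ to actually satisfy $\ell\sigma\ell=\ell$ you need the remaining $N-(n-d)$ rows of $\p F(\bar a)$ to lie in the $\K((t))$-span of the chosen $n-d$ rows --- equivalently, the Schur complement of the chosen block must vanish, equivalently rank\,$\p F(\bar a)=n-d$ over $\K((t))$. This holds for the \emph{full} arc $a$ (which passes through the smooth locus generically), but $\bar a$ is only a truncation and does not lie exactly on $X$, so the $(n-d+1)$-minors of $\p F(\bar a)$ are only $O(t^{e+1})$, not exactly zero. You do not address this; your ``project, invert, extend by zero'' recipe is only guaranteed to be a scission under an additional rank hypothesis, and if it fails the constructed $\sigma$ will not satisfy $\ell\sigma\ell=\ell$. (To be fair, the paper's displayed Smith form with exactly $n-d$ nonzero entries also implicitly assumes this rank, and neither text addresses how to proceed if the rank of $\p F(\bar a)$ exceeds $n-d$; but the Smith-form route at least handles whatever rank the matrix happens to have, which your block-inversion does not.)

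There is also a small numerical slip in the contraction estimate. You write $\kappa(\sigma_\Ss h_\Ss)\geq 2(e+1)-e=e+2$, but $2(e+1)$ is the \emph{order} of $h$, not its contraction degree on $\m^{e+1}\cdot\Aarc^n$. Following the paper's hypersurface computation, the quadratic terms have $\kappa(h)\geq 2(e+1)-(e+1)=e+1$, so with $\kappa(\sigma_\Ss)\geq -r\geq -e$ one gets $\kappa(\sigma_\Ss h_\Ss)\geq (e+1)-e=1>0$. The conclusion (order condition holds) is still correct, but the arithmetic conflates order and contraction degree.
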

\medskip

This result implies Lemma 4.1 in \cite{loeser}: the trivializing textile map $\varphi$ is by construction (see below) compatible with the truncation maps $\pi_q$, respectively $\pi^q_p$, $q\geq p$, as described in section \ref{firstdefinitions}, thus inducing a trivial fibration $\pi_{q+1}(X_\infty) \ra \pi_q(X_\infty)$, $q\geq e$, over $\pi_q(\Aarce(X))$ (with respective affine bundle structure) in the sense of \cite{loeser}. The proof of Theorem \ref{localtrivialitytheorem} will be given in the next two sections.\\

The proof of the trivialization of $\pi_e$ in \cite{loeser} and \cite{Looijenga_Motivic_Measures} makes use of Hensel's Lemma (see e.g. Corollary 1 in III, \S 4.5 of \cite{Bourcomalg}), which in turn follows from Thm. 2, III, \S 4.5 there. Notice that the statement of this Thm. 2 is analogous to the Rank Theorem in the respective setting. Our proof of Thm. \ref{localtrivialitytheorem} gives in addition an
explicit description of the trivializing map $\varphi$.\\

\subsubsection*{The Hypersurface Case} \label{sectionhypersurfacecase}

Let $X=V(f)\subseteq \A_{\K}^n$ with $f\in \K[x_1,\ldots, x_n]$ be a (reduced) hypersurface. The singular locus $\Sing(X)$ is given by the Jacobian ideal $$J_f=\langle f,\p_1 f, \ldots, \p_nf \rangle.$$
Each $a \in \Aarce(X)$ can be decomposed as $a= \bar a + \hat a$ where $\bar a$ denotes the arc $(a_0,\ldots, a_{e},0,\ldots)$ and $\hat a=a-\bar a$. Thus, $\w(\hat a)\geq e+1$. We identify $\bar a$ with the element $\pi_e(a)$. The polynomial $f$ induces a tactile $\Aarc^n\ra \Aarc$. 
We view $f$ as a regular family over the parameter set $\Ss=\pi_e(\Aarce(X))$, which is constructible, see e.g. \cite{reguera_families}, \S 1, or section (4.4) in \cite{loeser}. The parameter set admits a partition into locally closed subsets $\Ss_{i,e'}$ (w.r.t. the Zariski-topology on $\A^{n(e+1)}_\K$), $(i,e') \in \Lambda:= \{1,\ldots,n\}\times \{0,\ldots, e\}$, defined as 
$$\Ss_{i,e'}:=\{a \in \pi_e(\Aarce(X)); \min_j\{\w(\p_jf(a))\} = \w(\p_if(a))=e'\}.$$

Consider for $\alpha \in \Lambda$ the family $$f_{\Ss_\alpha}: \Ss_\alpha \times \m^{e+1}\cdot \Aarc^n \ra \Aarc; (\bar a,\eta)\mapsto f(\bar a+\eta).$$ For simplicity of notation we will denote this family again by $f$. Taylor expansion yields $$f(\bar a + \eta)=f(\bar a) + \widehat f(\bar a, \eta), $$ where $\widehat f(\bar a, \eta) = \sum\p_jf(\bar a)\cdot \eta_j + h(\bar a,\eta)$ defines a regular family on $\Ss$. The map $h$ is of order at least $2$ in $\eta$.\\

The map $\widehat f$ fulfills all assumptions of the Parametric Rank Theorem. Its linear part $\ell:=T_0 \widehat f$ has image $$\im(\ell(\bar a,-)) = \langle \p_if(\bar a)\rangle = \m^{e'+e+1}$$ for all $\bar a \in \Ss$. The image has direct complement 
$J=\oplus_{i=0}^{e'+e}\K t^i$.\\


Moreover, $$T_\varrho \widehat f \cdot \eta = \sum_j (\p_j f(\bar a) + \p_j h(\bar a,\varrho))\cdot \eta_j$$ and $\w(\p_if(\bar a) + \p_ih(\bar a,\varrho))=\w(\p_if(\bar a))$ for all $\varrho \in \m^{e+1}\cdot \Aarc^n$. Hence, by Proposition \ref{pointwiserank}, $\widehat f$ fulfills the Rank Condition for all $\bar a \in \Ss$ on $\m^{e+1}\cdot \Aarc^n$ (with respect to $J$). By Construction \ref{existenceofscissions}, the linear part $\ell$ admits a scission $\sigma$ with contraction degree $\kappa(\sigma)=-e'$ (and complement $\ker \ell\sigma=J$). Indeed, $\sigma$ is constructed by taking the quotient when dividing through $\p_if(\bar a)$ (cf. section \ref{sectionscission}). Since the division theorem (theorem \ref{divisionthm}) requires monic divisors, the coefficients of the family $\sigma=\sigma_\Ss$ will be rational in the coefficients of $\bar a$ but regular on the chosen $\Ss$. It is easy to see that $$\kappa(h)\geq 2({e+1})-({e+1}) = e + 1 > e'.$$ Hence, $\sigma h$ is contractive. The Parametric Rank Theorem gives regular invertible families $u_\Ss$ and $v_\Ss$ such that $$v_\Ss \widehat f_\Ss u_{\Ss}^{-1}=\ell_\Ss.$$
The map $\m^{e+1}\cdot \Aarc^n \ra \m^{e+1}\cdot \Aarc^n; \eta \mapsto u_{\bar a}^{-1}(\eta)$ defines an isomorphism between solutions $x$ of $$f(\bar a + x) = 0$$ and solutions $y$ of $$\ell(y)=v(-f(\bar a)).$$ By linear algebra the set of solutions to the last equation is isomorphic to $\m^{e+1}\cdot \Aarc^{n-1} $ via a linear map $\psi:\m^{e+1}\cdot \Aarc^n \ra \m^{e+1}\cdot \Aarc^{n-1}$. This results in the commutative diagram

$$\xymatrix@1{ \pi_e^{-1}(\Ss) \ \ar@{->}[rr]^{\varphi} \ar@{->}[rdd]_{\pi_e} & & \ \Ss\times \m^{e+1}\cdot \Aarc^{n-1} \ar@{->}[ldd]^ {\textnormal{proj}_1}\\
& & \\
& \Ss &  }$$

where $\varphi$ is the invertible textile map defined by $\varphi(\bar a + \hat a)=(\bar a, \psi u_{\bar a}^{-1}(\hat a))$.\\

Consider now the general situation $\Ss=\cup_{\alpha\in \Lambda}\Ss_\alpha$. Then for each $\alpha \in \Lambda$ there exists a trivializing map $\varphi_\alpha$. It is a simple matter to check that the transition maps $\varphi_\alpha \varphi_\beta^{-1}$ are linear isomorphisms for $\alpha, \beta \in \Lambda$. This completes the proof in the hypersurface case.



\subsubsection*{The General Case}
Let $X$ be an algebraic variety over $\K$ of pure dimension $d$ defined by the ideal $\langle f_1, \dots, f_N\rangle$ in $\K[x_1,\ldots, x_n]$. Its singular locus is given by the ideal $\langle f_1,\ldots, f_N, \theta\rangle$ where $\theta$ is running through all $(N-d)$-minors of $\p f= \left(\frac{\p f_i}{\p x_j}\right)_{ij}$. In this case $\Aarce(X)$ is the set of arcs $a\in X_\infty$ for which $\w \theta(a) \leq e$ for at least one $(N-d)$-minor $\theta$.\\
We proceed analogously to section \ref{sectionhypersurfacecase} and first show that pointwise linearization is possible. Let $\bar a$ again be the truncation of an arc $a\in \Aarce(X)$ to order $e$ and consider $\p f(\bar a) \in \K[[t]]^{N\times n}$. Since $\K[[t]]$ is a principal ideal domain, we can find linear automorphisms $P$, $Q$ of $\K[[t]]^N$ respectively $\K[[t]]^n$ such that $P \p f(\bar a) Q$ is in Smith-form, i.e., 

\begin{displaymath}
\left( \begin{tabular}{cccccc}
$t^{\varepsilon_1}E_1$ & $0$ & $\cdots$ & $0 $ &$\cdots$ & $0$ \\
$0$ & $\ddots$ & &$\vdots$ & & $\vdots$\\
$\vdots$ & $\ $ & $t^{\varepsilon_{N-d}}E_{N-d}$ & $\vdots $ &  & $\vdots$ \\
$0$ & $\cdots$ & $\cdots$ & $0$ & $\cdots$ &$0$\\
$\vdots$ & &&&& $\vdots$\\
$0$ & $\cdots$ &$\cdots$ & $\cdots$ & $\cdots$ & $0$
\end{tabular} \right)
\end{displaymath}

with $E_i \in \K[[t]]^{\times}$ and $\varepsilon_1\leq \ldots \leq \varepsilon_{N-d}$. Endow $\K[[t]]^N$ with the module ordering defined in section \ref{sectionscission}. Then it is clear that the first $N-d$ columns form a standard basis. Its maximal order is given by $\varepsilon_{N-d}$. The greatest common divisor of all $(N-d)$-minors of $\p f(\bar a)$, which is invariant under equivalence of matrices, is equal to $\prod_i E_i t^{\varepsilon_i}$. Therefore $\varepsilon_{N-d}\leq e$. Constructing a scission $\sigma$ for $\ell(z)=\p f(\bar a)z$ as in section \ref{sectionscission} thus shows that $\sigma h$ is contractive. By Proposition \ref{pointwiserank} we see that the Rank Condition is fulfilled. This gives pointwise linearization of $\hat a \mapsto f(\bar a + \hat a)$ where $f=(f_1,\ldots, f_N)$.\\

We leave it to the reader to verify that one can carry out the same procedure as in section \ref{sectionhypersurfacecase} to prove the local triviality of $\pi_e: X_\infty \ra \pi_e(X_\infty)\subseteq (\K^n)^{e+1}$.



\subsection{On a local factorization theorem of Grinberg-Kazhdan and Drinfeld}\label{sectiondrinfeld}

Let $X$ be a scheme of finite type over $\K$ and denote by
$X_\infty[\gamma_0]$ the formal neighbourhood of the arc space
$X_\infty$ in a $\K$-arc $\gamma_0$ which does not lie in $(\Sing
X)_\infty$. If $Z$ is a scheme with structure sheaf $\Oo_Z$ and $I$ is
an ideal sheaf of a point $p\in Z$, then the formal neighbourhood of
$Z$ in $p$, denoted by $Z[p]$, is the topological space $\{p\}$
equipped with the structure sheaf $\varprojlim \Oo_Z/I^n$ (given
by $\widehat \Oo_{Z,p}$); see for example \cite{hartshorne},
Chap. II.9. For our purpose we will use the characterization of a
formal neighbourhood by its functor of points (see below). The
following theorem is proved for $\K=\C$ by Grinberg-Kazhdan in
\cite{Grinberg-Kazhdan} and for arbitrary fields $\K$ in
\cite{drinfeld} (for the notion of formal scheme we refer to \cite{hartshorne}, Chap. II.9.): 

\begin{Thm}\label{thm:GKD} 
Under the above assumptions we have
$$X_\infty[\gamma_0] \cong Y[y]\times D^\infty,$$

where $Y[y]$ is the formal neighbourhood of a (possibly singular)
scheme $Y$ of finite type over $\K$ in a suitable point $y\in Y$ and
$D^\infty$ is a countable product of formal discs, i.e., the formal
spectrum of $\K[[t]]$.
\end{Thm}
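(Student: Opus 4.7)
The strategy is to identify $X_\infty[\gamma_0]$ via its functor of points on the category of test-rings $A$ and to reduce the isomorphism to a direct application of Theorem \ref{thm:RankTheorem_for_TestRings}. An $A$-point of $X_\infty[\gamma_0]$ is an $A$-arc $\gamma \in \Aarc_A^n$ satisfying $f_i(\gamma)=0$ for the defining equations $f=(f_1,\ldots,f_N)$ of $X\subseteq\A_\K^n$ and reducing to $\gamma_0$ modulo $\n$. Writing $\gamma = \gamma_0 + \eta$ with $\eta$ having all coefficients in $\n$, the vanishing condition becomes $F(\eta):=f(\gamma_0+\eta)=0$, and Taylor expansion gives $F=\ell+h$ with $\ell=\p f(\gamma_0)\in\K[[t]]^{N\times n}$ and $h$ polynomial of degree at least two in the entries of $\eta$. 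Since each monomial of $h$ gathers at least two $\n$-valued coefficients, Lemma \ref{lem:contractive_map_nilpotent_sufficient_criterion} combined with the nilpotence of $\n$ shows that $h$ is contractive in the refined order $\W$.

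Next I would produce a scission of $\ell$ by exploiting the hypothesis $\gamma_0\notin(\Sing X)_\infty$: it guarantees that some $(n-d)$-minor of $\p f(\gamma_0)$ is non-zero in $\K[[t]]$, so that $\ell$ has generic rank $n-d$ over the PID $\K[[t]]$. Applying the Smith normal form procedure used in the proof of Theorem \ref{localtrivialitytheorem} yields a $\K[[t]]$-linear scission $\sigma$ of bounded negative $t$-order. Because $\W$ is lexicographic in $(\w,\pow)$, the finite loss of $t$-order incurred by $\sigma$ is dominated by the gain of $\n$-level arising from the degree-$\geq 2$ character of $h$, so $\sigma h$ remains contractive in $\W$. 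The constant rank condition at $\eta=0$ holds because $\p f(\gamma_0+\eta)\equiv \p f(\gamma_0)\pmod{\n}$ whenever the coefficients of $\eta$ lie in $\n$, hence the initial module of the image of the tangent map is independent of the base-point (cf.\ Proposition \ref{pointwiserank}).

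Theorem \ref{thm:RankTheorem_for_TestRings} then produces locally invertible textile maps $u,v$ over $A$ with $v\circ F\circ u^{-1}=\ell$, so that $X_\infty[\gamma_0](A)$ is in functorial bijection with the $A$-valued kernel of $\ell$ restricted to $\n$-valued arcs. The Smith decomposition of $\ell$ splits this kernel as a direct product of two functors: a \emph{free} factor coming from the $d$ zero rows of the Smith form, isomorphic to a countable product of copies of $\n_A$ and therefore representable by the countable product $D^\infty$ of formal discs $\hbox{Spf}\,\K[[t]]$; and a \emph{constrained} factor cut out by finitely many polynomial relations of bounded $t$-degree on a finite set of variables, whose representing formal scheme is the formal neighbourhood $Y[y]$ of a finite-type $\K$-scheme at a point $y$. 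Yoneda's Lemma promotes this bijection to the desired isomorphism of formal schemes $X_\infty[\gamma_0]\cong Y[y]\times D^\infty$.

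The principal obstacle is the careful management of the refined-order estimates in the nilpotent setting: verifying that the scission built at the $\K[[t]]$-level persists as a contraction-inducing operator after base change to an arbitrary test-ring, and that $\sigma h$ genuinely stays contractive in $\W$ on the whole neighbourhood of $\n$-valued arcs rather than on a deeper $\m$-adic sub-neighbourhood. A secondary delicate point is recognising the finite-dimensional part of the linearized kernel as the formal neighbourhood of an honest finite-type $\K$-scheme $Y$ in an $A$-functorial way, rather than as a mere representable subfunctor.
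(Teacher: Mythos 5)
Your proposal linearizes $F(\eta)=f(\gamma_0+\eta)$ directly at $\eta=0$ with tangent map $\ell=\p f(\gamma_0)$, but this cannot succeed, and the failure points to a missing idea rather than a repairable technicality. First, your claim that $\sigma h$ is contractive in the refined order $\W$ has the lexicographic dominance backwards: the scission $\sigma$ loses $t$-order (the first, dominant component of $\W$), while the degree-$\geq 2$ nonlinearity $h$ gains only in the $\n$-filtration (the second, subordinate component). Lemma~\ref{lem:contractive_map_nilpotent_sufficient_criterion} does not apply because after the backward index shift by $\sigma$ the coefficient $(\sigma h)_\alpha$ depends on coefficients $x^j_l$ with $l>|\alpha|$. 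Concretely, for $X=V(y^2-x^3)$ and $\gamma_0=(t^2,t^3)$, take $a=(\epsilon t,0)$, $b=0$, $\epsilon\in\n\setminus\n^2$, $\epsilon^3\neq 0$; then $h(a)-h(b)=-3\epsilon^2t^4-\epsilon^3t^3$, and dividing by $\p_y f(\gamma_0)=2t^3$ gives $\sigma h(a)-\sigma h(b)$ a nonzero $t^0$-coefficient $-\epsilon^3/2$, so its refined order has first component $0$, hence $\W(\sigma h(a)-\sigma h(b))<(1,\pow\epsilon)=\W(a-b)$ and contractivity fails. Second, even if the Rank Theorem for test-rings applied, its output would exhibit the $A$-points of $X_\infty[\gamma_0]$ as the $\n$-valued kernel of a fixed $\K[[t]]$-linear map, which is a free $A[[t]]$-module, giving only the $D^\infty$ factor; a possibly singular $Y$ could never appear. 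The ``constrained factor'' you extract from the Smith form does not exist: the kernel of a Smith-normalized matrix over $A[[t]]$ is free, with no residual polynomial relations.

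The essential missing ingredient is the Weierstrass decomposition. The paper does not fix $\gamma_0$ and linearize; it changes coordinates via $\psi_q$, splitting the deformation into a finite-dimensional, genuinely nonlinear piece $(q,\bar x,\bar y)$ --- the Weierstrass polynomial of $\p_y f$ along the \emph{deformation} (not along $\gamma_0$) together with the truncations obtained by Weierstrass division by $q^{r+1}$ and $q^r$ --- and a residual piece $(\xi,\eta)$. The triple $(q,\bar x,\bar y)$, constrained by the algebraic conditions (C) and equations (E), is precisely what defines the finite-type, possibly singular, scheme $Y$; it has no counterpart in a direct linearization at $\gamma_0$, because $q$ and the truncations vary with the $A$-deformation. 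Only after fixing $(q,\bar x,\bar y)$ does the residual map $\varphi(\xi,\eta)=f(\bar x+q^{r+1}\xi,\bar y+q^r\eta)-f(\bar x,\bar y)$ become quasi-submersive with image in $(q)^{r+1}$, which simultaneously dispenses with checking the rank condition and makes the order condition (or, when $q$ is a unit, the $\W$-contractivity via Lemma~\ref{lem:contractive_map_nilpotent_sufficient_criterion}) genuinely verifiable; the $D^\infty$ factor comes from the linearized $(\xi,\eta)$-kernel, not from the Smith form of $\p f(\gamma_0)$.
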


\begin{Rem}
Denote the structure sheaf of $X_\infty$ by $\Oo_\infty$. The assertion can be reformulated (see \cite{reguera}) as

$$\widehat \Oo_{\infty, \gamma_0} \cong B[z_i ; i \in \N]\widehat \ .$$

Here $B$ denotes a complete local Noetherian ring (corresponding to $Y[y]$) and $B[z_i ; i \in \N]\widehat \  $ is the completion of $B[z_i; i\in \N]$ w.r.t. the ideal $(z_i ; i \in \N)$.\\
\end{Rem}

We will show that in the case of hypersurfaces this local factorization theorem can be seen as a consequence of the Rank Theorem (over a field of characteristic $0$). The restriction to hypersurfaces is not essential but keeps notation simple. The relation between these two results is as follows: As in \cite{drinfeld} we use the characterization of the formal neighbourhood by its functor of points from test-rings to sets. A {\it test-ring} $A$ is a local, commutative, and unital $\K$-algebra with nilpotent maximal ideal $\n$ and residue field $\K$. The $A$-points of $X_\infty[\gamma_0]$ are $A[[t]]$-points of $X$ whose reduction modulo $\n$ coincides with $\gamma_0$.\\

Consider a hypersurface $X$ given by $f \in \K[x_1,\ldots, x_n,y]$ where $\K$ is a field of characteristic $0$. Let $\gamma_0=(x_0,y_0)\in \K[[t]]^{n+1}$ be a $\K$-arc of $X$, i.e., $f(\gamma_0)=0$. Moreover, we assume that $\gamma_0 \not\in (\Sing X)_\infty$, thus, w.l.o.g. $\w_t \p_y f(x_0,y_0)=d>0$ (the case $d=0$ is trivial and will henceforth be excluded). In this setting finding $A$-points for $X_\infty[\gamma_0]$ translates into finding $\gamma=(x,y)\in A[[t]]^n\times A[[t]]$ with $\gamma = \gamma_0 \mod \n$ and 

$$f(x,y)=0.$$

By the Weierstrass Preparation Theorem there are a unit $u \in A[[t]]^\times$ and a distinguished polynomial $q \in A[t]$, both unique, such that
$$\p_y f(x,y)=u\cdot q.$$ 
Taking this modulo $\n$ shows that $\deg_t q =d$ ($d$ depends on $\gamma_0$ but not on $\gamma$). For any integer $r > 1$ we define the textile isomorphism

$$\psi_q\colon A[[t]]^{n+1}\ra A[t]^n_{<(r+1)d}\times A[t]_{<rd}\times A[[t]]^n\times A[[t]]; (x,y)\mapsto (\bar x, \bar y, \xi, \eta)$$
induced by Weierstrass division $x=\bar x + q^{r+1}\xi$, $y=\bar y+q^r\eta$. Now work with $f\circ \psi_q^{-1}$. This will have the advantage that after fixing $\bar x$ and $\bar y$ the induced map is a quasi-submersion. Note: $\psi_q$ is not textile for general $A$ -- we are strongly using the nilpotency of $\n$ in $A$. By Taylor expansion of $\p_y f(\bar x+ q^{r+1}\xi, \bar y+q^r\eta)$ it is easy to see that 
\begin{equation}\label{eq:partial_is_multiple_of_q}
\p_y f(\bar x, \bar y)=\tilde u\cdot q
\end{equation} 
for some $\tilde u\in A[[t]]^\times$. Fixing $\bar x$ and $\bar y$ consider
$$\varphi\colon A[[t]]^n\times A[[t]] \ra A[[t]]; (\xi,\eta)\mapsto f(\bar x + q^{r+1}\xi, \bar y + q^r\eta)-f(\bar x, \bar y),$$ which has Taylor expansion

$$\varphi(\xi,\eta)=\p_x f(\bar x, \bar y)q^{r+1}\cdot \xi + \p_y f(\bar x, \bar y)q^r\cdot \eta + H(q^{r+1}\xi, q^r\eta),$$

where $H$ is at least of order two in its entries. Denote the linear part of $\varphi$ by $\ell$. From equation (\ref{eq:partial_is_multiple_of_q}) we deduce that $\im(\ell)=(q)^{r+1}$ and by construction for $h(\xi,\eta)=H(q^{r+1}\xi,q^r\eta)$ the relation $\im(h)\subseteq \im(\ell)$ holds, i.e., $\varphi$ is quasi-submersive. Therefore we don't have to check the rank condition. 
Set $e=\w_t (q)$ and define $\sigma$ to be the scission for $\ell$ constructed via division by $\tilde u q^{r+1}$. It is obvious that the contraction degrees satisfy $\kappa(\ell)=-(r+1)e$ and $\kappa(h)\geq 2re$. In the case that $e> 0$ the order condition is fulfilled: $\kappa(h)=2re>(r+1)e=\kappa(\sigma)$. If $e=0$ the situation is more complicated. The composition $\sigma h$ is still contractive with respect to the refined order $\W$ by Lemma \ref{lem:contractive_map_nilpotent_sufficient_criterion}. Therefore, the linearizing automorphism $u$ in Thm. \ref{rankthm} can be constructed. This allows linearization of $\varphi$. Solvability of the linearized equation
\begin{equation}\label{eq:linearized_equation}
\ell(\xi,\eta)=f(\bar x,\bar y) 
\end{equation}

is given, since by assumption $f(\bar x, \bar y)\in (q)^{r+1}$. In fact, the solution space to (\ref{eq:linearized_equation}) is a free $A[[t]]$-module of rank $n$.\\

Conversely, given $(q,\bar x, \bar y)\in A[t]\times A[t]_{<d(r+1)}\times A[t]_{<rd}$ fulfilling conditions (C) and equations (E) below, the induced map $\varphi$ is linearizable and the corresponding linear equation (\ref{eq:linearized_equation}) has a solution. The conditions and equations are as follows:

$$
\textnormal{(C)}\ \ \ \ \left\{
\begin{array}{ll}
 \textnormal{C}_q: & q=t^d \mod \n, q\ \textnormal	{monic}\\
 \textnormal{C}_{\bar x}: & \bar x \mod \n = x_0\mod(t)^{d(r+1)}\\
 \textnormal{C}_{\bar y}: & \bar y \mod \n = y_0\mod(t)^{dr}
\end{array}
\right.
$$

and 

$$
\textnormal{(E)}\ \ \ \ \left\{
\begin{array}{ll}
 \textnormal{E}_1: & \p_y f(\bar x,\bar y)= 0 \mod (q)\\
 \textnormal{E}_2: & f(\bar x,\bar y)=0 \mod (q^{r+1}).\\
\end{array}
\right.
$$

Indeed, by ($\textnormal{E}_1$) we see that $\p_y f(\bar x, \bar y)=B\cdot q$ for some $B\in A[[t]]$ (here we naturally identify $\bar x, \bar y$ with elements in $A[t]$). From (C) it follows that $B\in A[[t]]^\times$.  Therefore, $\im(\ell)=(q)^{r+1}$; $\varphi$ fulfills rank and order condition as follows analogously to the presentation above, and the equation 
\begin{equation}\label{eq:translated_drinfeld}
\varphi(\xi,\eta)=f(\bar x,\bar y)
\end{equation}

has a solution by ($\textnormal{E}_2$). In fact the set of solutions to equation (\ref{eq:translated_drinfeld}) is isomorphic to some $\Aarc_A^n$.\\

To sum up, any $A$-deformation of the arc $\gamma_0$ is determined by $(q,\bar x, \bar y)$ fulfilling conditions (C) and (E) and by $(\xi,\eta)$, which have to fulfill a linear equation. The first data defines a scheme $Y$ of finite type over $\K$. As in \cite{drinfeld} for any $\K$-algebra $R$ the set $Y(R)$ consists of triples $(q,\bar x, \bar y)$ fulfilling (C) and (E) (with $A$ replaced by $R$); the distinguished element $y\in Y$ appearing in the Grinberg-Kazhdan-Drinfeld formal arc theorem corresponds then to $(t^d,x_0 \mod (t)^{(r+1)d},y_0\mod (t)^{rd})$.



\subsection{On the solutions of polynomial differential equations} \label{polynomialvectorfields}
Textile maps appear naturally in the context of differential and difference equations. For $q,n\in \N$ and polynomials $P_i$ in $qn$ indeterminates we consider the following system of ordinary differential equations (with constant coefficients):

$$ (*) \left\{
\begin{array}{ccc}
x_1^{(q)} & = & P_1(x_1, \ldots, x_n, x_1^{(1)}, \ldots, x_n^{(1)},\ldots, x_1^{(q-1)},\ldots, x_n^{(q-1)})\\
\vdots & { } & {} \\
x_n^{(q)} & = & P_n(x_1, \ldots, x_n, x_1^{(1)}, \ldots, x_n^{(1)},\ldots, x_1^{(q-1)},\ldots, x_n^{(q-1)})
\end{array}
\right.
$$

We are searching for solutions $x=(x_1,\ldots, x_n) \in \K[[t]]^n$ where $x_i^{(l)}$ denotes the $l$-th derivative  (with respect to $t$) of a power series $x_i \in \K[[t]]$. The problem of finding solutions to this system can be formulated via cords and textile maps as follows: Identify again $\K[[t]]$ with $\Aarc$ and denote by $D$ the {\it differential operator} $$D:\Aarc \ra \Aarc; a=(a_i)_{i\in \N} \mapsto \left( (i+1)a_{i+1}\right)_{i\in \N}.$$
For $j\in \N$ its $j$-th power is written as $D^j$, $D^0:=\id$. We denote by $p_i$ the tactile map induced by $P_i$. Solutions to the system $(*)$ are zeros of the textile map $f:\Aarc^n \ra \Aarc^n$;

$$f(x_1,\ldots, x_n) = \left(
\begin{array}{ccc}
D^q(x_1) & - & p_1(x_1, \ldots, x_n,\ldots, D^{q-1}x_1,\ldots, D^{q-1}x_n)\\
$ $ & \vdots & {} \\
D^q(x_n) & - & p_n(x_1, \ldots, x_n,\ldots, D^{q-1}x_1,\ldots, D^{q-1}x_n)
\end{array}
\right)$$

Taking into account initial conditions $D^jx_i(0)=x_{0,i}^j$, $i=1,\ldots,n$, $j=0,\ldots, q-1$ yields a regular family $$f_\B: \B\times (\m^q\cdot \Aarc^n) \ra \Aarc^n,$$ where $\B=(\K^q)^{n}$. Denote by $f_{x_0}$ the restricted map $f_\B(x_0, -)$. Zeros of $f_{x_0}$ are solutions of the differential equation with initial condition $x_0 \in \B$.

\begin{Thm}\label{Thm:differential_equation}
Solutions of the polynomial differential equation $(*)$ with initial vector $x_0$ are isomorphic to solutions $y \in \m^q\cdot \Aarc^n $ of a linear system of the form $$\ell(y) = b_0,$$ where $\ell:\Aarc^n \ra \Aarc^n$ is textile linear  and $b_0 \in \Aarc^n$.
\end{Thm}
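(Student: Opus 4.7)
The plan is to apply the Rank Theorem~\ref{rankthm} to a suitable shift of $f_{x_0}$. Let $\rho\in \K[t]_{<q}^n$ be the Taylor polynomial encoding the initial data $x_0=(x_{0,i}^j)$, so that any solution $x$ of $(*)$ with these initial conditions has the form $x=\rho+y$ with $y\in \m^q\cdot\Aarc^n$. Putting $f_{x_0}(y):=f(\rho+y)$, $c_0:=f_{x_0}(0)\in \Aarc^n$, and $\widetilde f:=f_{x_0}-c_0$, one has $\widetilde f(0)=0$ and $f_{x_0}(y)=0 \Leftrightarrow \widetilde f(y)=-c_0$, so it will suffice to linearize $\widetilde f$ around $0$.

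Taylor expansion at $0$ gives $\widetilde f=\ell+h$ with linear part
\[
  \ell(y)_i \;=\; D^q y_i + \sum_{j,\,l<q} c_{i,j,l}(t)\,D^l y_j ,
\]
where the $c_{i,j,l}\in \K[[t]]$ encode the partial derivatives of the $P_i$ evaluated at the jets of $\rho$, and $h$ is a polynomial of degree $\geq 2$ in the variables $D^l y_j$. I would then check that $\ell\colon \m^q\cdot\Aarc^n \to \Aarc^n$ is bijective: the coefficient $y_{j,i+q}$ appears in $(D^q y)_i$ with non-zero scalar factor $(i+1)(i+2)\cdots(i+q)$, whereas the correction $\sum c_{i,j,l}D^ly_j$ only involves $y_{j,s}$ with $s\leq i+q-1$, so $\ell(y)=z$ can be solved by forward recursion on $i$ starting from $y\in \m^q\cdot \Aarc^n$. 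The textile linear inverse $\sigma:=\ell^{-1}$ is in particular a scission, and an inspection of this recursion gives $\kappa(\sigma)=q$.

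For the order condition, every monomial of degree $d\geq 2$ in the variables $D^ly_j$ (with $l\leq q-1$) has its $i$-th coefficient polynomial in $y_{j,s}$ with $s\leq i+q-1$; Lemma~\ref{Lem:characterize_contractiveness} then gives $\kappa(h)\geq -(q-1)$, whence $\kappa(\sigma h)\geq q-(q-1)=1>0$, so $\sigma h$ is contractive. Since $\ell$ is surjective, $\widetilde f$ is a submersion (in particular quasi-submersive), so the Rank Condition is automatic by the discussion after Theorem~\ref{rankthm}. Applying that theorem produces textile invertible maps $u,v$ with $v\widetilde f u^{-1}=\ell$; setting $\tilde y:=u(y)$ and $b_0:=v(-c_0)\in \Aarc^n$ converts $f_{x_0}(y)=0$ into the linear system $\ell(\tilde y)=b_0$, and $u$ provides the required textile isomorphism of solution sets.

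The technical heart of the argument is the shift bookkeeping for $\sigma h$: the $+q$ gain from integrating via $\sigma$ beats the $-(q-1)$ loss from the derivatives inside $h$ by exactly one order, so the estimate is tight. Carrying it through cleanly while accounting for the $\K[[t]]$-coefficients $c_{i,j,l}(t)$ that enter the linear part $\ell$ (and in particular verifying bijectivity of $\ell$ in their presence) is the main step that needs care; everything else is formal consequence of the Rank Theorem.
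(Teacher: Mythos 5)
Your proof is correct and follows the paper's strategy: shift $f_{x_0}$ so it vanishes at the origin, Taylor-expand, and apply the Rank Theorem with scission contraction degree $q$ beating the $-(q-1)$ loss from the derivatives in the nonlinear part. The small deviations are worth noting, as they slightly streamline the argument. Where the paper invokes injectivity of $T_0\widehat f$ and flatness (Proposition~\ref{flatisrank}) to get the Rank Condition, and Construction~\ref{existenceofscissions} plus Lemma~\ref{lemmavectorfield} to produce a scission with $\kappa(\sigma)=q$, you observe directly that $\ell$ is bijective from $\m^q\cdot\Aarc^n$ onto $\Aarc^n$ (forward recursion on the coefficient index, with the leading factor $(i+1)\cdots(i+q)$ non-zero since $\Char \K=0$), take $\sigma=\ell^{-1}$ as the scission with $\kappa(\sigma)=q$ immediately, and obtain the Rank Condition for free from surjectivity of $\ell$ via the quasi-submersion remark after Theorem~\ref{rankthm}. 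These two routes are equivalent once one has bijectivity of $\ell$ — which the paper does not state explicitly but which follows from the same forward-recursion observation — and your version has the advantage of making the scission and its contraction degree visible without appealing to the general canonical-form machinery. The final bookkeeping (setting $\tilde y=u(y)$, $b_0=v(-c_0)$) matches the paper's conclusion.
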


The proof is mostly a consequence of the following lemma:

\begin{Lem}\label{lemmavectorfield}
(a) Let $\nu_1,\ldots, \nu_p \in \N \cup \{0 \}$ and $\mu_1,\ldots, \mu_p \in \N$; the map $$g: \Aarc \ra \Aarc; x\mapsto \left(D^{\nu_1}x\right)^{\mu_1}\cdots \left(D^{\nu_p}x\right)^{\mu_p}$$ has contraction degree $\kappa(g)\geq-\max\{\nu_1,\ldots, \nu_p\}$.\\
(b) Let $U$ be a neighbourhood of $0$ and let $h:U \subseteq \Aarc \ra \Aarc$ be a textile map. Then $$\kappa(T_0 h) \geq \kappa(h).$$
\end{Lem}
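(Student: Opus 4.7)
The plan is to handle the two parts separately, both reducing to the calculus of contraction degrees developed in Section \ref{sectionformalmaps}.

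For (a), I would decompose $g$ as a Cauchy product of the $p$ factors $(D^{\nu_i}x)^{\mu_i}$ and bound each factor via composition. First I would record that the differential operator satisfies $\kappa(D) \geq -1$, since $(Da)_i = (i+1)a_{i+1}$ gives $\w(D(a-b)) \geq \w(a-b) - 1$; iterating yields $\kappa(D^{\nu_i}) \geq -\nu_i$. Next, for the monomial map $y \mapsto y^{\mu_i}$ on $\Aarc$, the telescoping identity $y^{\mu_i} - z^{\mu_i} = (y-z)\sum_{k=0}^{\mu_i-1} y^{\mu_i-1-k} z^k$ together with the bound $\w \geq 0$ on the second factor gives $\kappa(y \mapsto y^{\mu_i}) \geq 0$. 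The subadditivity of $\kappa$ under composition (recalled after Lemma \ref{Lem:characterize_contractiveness}) then provides $\kappa((D^{\nu_i}x)^{\mu_i}) \geq -\nu_i$ for each $i$.

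The remaining step for (a) is to pass from the individual factors to their Cauchy product. This is not stated verbatim in Section \ref{sectionformalmaps}, so I would insert a brief argument: from
\[(f_1 f_2)(a) - (f_1 f_2)(b) = (f_1(a) - f_1(b)) f_2(a) + f_1(b)(f_2(a) - f_2(b))\]
and the nonnegativity of orders in $\Aarc$, one obtains $\kappa(f_1 f_2) \geq \min\{\kappa(f_1), \kappa(f_2)\}$; induction on $p$ then gives $\kappa(g) \geq \min_i(-\nu_i) = -\max_i \nu_i$.

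For (b), I would appeal directly to Lemma \ref{Lem:characterize_contractiveness}. If $\kappa(h) \geq \Gamma$, each component $h_\alpha$ is a polynomial in the variables $c_\nu$ with $L\cdot \nu \leq L\cdot \alpha - \Gamma$. Its linear part $(T_0 h)_\alpha = \sum_\nu (\partial_\nu h_\alpha)(0)\, c_\nu$ is then a linear polynomial in (a subset of) that same restricted set of variables, so the converse direction of the lemma identifies $T_0 h$ as $\Gamma$-shifting, i.e., $\kappa(T_0 h) \geq \Gamma$. The main obstacle, such as it is, will be the Cauchy product estimate in part (a), since Section \ref{sectionformalmaps} records only the sum and composition rules for $\kappa$ explicitly; it is precisely the nonnegativity of orders in $\Aarc$ that makes the product estimate collapse to the minimum of the factor estimates, and once that is in hand everything else is routine bookkeeping.
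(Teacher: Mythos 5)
Your proposal is correct, and part (b) follows the same route as the paper: the paper, too, proves (b) by observing that $\Gamma$-shifting means each $h_\alpha$ depends only on coefficients $c_\nu$ with $L\cdot\nu \le L\cdot\alpha - \Gamma$ (Lemma~\ref{Lem:characterize_contractiveness}), and that the linear part inherits this support restriction. (The paper's displayed bound $\gamma \le i + \kappa(h)$ in the proof of (b) has a sign slip; your version $L\cdot\nu \le L\cdot\alpha - \Gamma$ is the one that matches Lemma~\ref{Lem:characterize_contractiveness}.)

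For (a), the paper simply declares the statement obvious and offers no argument, so there is nothing to compare against; you have supplied a complete and correct one. Two remarks on your route. First, your Cauchy-product estimate $\kappa(f_1 f_2) \ge \min\{\kappa(f_1),\kappa(f_2)\}$ is indeed not recorded in Section~\ref{sectionformalmaps}, but it is a clean consequence of the identity you wrote together with $\w \ge 0$ on $\Aarc$, so inserting it is legitimate. Second, one can shortcut the argument slightly by avoiding the Cauchy-product lemma altogether: factor $g$ as $m \circ \delta$ with $\delta\colon x \mapsto (D^{\nu_1}x,\dots,D^{\nu_p}x)$, $\kappa(\delta) \ge -\max_i\nu_i$ by the same order computation you did for $D$, and $m\colon (y_1,\dots,y_p)\mapsto y_1^{\mu_1}\cdots y_p^{\mu_p}$ with $\kappa(m) \ge 0$, which is exactly the paper's own monomial Example in Section~\ref{sectionformalmaps} with $U=\Aarc^p$. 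The composition rule then gives the bound directly. Both approaches are valid; yours is self-contained, while the shortcut reuses a displayed computation the paper already provides.
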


\begin{proof} (Lemma)
(a) is obvious. For (b): write $h_i$ for the $i$-th coefficient of $h$. Then

\begin{eqnarray*}
T_0 h \cdot y = \p h \bullet y & = & \sum_{\gamma\in\N}(\p_\gamma h)(0)\cdot y_\gamma\\
& = & \left(\sum_{\gamma\leq i + \kappa(h)} (\p_\gamma h_i)(0) y_\gamma \right)_{i\in \N}.
\end{eqnarray*}
Hence, $\kappa(T_0h)\geq \kappa(h)$.

\end{proof}

To prove the theorem we proceed as follows: Using Taylor expansion gives $f_{x_0}$ as $f_{x_0}(y)=f(x_0) + \widehat f(x_0;y)$ with $\widehat f(x_0;0)=0$. This $\widehat f(x_0, -):\m^q\cdot \Aarc^n \ra \Aarc^n$ fulfills the assumptions of the Rank Theorem: According to (a) in Lemma \ref{lemmavectorfield}, $\kappa(p)\geq -(q-1)$, and further by (b) $\kappa(T_0 p)\geq -(q-1)$ follows. Since $T_0 \widehat f = D^q - T_0 p$, we deduce $\kappa(T_0 \widehat f) = - q$. By Construction \ref{existenceofscissions} there is a scission $\sigma$ for $T_0 \widehat f$ with $\kappa(\sigma)=+q$. Hence, $\sigma$ composed with $h:= f-T_0 \widehat f$ is contractive. The Rank Condition is fulfilled since $T_0 \widehat f$ is injective and, thus, flat (see Proposition \ref{flatisrank}).\\ By the Rank Theorem there exist linearizing automorphisms $u,v$ such that $v\widehat f u^{-1} =\ell$ with $\ell:= T_0 \widehat f$. Similar to section \ref{sectionhypersurfacecase} we conclude that solutions of $\ell(x_0; \eta)= v(-f(x_0))$ are isomorphic to solutions of $f(x_0 + u^{-1}(\eta))=0$.

\subsection{On Tougeron's Implicit Function Theorem} \label{sectiontougeron}
Using Theorem \ref{rankthm} we prove the following result, known as Tougeron's Implicit Function Theorem (see Proposition 1, p. 206 ff. in \cite{tougeron}):

\begin{Thm}
Consider variables $x=(x_1,\ldots, x_n)$ and $y=(y_1,\ldots,y_p)$; let $F \in \K[[x,y]]$ with $F(0,0)=0$ and $\K$ a complete valued field of characteristic $0$. Denote by $\delta$ the Jacobian of $F$ w.r.t. $y$ evaluated at $(x,0)$, by $A$ the Jacobian ideal of $F$, i.e. $A=(\delta_1,\ldots, \delta_p)$ with $\delta_i=\p_{y_i}F(x,0)$, and by $B$ any proper ideal in $\K[[x]]$. If $F(x,0)\in B A^2$, then there exist $y_1(x),\ldots, y_p(x) \in BA$ such that $F(x,y_1(x),\ldots, y_p(x))=0$. 
\end{Thm}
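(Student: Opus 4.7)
The strategy is to apply the Rank Theorem (Theorem \ref{rankthm}) to an auxiliary tactile map obtained by ``multiplying the unknowns by the $\delta_i$'s'', and then to specialise the resulting linearising power series at an element of $B$.

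First I would introduce $r=p$ new vector variables $y^1,\ldots,y^r$ with $y^i=(y^i_1,\ldots,y^i_p)$ and consider the tactile map
$$\tilde G\colon \m\cdot\Ce^{rp}\lra\Ce,\qquad (y^1,\ldots,y^r)\mapsto F\bigl(x,\textstyle\sum_{i=1}^r \delta_i y^i\bigr) - F(x,0).$$
Then $\tilde G(0)=0$, and Taylor expansion of $F$ around $(x,0)$ identifies the tangent map of $\tilde G$ at $0$ with the $\K[[x]]$-linear map $\ell(y^1,\ldots,y^r)=\sum_{i,j}\delta_i\delta_j\,y^i_j$, whose image is the ideal $A^2$. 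The same expansion shows that the higher order part $h=\tilde G-\ell$ has image contained in $A^2=\im(\ell)$, since every monomial of $h$ carries at least two $\delta$-factors. Hence $\tilde G$ is a quasi-submersion, the Rank Condition is automatic, and the linearising map $v$ appearing in Theorem \ref{rankthm} is the identity.

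Next I would construct a scission $\sigma$ of $\ell$ from a standard basis of $A^2$ as in section \ref{sectionscission1}, giving $\kappa(\sigma)\geq -M$ for some bound $M$ on the orders of that standard basis. A direct order estimate, analogous to the one performed in section \ref{sectionhypersurfacecase}, then shows that for $l$ sufficiently large the composition $\sigma h$ is contractive on $U:=\m^l\cdot\Ce^{rp}$: on $U$ each monomial of $h$ supplies at least two factors of $\delta$ (compensating the $-M$ shift of $\sigma$) and at least two factors of $y^i_j$ (producing a strictly positive $\m$-shift). Theorem \ref{rankthm} then yields a locally invertible tactile automorphism $u\colon U\to \Ce^{rp}$ such that $\tilde G\circ u^{-1}=\ell$. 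Because $u=\id+\sigma h$ is tactile, so is its inverse, and we may write $u^{-1}=(Y^1,\ldots,Y^r)$ for formal power series $Y^i\in \K[[x,y^l_k]]^p$ with $Y^i(x,0)=0$. The identity of tactile maps on $U$ lifts to the identity of formal power series
$$F\bigl(x,\textstyle\sum_{i=1}^r \delta_i Y^i(x,y)\bigr)=F(x,0)+\sum_{i,j}\delta_i\delta_j\,y^i_j. \qquad (\star)$$

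To conclude, the hypothesis $F(x,0)\in BA^2$ gives $b^i_j\in B\subseteq\m$ with $-F(x,0)=\sum_{i,j}b^i_j\,\delta_i\delta_j$. Substitute $y^i_j=b^i_j$ in $(\star)$: this is legitimate at the level of formal power series because each $b^i_j$ lies in $\m$, and it yields $F\bigl(x,\sum_i \delta_i Y^i(x,b)\bigr)=0$. Setting $y_j(x):=\sum_i \delta_i Y^i_j(x,b)$, the vanishing of the constant term of each $Y^i_j$ in the $y^l_k$ guarantees $Y^i_j(x,b)\in B\cdot\K[[x]]$, whence $y_j(x)\in A\cdot B = BA$, as required. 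The main technical obstacle is the order estimate producing contractivity of $\sigma h$ together with the closely related observation that $(\star)$ must be read as an identity of formal power series, and not merely of functions on $U=\m^l\cdot\Ce^{rp}$: it is precisely this power-series interpretation that licenses the final substitution $y^i_j=b^i_j\in B$ even though $B$ need not be contained in $\m^l$.
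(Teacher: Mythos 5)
Your reduction to linearizing the tactile map $\tilde G(y^1,\ldots,y^r)=F(x,\sum_i\delta_iy^i)-F(x,0)$ is exactly the paper's strategy (its Lemma~\ref{tougeronlemma}), and the observation that $\tilde G$ is a quasi-submersion, so that the Rank Condition holds with $v=\id$, matches the paper as well. Your final passage from the lemma to the theorem by specializing $y^i_j\mapsto b^i_j\in B$ in the power-series identity $(\star)$ is also correct (the paper delegates this reduction to Tougeron). The derivation that each $Y^i_j(x,b)\in B$ because $Y^i_j(x,0)=0$ and ideals of $\K[[x]]$ are $\m$-adically closed is fine.

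However, there is a genuine gap in your construction of the linearizing automorphism. You build $\sigma$ as a standard-basis scission of $\ell$ \emph{\`a la} section~\ref{sectionscission1}, and then assert that $u=\id+\sigma h$ is tactile. That assertion does not follow: the Grauert--Hironaka--Galligo division map $D_{\mathcal F}$, and hence any scission constructed from it, is textile but \emph{not} $\K[[x]]$-linear (it is built from the support-restricted operators $\Phi_1^{-1},\Phi_2$), whereas a linear tactile map $\K[[x]]^m\to\K[[x]]^q$ is necessarily $\K[[x]]$-linear. So there is no reason $\sigma h$, and therefore $u$, should be given by substitution into a power series vector. Without that, you cannot write $u^{-1}=(Y^1,\ldots,Y^r)$ with $Y^i\in\K[[x,y^l_k]]^p$, the identity $(\star)$ is not available as a power-series identity, and the final substitution $y^i_j\mapsto b^i_j$ is not licensed --- your concluding caveat about "reading $(\star)$ as an identity of formal power series" presupposes the very tacticality that your choice of $\sigma$ fails to provide.

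The paper avoids this pitfall by exploiting the special shape of $h$: it writes $h(x,z)=\sum_{i,j}\delta_i\delta_j h_{ij}(x,z)$ with $h_{ij}\in\K[[x,z]]$ (possible because every monomial of $H(x,D\cdot z)$ carries at least two $\delta$-factors) and \emph{defines} $\sigma h$ componentwise as the vector of the $h_{ij}$. This $\sigma h$ is manifestly tactile, is contractive on all of $\m\cdot\Ce^{rp}$ since the $h_{ij}$ are of order $\geq 2$ in $z$, and still satisfies $\ell\,\sigma h=h$, which is all that is needed in the quasi-submersive case. This also renders your shrinking of $U$ to $\m^l\cdot\Ce^{rp}$ unnecessary. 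To repair your argument, replace the generic standard-basis scission by this tailored tactile choice of $\sigma h$, after which the remaining steps go through unchanged.
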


The proof of the theorem reduces to a proof of the following lemma (see \cite{tougeron}):

\begin{Lem}\label{tougeronlemma}
With the notation of the theorem and new indeterminates $y^i=(y^i_1,\ldots, y^i_p)$, $i=1,\ldots,r$, the following assertion holds: For any $r\leq p$ there are power series $Y^i \in \K[[x,y^l_j; 1\leq l \leq r, 1\leq j\leq p]]^p$, $1\leq i \leq r$, so that

$$F(x, \sum_{i=1}^r\delta_i Y^i)=F(x,0)+\delta(\sum_{i=1}^r\delta_i y^i).$$
\end{Lem}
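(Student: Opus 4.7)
The plan is to realize the map $G\colon (y^1,\ldots,y^r)\mapsto F(x,\sum_i\delta_i y^i)-F(x,0)$ as a tactile map that is quasi-submersive and satisfies the hypotheses of the Rank Theorem \ref{rankthm}. The linearizing automorphism $u$ supplied by that theorem will produce the required series $Y^i$ as the components of $u^{-1}$. Throughout, view $x$ as a parameter and the $y^l_j$ (for $1\le l\le r$, $1\le j\le p$) as the cord variables; $G$ is then the tactile map $\m\cdot \K[[x,y^l_j]]^{rp}\to\K[[x,y^l_j]]$ given by the stated formula.

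Taylor-expanding $F$ in its second slot yields $G=\ell+h$ with
$$\ell(y)=\sum_{j=1}^p \delta_j\cdot\sum_{i=1}^r \delta_i y^i_j=\delta\bigl(\textstyle\sum_i\delta_iy^i\bigr),$$
and $h(y)$ the sum of terms of degree $\ge 2$ in $z:=\sum_i\delta_iy^i$. Each component $z_j$ lies in the ideal $(\delta_1,\ldots,\delta_r)\cdot\K[[x,y^l_j]]$, hence every $z^\beta$ with $|\beta|\ge 2$ lies in $(\delta_1,\ldots,\delta_r)^2\subseteq (\delta_1,\ldots,\delta_r)\cdot A=\im(\ell)$. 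Thus $\im(h)\subseteq\im(\ell)$ and $G$ is quasi-submersive, so the constant-rank hypothesis is automatic and the linearizing map $v$ provided by the Rank Theorem will be $\id$.

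Next, construct a scission $\sigma$ of $\ell$ via Section \ref{sectionscission1}: after a generic linear change of the $x$-coordinates that makes the products $\delta_i\delta_j$ (for $1\le i\le r$, $1\le j\le p$) monic, the Grauert--Hironaka--Galligo Division Theorem \ref{divisionthm} yields a scission of $\ell$ by iterated division. In the $\m$-adic topology on the cord space, $\ell$ is $y$-homogeneous of degree $1$ and $h$ is of $y$-degree $\ge 2$; since $\sigma$ acts $\K[[x]]$-linearly and preserves the $y$-filtration, $\sigma h$ is of $y$-degree $\ge 2$ and is therefore contractive on a suitable $\m$-adic neighbourhood of $0$.

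Applying Theorem \ref{rankthm} yields a textile locally invertible $u\colon\m\cdot\K[[x,y^l_j]]^{rp}\to\m\cdot\K[[x,y^l_j]]^{rp}$ with $G\circ u^{-1}=\ell$ (quasi-submersiveness forces $v=\id$, cf.\ the discussion after Theorem \ref{rankthm}). Setting $Y^i(x,y):=\bigl(u^{-1}(y)\bigr)^i\in\K[[x,y^l_j]]^p$ for $1\le i\le r$ gives
$$F\bigl(x,\textstyle\sum_i\delta_iY^i\bigr)-F(x,0)=G\bigl(u^{-1}(y)\bigr)=\ell(y)=\delta\bigl(\textstyle\sum_i\delta_iy^i\bigr),$$
which is the claimed identity. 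The main obstacle I expect is pinning the scission down precisely enough to guarantee contraction of $\sigma h$: one must verify that the division of Theorem \ref{divisionthm} respects the bigrading by $x$- and $y$-order we are using, and that the generic coordinate change does not damage the structure of $\delta$. The degenerate sub-case in which some $\delta_i(0)\neq 0$ (so $\delta_i$ is a unit) can be handled directly by the classical implicit function theorem and folded into the statement a posteriori.
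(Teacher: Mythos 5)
Your overall strategy is right: realize the problem as a quasi-submersive tactile map so that the Rank Condition is free, invoke Theorem~\ref{rankthm}, and read off the $Y^i$ from the linearizing automorphism $u^{-1}$. You also correctly spot where the difficulty lies (pinning down the scission). But the route you propose through a generic Division-Theorem scission does not close the argument, and it is precisely here that the paper does something different and essential.

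The paper does \emph{not} take a generic standard-basis scission. It first decomposes the nonlinear part explicitly as $h(x,z)=\sum_{1\le i,j\le r}\delta_i\delta_j\,h_{ij}(x,z)$ with $h_{ij}\in\K[[x,z]]$ of $z$-order $\ge 2$ (every monomial of $H(x,Dz)$ carries at least two $\delta$-factors), and then \emph{defines} $\sigma$ on the image of $h$ so that $\sigma h$ is the tactile map $z\mapsto(h_{ij}(x,z))_{i,j}$. This tailored scission buys two things that the generic one does not. First, $\kappa(\sigma h)\ge 1$ is immediate because each $h_{ij}$ has $z$-order $\ge 2$; with the generic scission one only gets $\kappa(\sigma)\ge -\max_{\Fst}\w$ for a standard basis $\Fst$ of $(\delta_i\delta_j)$, and the resulting bound $\kappa(\sigma h)\ge 1+2\min_i\w\delta_i-\max_{\Fst}\w$ is easily negative when the orders $\w\delta_i$ are spread out (e.g.\ $\w\delta_1=1$, $\w\delta_2=10$). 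Second, and more importantly, $\sigma h$ is \emph{tactile}, hence so is $u=\id+\sigma h$ and $u^{-1}$; this is exactly the point the paper flags (``we will have to check that the linearizing automorphisms \ldots\ are indeed tactile''), and it is what produces a genuine power-series identity in $\K[[x,z]]$, from which $Y^i$ may be read off with no constraint on the radius of the neighbourhood.

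Your workaround of enlarging the cord space to $\K[[x,y^l_j]]$ and evaluating $u^{-1}$ at the ``identity cord'' $y$ does not escape the problem: that cord has $\m$-adic order exactly $1$, so you need the order condition to hold on $\m^1\cdot\Ce^{rp}$, which is precisely what the generic scission fails to guarantee. You also cannot freely shrink the neighbourhood, since then $y$ falls outside it. Finally, the claim that the Division-Theorem scission ``acts $\K[[x]]$-linearly and preserves the $y$-filtration'' is not correct: Weierstrass/Hironaka division is $\K$-linear (textile) but not $\K[[x]]$-linear, and since the cord inputs $a\in\K[[x,y]]^{rp}$ mix $x$- and $y$-degrees, there is no clean $y$-bigrading of the tactile map $h$ to propagate. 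The fix is to abandon the generic scission entirely and use the structural decomposition $h=\sum\delta_i\delta_j h_{ij}$ as the paper does; once you do, both the order condition and the tactility follow in one line, and the ``degenerate unit case'' needs no separate treatment.
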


The lemma obviously asserts that the tactile map $$g:(y^1,\ldots, y^r)\mapsto F(x,\sum_{i=1}^r \delta_iy^i)$$ can be linearized. As we know from the introduction, linearizing tactile maps in several variables is not always possible. In the situation of the lemma, however, we work with a quasi-submersion. Indeed, the textile map $f$ at hand is induced by a rather arbitrary $F \in \K[[x,y]]$, in which for $y$ a linear combination of $\delta_i$ is substituted. Thus, the image of $f$ is contained in the image of the tangent map given by $\delta$: $f$ is a quasi-submersion, the Rank Condition is obsolete and linearization is possible as soon as the order condition is fulfilled. Note: the Rank Theorem ensures linearization by textile maps. Thus we will have to check that the linearizing automorphisms obtained from the Rank Theorem  in this case are indeed tactile!\\

{\it Proof of Lemma \ref{tougeronlemma}:}
Write $F(x,y)=F(x,0)+\delta y + H(x,y)$ where $H$ is at least quadratic in $y$; set $z=(z^1_1,\ldots, z^r_1,\ldots, z^1_p,\ldots,z^r_p)$. Without loss of generality $F(x,0)=0$; otherwise consider $F(x,y)-F(x,0)$. Define the matrix $D$ as the $p\times p$ block matrix $\textnormal{diag}(\delta\_,\ldots, \delta\_)$, where $\delta\_=(\delta_1,\ldots, \delta_r)$. Then consider the tactile map

$$g:\K[[x]]^{rp} \ra \K[[x]]; z \mapsto F(x,D\cdot z).$$

The tangent map of $g$ with respect to $z$ at $0$ is given by $ $\p g(0)$=\p F(x,0)\cdot D$ and thus $I=\im\left(\p g(0)\right)=(\delta_i\delta_j; 1\leq i\leq p, 1\leq j\leq r)$. We first check the Rank Condition. The non-linear part of $g$ is $H(x,D\cdot z)$. Its image is obviously contained in $I$. So $\im (g) \subseteq I$. The automorphism $v$ of Theorem \ref{rankthm} is in this case defined as $v= \id - (\id - \p g(0) \sigma)g\sigma \p g(0) \sigma$ for a scission $\sigma$ of $\p g(0)$. But since $\id - \p g(0) \sigma$ is a projection onto the complement of $I$ we see that $v = \id$ on $\im(g)$, i.e., $g$ has constant rank. In particular $v$ is a tactile map.\\
It remains to prove that $\sigma h$ is contractive. Here we use a scission $\sigma$ as constructed in section \ref{sectionscission}. For this write $h(x,z)=\sum_{1\leq i,j\leq r} \delta_i \delta_j h_{ij}(x,z)$ for appropriate $h_{ij} \in \K[[x,z]]$. Note that in the definition of $h_{ij}$ we keep track of the order of $i,j$; this is just a technical convention. The scission $\sigma$ is given by division through the $\delta_i\delta_j$, $1\leq i,j \leq r$. Since $h(x,z)$ has this special form there is a natural way how to divide: $\sigma h(x,z)$ has on position $(i-1)\cdot r + j$ the term $h_{ij}$ and $0$ else. It's easy to see that $\kappa(\sigma h)>0$, since the $h_{ij}(x,z)$ are at least of order $2$ in $z \in (x_1,\ldots,x_n)\subseteq \K[[x]]$. Moreover, $\id + \sigma h$ is tactile, since the $h_{i,j}(x,z)$ are power series in $x, z$. Thus the lemma follows from Theorem \ref{rankthm}. \qed

\medskip

\subsection{On Wavrik's Approximation Theorem} \label{sectionwavrik}

Let $F\in \K[[x,y]][z]$ with indeterminates $x=(x_1,\ldots,x_n)$, $y=(y_1,\ldots, y_p)$ and $z=(z_1,\ldots, z_r)$. We are now interested in solutions $(y(x),z(x)) \in \K[[x]]^{p+r}$ to $F(x,y,z)=0$. The solution is called an {\it $N$-order solution} if $\w F(x,y(x),z(x)) \geq N + 1$. The following assertion has been proved for $F$ polynomial in \cite{artin2} and for $F$ a power series in \cite{wavrik_general}: Given a positive integer $q$, there is an $N_0 = N_0(F,q)$ such that any $N$-order solution,  $N\geq N_0$, can be approximated to order $q$ by an exact solution. Let us review this theorem in the special case $n=p=m=1$, $F$ an irreducible polynomial as is proven in Theorem I of \cite{wavrik_plane}:

\begin{Thm}\label{thm:artin_wavrik}
Given $q >0$ and $F(x,y)\in \K[x,y]$ irreducible there exists an $N$ such that if $\bar y\in \K[[x]]$ satisfies $F(x,\bar y)\equiv 0 \mod (x)^N$, then there exists a series $y(x)\in \K[[x]]$ with $F(x,y(x))=0$ and $y(x)\equiv \bar y \mod (x)^q$.
\end{Thm}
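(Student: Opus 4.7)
The strategy is to lift $\bar y$ by linearizing the tactile map
$$G\colon \m^s\cdot \Aarc \ra \Aarc,\quad \hat y \mapsto F(x,\bar y+\hat y)-F(x,\bar y),$$
near $0$ via the Rank Theorem (Theorem~\ref{rankthm}), and then inverting the resulting linear part. The integers $s$ and $N$ will be chosen depending only on $q$ and one discrete invariant of $F$, never on $\bar y$.

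The first step would be to extract uniform control on $\p_y F(x,\bar y)$ from the irreducibility hypothesis. Since $\Char \K=0$ and $F$ is irreducible, $F$ and $\p_y F$ are coprime in $\K(x)[y]$, so there exist $A,B\in \K[x,y]$ and a nonzero $C\in \K[x]$ with $AF+B\p_yF=C$; set $d:=\w C$. Evaluating at $y=\bar y$ modulo $(x)^N$ for any $N>d$ gives $B(x,\bar y)\p_yF(x,\bar y)\equiv C\mod (x)^N$, whence $e:=\w \p_y F(x,\bar y)\le d$, a bound independent of $\bar y$.

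Next, take $s:=q+d$ and $\delta:=\p_yF(x,\bar y)$. Taylor expansion yields $G(\hat y)=\delta\,\hat y + H(x,\bar y,\hat y)$ with $H$ at least quadratic in $\hat y$. The linear part $\ell(\hat y)=\delta\,\hat y$ has image $(\delta)=(x^e)$, and the scission $\sigma$ obtained by Weierstrass division through $\delta$ (cf.~Section~\ref{sectionscission1}) has contraction degree $-e$. The quadratic nature of $H$ on $\m^s$ immediately yields $\im(h)\subseteq \m^{2s}\subseteq (x^e)=\im(\ell)$, so $G$ is quasi-submersive and the Rank Condition becomes automatic. Furthermore $\kappa(\sigma h)\ge 2s-e-s=s-e\ge q>0$, so the Order Condition is satisfied. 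Theorem~\ref{rankthm} then supplies locally invertible textile maps $u$ and $v$ near $0$ with $v\circ G\circ u^{-1}=\ell$.

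To finish, I would solve $G(\hat y)=-F(x,\bar y)$ by instead solving the linear equation $\ell(\eta)=v(-F(x,\bar y))$, i.e., $\eta=v(-F(x,\bar y))/\delta$. Since $v$ is textile with $v(0)=0$, $\w v(-F(x,\bar y))\ge N$, and so $\w \eta\ge N-e$. Choosing $N:=q+2d$ then forces $\eta\in \m^s$, so $\hat y:=u^{-1}(\eta)\in\m^s\subseteq \m^q$, and $y(x):=\bar y+\hat y$ is the desired exact solution with $y(x)\equiv \bar y\mod (x)^q$. The principal difficulty is precisely the \emph{uniformity} of the constants $e$, $s$, $N$---they must depend only on $F$ and $q$, never on the particular $\bar y$---and this is exactly what the B\'ezout identity extracted from irreducibility delivers.
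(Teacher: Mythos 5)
Your proposal is correct and follows essentially the same route as the paper: both proofs hinge on extracting a uniform bound $e\leq d$ on $\w\,\p_yF(x,\bar y)$ from the Bézout/resultant identity $AF+B\,\p_yF=C\in\K[x]$ (the paper phrases this via the discriminant $\Delta$), and both then observe that the resulting tactile map is a quasi-submersion, so that the Rank Condition is automatic and only the order condition needs checking. The one cosmetic difference is that the paper linearizes $z\mapsto F(x,y_0+z)$ around the degree-$d$ truncation $y_0$ of $\bar y$ on the domain $(x)^{d+1}$, whereas you linearize around $\bar y$ itself on the larger domain $(x)^{q+d}$; your choice has the merit of making the dependence on $q$ completely explicit and yielding the clean bound $N=q+2d$, whereas the paper's sketch is silent about how the case $q>d+1$ is handled.
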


\begin{proof}
Using the following argument (see \cite{wavrik_plane}, proof of Theorem I) we can further assume that $\w_x \p_yF(x,\bar y)< r \in \N$: Consider the discriminant $\Delta \in \K[x]$ of $F$ and $\p_y F$ and polynomials $A(x,y)$ and $B(x,y)$ such that $\Delta = AF + B\p_y F$. Denote by $d$ the order of $\Delta$. If $\w F(x,\bar y) > d$ then $\w \p_y F(x,\bar y) < d$.\\
Take an element $y_0\in \K[[x]]$, $\deg y_0 \leq d$, such that $\w F(x,y_0)\geq d+1$ and $\w \p_y F(x,y_0)=e < d$. The textile map $g:(x)^{d+1}\subseteq \K[[x]] \ra \K[[x]]$, $g(z)=F(x,y_0 + z)$ has constant rank, since $\im(F)\subseteq \im(\p_y F(x,y_0))=(x)^e$. It is easy to see that $\kappa(\sigma h)>0$, where $\sigma$ is a scission of $\p_y F(x,y_0)$ as in section \ref{sectionscission} and $h(z)= g(z)-\p_y g(y_0)z$. Thus we may linearize $g$ at $0$ and lift any such $y_0$ to a solution $y(x)$ of $f(x,y)=0$. This linearization is possible as soon as $y_0$ is known up to order $d$.
\end{proof}

\begin{Rem}
One should note that the index $N$ obtained in the proof by using the Rank Theorem is much lower than the one provided in \cite{wavrik_plane}, p. 411. The proof provided there needed $N=2d+q$. This follows from the fact that Tougeron's Implicit Function Theorem is used, which is not as efficient as the Rank Theorem. In a subsequent section of \cite{wavrik_plane} a minimal $N$ is computed by means of resolution of singularities. It is not clear whether this optimal $N$ is the same as the one obtained in the proof above. 
\end{Rem}

\medskip

\subsection{On an inversion theorem by Lamel and Mir} \label{sectionmirlamel}

\def\O{{{\cal O}}}
\def\Aut{{\rm Aut}}

In their paper \cite{lamel_mir}, Lamel and Mir investigate the group of local holomorphic automorphisms of $\C^n$ preserving a real analytic submanifold $M$ (Cauchy-Riemann automorphisms). Their key result to describe this group is as follows. Denote by $\Oo_n$ the space of holomorphic germs $(\C^n,0) \ra \C$, and by $\Aut(\C^n,0)\subset\Oo_n^n$ the group of biholomorphic germs of $(\C^n,0)$. A map $f \in \Oo_n^n$ is said to have {\it generic rank $n$} if its Jacobian determinant $\det (\p f)$ is nonzero as an element in $\Oo_n$. Any $f\in \Oo_n^n$ induces a  map $F:\Aut(\C^n,0) \ra \Oo_n^n$ given by the composition $u \mapsto f\circ u$.

\begin{Thm}\label{mirlamelthm}
(Thm. 2.4 in \cite{lamel_mir}) Let $f \in \Oo_n^n$ be a germ of a holomorphic map of generic rank $n$. There exists a holomorphic map $\Psi: \Oo_n^n \times \hbox{\rm GL}_n(\C) \ra \Aut(\C^n,0)$ inverting $F$ in the sense that $\Psi(F(u), \p u(0))=u$ for all $u\in \Aut(\C^n,0)$. Furthermore $\Psi$ can be chosen such that $\p (\Psi(b,\lambda))(0)=\lambda$ for all $b\in \Oo_n^n$ and $\lambda \in \hbox{{\rm GL}}_n(\C)$.
\end{Thm}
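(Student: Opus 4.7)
The plan is to realize $F\colon u\mapsto f\circ u$ as a parametric tactile map and invoke the Parametric Rank Theorem (Theorem~\ref{parametricrankthm}), with $\lambda = \partial u(0)$ playing the role of the parameter ranging in the constructible set $\Ss = \mathrm{GL}_n(\C)$. Every $u\in \Aut(\C^n,0)$ decomposes uniquely as $u = \lambda x + \tilde u$ with $\lambda\in\mathrm{GL}_n(\C)$ and $\tilde u\in\m^2\cdot\Oo_n^n$, so I would consider the regular family of tactile maps
$$G_\Ss\colon \Ss\times(\m^2\cdot\Ce^n)\lra \Ce^n,\qquad (\lambda,\tilde u)\mapsto f(\lambda x + \tilde u) - f(\lambda x).$$
Recovering $u$ from $(b,\lambda) = (f\circ u,\partial u(0))$ then amounts to solving $G_\lambda(\tilde u) = b - f(\lambda x)$ in $\tilde u$.

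The tangent map of $G_\Ss$ at $\tilde u=0$ is the $\K[[x]]$-linear family $\ell_\lambda\colon v\mapsto \partial f(\lambda x)\cdot v$. Generic rank $n$ of $f$ means $\det\partial f$ is a nonzero element of $\Oo_n$, and since $x\mapsto\lambda x$ is a linear automorphism this persists: $\det\partial f(\lambda x)\neq 0$ for every $\lambda$, so $\ell_\lambda$ is injective. By the Remark following Proposition~\ref{flatisrank}, injectivity yields flatness, and once combined with a scission $\sigma_\lambda$ satisfying the order condition the rank hypothesis of the Rank Theorem will be automatic. Such a $\sigma_\lambda$ is furnished by Proposition~\ref{genericrankscission}: multiply by the adjugate $\partial f(\lambda x)^{\mathrm{adj}}$ and then divide by $\det\partial f(\lambda x)$ via Weierstrass division, giving $\kappa(\sigma_\lambda)\geq -\w\det\partial f$. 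Taylor expansion renders $h_\lambda := G_\lambda - \ell_\lambda$ at least quadratic in $\tilde u$, so restricting to $U = \m^N\cdot\Ce^n$ for any $N > \w\det\partial f$ makes $\sigma_\lambda h_\lambda$ contractive, uniformly in $\lambda$.

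The Parametric Rank Theorem then yields regular invertible families $u_\Ss, v_\Ss$ with $v_\Ss\circ G_\Ss\circ u_\Ss^{-1} = \ell_\Ss$ on $U$. For fixed $\lambda$, the equation $G_\lambda(\tilde u) = b - f(\lambda x)$ becomes the linear equation $\ell_\lambda(w) = v_\lambda(b - f(\lambda x))$ under $w = u_\lambda(\tilde u)$, which by injectivity of $\ell_\lambda$ has the unique solution $w = \sigma_\lambda v_\lambda(b - f(\lambda x))$. This produces the explicit left-inverse candidate
$$\Psi(b,\lambda) := \lambda x + u_\lambda^{-1}\bigl(\sigma_\lambda v_\lambda(b - f(\lambda x))\bigr),$$
whose coefficients are polynomial (respectively Weierstrass-rational) in the coefficients of $b$ and the entries of $\lambda$; by construction $\partial\Psi(b,\lambda)(0) = \lambda$ and $\Psi(f\circ u,\partial u(0)) = u$ whenever $\partial u(0) = \lambda$.

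The principal obstacle will be the $\lambda$-uniform construction of $\sigma_\lambda$: Weierstrass division demands coordinates making $\det\partial f(\lambda x)$ $x_n$-regular, which a priori depends on $\lambda$. I would resolve this by stratifying $\mathrm{GL}_n(\C)$ into finitely many locally closed strata on which a single linear form $L$ makes $\det\partial f(\lambda x)$ simultaneously $L$-regular, running the Parametric Rank Theorem piecewise, and gluing using that any two local left-inverses of $F$ must coincide on the image of $F$. A related smaller issue is extending $\Psi$ from $U = \m^N\cdot\Ce^n$ to all of $\m^2\cdot\Ce^n$: for $u\in\Aut$ with $\partial u(0) = \lambda$, the first $N$ coefficients of $\tilde u$ are determined by $b = f\circ u$ through finite-dimensional linear systems (truncations of the same scission), after which the residual lies in $U$ and the Rank-Theorem inversion above applies.
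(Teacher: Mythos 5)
Your overall architecture is right (same decomposition $u = \lambda x + v$, same tangent map $\ell_\lambda(v) = \partial f(\lambda x)\cdot v$, injectivity from generic rank gives constant rank via the remark after Proposition~\ref{flatisrank}, scission by adjugate plus Weierstrass division), and using the Parametric Rank Theorem instead of the pointwise one is a reasonable variation that would make the dependence on $\lambda$ more visible. But there is a genuine gap in the order-condition step, and your proposed repair of it does not go through.

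Your contraction estimate only yields $\kappa(\sigma_\lambda h_\lambda) > 0$ after shrinking the domain to $U = \m^N\cdot\Ce_n^n$ with $N > \w\det\partial f$. The domain you actually need, however, is $\m^2\cdot\Ce_n^n$, since $v = u - \lambda x$ for $u\in\Aut(\C^n,0)$ is merely of order $\geq 2$. The paper closes this gap with a sharper estimate: after first arranging, by the triangularization argument of Lemma 4.5 of \cite{lamel_mir}, that the vector of initial forms $f_* = (f^1_*,\ldots,f^n_*)$ itself has generic rank $n$, one gets the exact equality $\w\det(\partial f_*) = e_1 + \cdots + e_n - n$ together with the bound $\w(\partial_\beta f^l) \geq e_l - |\beta|$. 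Feeding these into the term-by-term analysis of $(\partial f)^{\mathrm{adj}}\cdot h_\alpha$ produces $\kappa(\varphi_\alpha)\geq m-1\geq 1$ for each $|\alpha|=m\geq 2$ already on $\m^2\cdot\Ce_n^n$; the crude bound $\kappa(h)\geq N$ on $\m^N$ minus $\w\det\partial f$ throws this cancellation away. Your proposed remedy -- determining the first $N$ coefficients of $\tilde u$ from $b$ ``through finite-dimensional linear systems (truncations of the same scission)'' -- is not valid: the recursion equations for the low-degree coefficients of $\tilde u$ receive contributions from the nonlinear part $h_\lambda$ as soon as one passes degree $2\cdot 2 = 4$, so they are genuinely nonlinear. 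Solving them, and doing so holomorphically in $(b,\lambda)$, is precisely the content of the theorem; reducing them to linear form is exactly what the finer contractivity estimate buys. Without it, the argument is circular. (The stratification of $\mathrm{GL}_n(\C)$ to run Weierstrass division $\lambda$-uniformly is a legitimate concern if one insists on the parametric version, but it is a side issue; the central missing ingredient is the refined order estimate.)
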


\begin{proof} We shall show how the Rank Theorem allows to construct a suitable $\Psi$ in the formal context taking the space $\Ce_n=\C[[x_1,\ldots,x_n]]$ of formal power series (cords) instead of $\Oo_n$.  In order to treat convergent power series one has to  refer to the Rank Theorem from \cite{HM}, with the same reasoning as in the formal case. The argument in \cite{lamel_mir} is much more involved, their computations invoking tacitly various instances of the proof of the Rank Theorem.

The map $F:\Ce_n^n \ra \Ce_n^n$ is tactile with Jacobian determinant $\det(\p f) \neq 0 \in \Ce_n$. Our goal is to linearize it locally at the linear part $\lambda x$ of a given $u$.  Decompose  $u\in \Aut(\C^n,0)$ as $u(x) = \lambda \cdot x + v(x)$ with $\lambda\in {\rm GL}_n(\C)$ and $v\in\Ce_n^n$ of order $2$. Consider the Taylor expansion (with the obvious abbreviation) 
$$G(x)=f(u(x)) - f(\lambda x) = \partial f(\lambda x)\cdot  v + \partial^2 f(\lambda x)\cdot v^2 + \ldots ,$$

with linear part $\ell(v)=\partial f(\lambda x)\cdot  v$. Since $f$ is of generic rank $n$ and $\lambda \in \hbox{\rm GL}_n(\C)$, the map $\ell$ is injective. Thus $G$ has constant rank at $0$, by the remark after Proposition \ref{flatisrank}. Next we show that $G$ fulfills the order condition of the Rank Theorem (Thm. \ref{rankthm}). In order to keep notation short we assume that $\lambda$ is the identity matrix. Denote the initial form of lowest degree of $f^i$ by $f^i_*$ and write $e_i$ for the order of $f^i$. By the same triangularization argument as in Lemma 4.5 of \cite{lamel_mir} we may assume that $f_*=(f^1_*, \ldots, f^n_*)$ has generic rank $n$. This gives
 
$$\w \det(\p f_*) = e_1 + \ldots + e_n - n.$$ 

Moreover for $\beta \in \N^n$, $|\beta|=m$, and  $\ l \in \{1,\ldots,n\}$ we have
$$\w (\p_\beta f^l_*) \geq e_l-m$$ 

for $e_l \geq m$ and $\w (\p_\beta f^l_*)=\infty$ else. As the map $f$ has generic rank $n$ a scission $\sigma$ for $\ell$ can be obtained as in Proposition \ref{genericrankscission} by the adjoint matrix of $\partial f$. The nonlinear part $h=G-\ell$ consists of all terms of $G$ which are of order at least $2$ in $v_1, \ldots, v_n$. We write 
$$h(v)=\sum_{{\alpha \in \N^n}\atop {|\alpha|\geq 2}} h_\alpha\cdot v^\alpha,$$

with $h_\alpha \in \Ce_n^n$. For $|\alpha|=m\geq 2$, the coefficient vector $h_\alpha$  has $i$-th component $h_\alpha^i$ involving the $m$-th order partial derivatives of $f^i$. Multiplication with the adjoint matrix of $\p f$ yields in the $i$-th component

\begin{eqnarray*}
\left((\p f)^{adj} \cdot h_\alpha\right)_i  & = & \sum_{l=1}^n (\p f)_{il}^{adj}\cdot h_\alpha^l\\
& = & \sum_{j=1}^n (-1)^{i+l}\det (\p f)^{(l,i)}\cdot h_\alpha^l.
\end{eqnarray*}

Here, $A^{(i,j)}$ denotes the $(n-1)\times (n-1)$ matrix obtained from an $n\times n$ matrix $A$ by deleting the $i$-th row and the $j$-th column. From the last equation it follows that each term in the sum is of type

$$T=(\prod_{1\leq j \leq n\atop j\neq l} \p_{i_j} f^j) \cdot (\p_\beta f^l),$$

where $i_j\in \{1,\ldots,n\}$ and $\beta \in \N^n$, $|\beta|=m$. Obviously 
$$\w T \geq \sum_{i=1}^n e_i - n - m + 1 .$$

This allows to compute the contraction degree $\kappa$ of the map 
$$\varphi_\alpha: v \mapsto \frac{1}{\det (\p f)}\cdot (\p f)^{adj}\cdot h_\alpha \cdot v^\alpha$$ 
if $v$ varies in $(x_1,\ldots, x_n)^2\cdot \Ce_n^n$. We get

\begin{eqnarray*}
\kappa(\varphi_\alpha) 
& \geq & - (\sum_{i=1}^n e_i - n) + (\sum_{i=1}^n e_i - n -m + 1) + (2m -2)\\
& = & m-1.
\end{eqnarray*}

So $\kappa(\varphi_\alpha)>0$ for all $\alpha$ with $|\alpha| =m\geq 2$. Now

$$(\sigma h)(z)=\sum_ {\alpha \in \N^n, |\alpha|\geq 2} \varphi_\alpha(z)$$ 

yields

$$\kappa(\sigma h)\geq \min_\alpha \kappa(\varphi_\alpha) = 1 > 0.$$

The order condition is fulfilled. Therefore  $G$ can be linearized to $\ell$ by local automorphisms of source and target. Once this is done, it suffices to solve the linear equations in order to construct the required map $\Psi$ of the theorem.
\end{proof}

\section{Outlook and unsolved problems}

We conclude the article by a collection of questions related to
the linearization principle and its applications.\\

(a) {\it The Grinberg-Kazhdan-Drinfeld formal arc theorem:} Theorem
\ref{thm:GKD} describes a factorization of the formal neighbourhood of
an arc (not lying completely in the singular locus of the base
variety) into an infinite dimensional smooth part and a formal
neighbourhood $Y[y]$ of a possibly singular scheme of finite
type $Y$ in a point $y\in Y$. Unfortunately, the proofs presented in
\cite{Grinberg-Kazhdan}, \cite{drinfeld} and
section \ref{sectiondrinfeld} rely on a non-constructive functorial
description of the formal neighbourhood. In simple examples it is
possible to compute the decomposition explicitly:

\begin{Ex} (cf. \cite{drinfeld}) Let $X$ be the
hypersurface $$g=yx_{n+1} + f(x_1,\ldots, x_n)=0,$$ $f(0)=0$, and take
$\gamma_0=(0,\ldots, 0, t,0)\in X_\infty$ where the first $n+1$
components correspond to the variables $x_i$, $1\leq i\leq n+1$. Note
that $\p_y g(\gamma_0)=t$, thus $\gamma_0\not\in
(X_{\sing})_\infty$. In this example we can identify $Y$ with $\Spec
\K[x_1,\ldots, x_n]/(f)$ and $y$ with $0$. To see this, let $A$ be a
test-ring with maximal ideal $\m$. We are interested in the
$A$-deformations of $\gamma_0$, i.e., in $$\gamma=(x_1(t),\ldots,
x_{n+1}(t),y(t))\in A[[t]]^{n+2}$$ with $\gamma \mod \m =\gamma_0$. By
the Weierstrass Preparation Theorem any $A$-deformation $x_{n+1}(t)$
of $t$ can be written as $$x_{n+1}(t)=u(t-\alpha),$$ with $\alpha \in
\m$ and $u\in A[[t]]^\times$. It's easy to show the following: Given
$\alpha, u$ and $x_1(t),\ldots, x_n(t) \in \m[[t]]$ there is at most
one $y(t) \in \m[[t]]$ such that $$y(t)x_{n+1}(t) + f(x_1(t),\ldots,
x_n(t))=0.$$ Moreover, $y(t)$ exists if and only if
$f(x_1(\alpha),\ldots, x_n(\alpha))=0$ holds. Thus, any
$A$-deformation of $\gamma_0$ is determined by $\alpha \in \m$,
$(x_1(t),\ldots, x_n(t))\in \m[[t]]^n$ with the
property $$f(x_1(\alpha),\ldots, x_n(\alpha))=0$$ and $u\in
A[[t]]^\times$ (note: $(x_1(\alpha),\ldots, x_n(\alpha))$ is nothing
but an $A$-point of $\Spec \K[x]/(f)$ equal to $0$ modulo
$\m$). Conversely, any $A$-deformation $\beta$ of $0$ in $Y$ gives
rise to an $A$-deformation of $\gamma_0$ in the following way: choose
$(x_1(t),\ldots, x_n(t))\in \m[[t]]^n$ such that $$f(x_1(t),\ldots,
x_n(t))\in (t-\beta)\cdot \m[[t]].$$ This involves free choice of
infinitely many coefficients of the $x_i(t)$ which contributes to the
$D^\infty$ part in Theorem \ref{thm:GKD}. Next we take an arbitrary
$u\in \A[[t]]^\times$, once more the coefficients of $u$ contribute to
$D^\infty$. Finally, we determine $y(t)$ uniquely
by $$y(t)=u^{-1}(t-\beta)^{-1}f(x_1(t),\ldots, x_n(t)).$$
\end{Ex}

It would be desirable to be able to compute explicitly the
decomposition of the formal neighbourhood for the general
case. Moreover,  we would like to identify the finite type part $Y[y]$. What is
its geometric significance (for the arc scheme and the base scheme)?\\

(b) {\it Power series solutions to differential equations:} Describe
the geometry of power series solutions to non-linear (ordinary and
partial) differential equations by means of the rank theorem. A first
step has been carried out in section \ref{polynomialvectorfields},
where we considered explicit ordinary differential equations with
constant coefficients. Examples of more complicated cases
are: $$tx'+x^2=0$$ or $$t(x')^2 + x^2 =0.$$ In addition one could try
to regain and extend the results in \cite{denef_lipshitz}. There Denef
and Lipshitz prove among other results a version of Artin's
approximation theorem (see section \ref{sectionwavrik}) for algebraic
differential equations: any formal power series solution to an
algebraic differential equation can be approximated (in the \m-adic)
topology by a differentially algebraic power series.\\
 
(c) Implicit/Inverse function theorem for textile maps: textile maps
are a natural generalization of maps given by substitution of power
series. Theorem \ref{inversemappingthm} shows invertibility of textile
maps which are (up to a linear coordinate change) of the form $\id +
h$, where $h$ is contractive (see section
\ref{firstdefinitions}). Several lines of further investigation lie at
hand: first, one could try to weaken the contractivity assumption,
probably adding some additional structure to the coefficient ring
$\K$. In the case of the formal arc theorem (section
\ref{sectiondrinfeld}) we could weaken the assumptions on $h$ to
having contraction degree $\kappa(h)\geq 0$ as soon as $\K$ was
replaced by a test-ring (which especially implied completeness
w.r.t. to the topology defined by the powers of its maximal
ideal). Second, if we assume that $\K$ is a complete valued field, we
might ask for properties of $h$ ensuring convergence of the image of a
convergent cord: we call a cord convergent if its coefficients
are the coefficients of a convergent power series. Under which
conditions is $h(c)$ convergent if $c$ is? In the special case
of tactile maps over $\C$ a positive answer can be found in \cite{HM},
Thm. 6.2. As a variation, one could rise the same question for
``algebraic cords'' $c$. A cord is called {\it algebraic} if its
coefficients match with the coefficients of an algebraic power
series. These last generalizations constitute an important step
towards an approximation theorem for textile maps (see below). In view
of this, an adapted version of Tougeron's implicit function theorem for
textile maps is another goal to strive for.\\

(d) Artin Approximation: the approximation theorem
(Thm. \ref{thm:artin_wavrik}) shows that any sufficiently good
approximate power series solution to a formally analytic system of equations can be
approximated (in the \m-adic topology) by an actual solution. Moreover, in
case of polynomial systems of equations, any power series solution can
be approximated by algebraic solutions (see \cite{artin2}). Both
results are local in nature, since they focus on the structure of the
solution set locally (in the \m-adic topology) at an approximate/formal solution. How does the
geometry of the set of power series solutions look like in general? The
Denef-Loser local triviality result (Theorem
\ref{localtrivialitytheorem}) can be seen as an answer for the case
of power series solutions in one variable to a polynomial system with
constant coefficients.\\ Furthermore, viewing power series as cords,
it is natural to ask for an approximation result for textile maps: Let
$c$ be an approximative solution to the textile equation $f=0$. Is it
possible to approximate $c$ by an actual solution? Or, under which conditions
can solutions of a textile map be approximated by convergent (or
algebraic) ones? In contrast to the case of tactile equations (as they
appear in Artin's result), the answer will be negative in general. So
the question is, under which conditions do such approximation results
hold?

\bigskip

\bibliographystyle{alpha}
\bibliography{literatur}

\end{document}